\documentclass[twoside,11pt]{amsart}
\usepackage[all]{xy}
\CompileMatrices
\usepackage{amsfonts}
\usepackage{amssymb}
\usepackage{amsthm}
\usepackage{amsmath}
\usepackage{ifthen}
\usepackage{enumitem} 
\usepackage{verbatim} 
\usepackage{url}
\usepackage[usenames]{color}

\vfuzz2pt 
\hfuzz2pt 
\setlength{\textwidth}{14,5cm} \setlength{\textheight}{21,5cm}
\setlength{\marginparsep}{0.25cm} \setlength{\parindent}{0cm}
\setlength{\parskip}{1ex} \setlength{\oddsidemargin}{2.5mm}
\setlength{\evensidemargin}{\oddsidemargin}
\addtolength{\evensidemargin}{1cm} \setcounter{tocdepth}{3}

\allowdisplaybreaks 

\hyphenation{inclu-ding}

 \newtheorem{thm}{Theorem}[section]
 \newtheorem{prop}[thm]{Proposition}
 \newtheorem{cor}[thm]{Corollary}
 \newtheorem{lem}[thm]{Lemma}

\theoremstyle{definition}
\newtheorem{defn}[thm]{Definition}

\theoremstyle{remark}
\newtheorem{rem}[thm]{Remark}


\newcommand{\Z}{\mathbb{Z}}
\newcommand{\Q}{\mathbb{Q}}
\newcommand{\R}{\mathbb{R}}
\newcommand{\C}{\mathbb{C}}
\renewcommand{\Im}{\mathrm{Im}}
\newcommand{\A}{\mathbb{A}}
\newcommand{\HH}{\mathbb{H}}
\renewcommand{\H}{\mathcal{H}}
\newcommand{\Sp}{\mathrm{Sp}}
\newcommand{\GL}{\mathrm{GL}}
\newcommand{\Jac}{\b{G}}
\renewcommand{\Re}{\mathrm{Re}}
\renewcommand{\Im}{\mathrm{Im}}

\newcommand{\diag}{\mathrm{diag}}
\newcommand{\sgn}{\mathrm{sgn}}

\newcommand{\tr}{\mathrm{tr}\,}
\renewcommand{\b}[1]{\boldsymbol{#1}} 
\renewcommand{\a}{\mathbf{a}} 
\newcommand{\h}{\mathbf{h}} 
\newcommand{\f}[1]{\mathfrak{{#1}}}

\renewcommand{\l}{\lambda}
\renewcommand{\t}{\theta}

\newcommand{\lag}{\l_\Gamma}
\newcommand{\lagp}{\l_{\Gamma'}}

\newcommand{\back}{\backslash}
\newcommand{\T}[1]{\,{}^t\! {{#1}}} 
\newcommand{\transpose}[1]{\text{$^t\!#1$}} 
\renewcommand{\(}{\left(} \renewcommand{\)}{\right)}

\pagestyle{myheadings}

\def\sectionnam{\@empty}
\def\subsectionnam{\@empty}


\begin{document}
\title[Algebraicity of special $L$-values]{Algebraicity of special $L$-values attached to Siegel-Jacobi modular forms}

\keywords{Jacobi group, Siegel-Jacobi modular forms, $L$-functions, Poincar\'e series, Eisenstein series, Hecke operators}

\author{Thanasis Bouganis and Jolanta Marzec}
\address{Department of Mathematical Sciences\\ Durham University\\
 Durham, UK. \newline
 Institute of Mathematics \\
 TU Darmstadt \\
 Darmstadt, Germany}
\email{athanasios.bouganis@durham.ac.uk\\ marzec@mathematik.tu-darmstadt.de}
\thanks{The authors acknowledge support from EPSRC through the grant EP/N009266/1, Arithmetic of automorphic forms and special $L$-values}
\subjclass[2010]{11R42, 11F50, 11F66, 11F67 (primary), and 11F46 (secondary)}

\maketitle In this work we obtain algebraicity results on special $L$-values attached to Siegel-Jacobi modular forms.  
Our method relies on a generalization of the doubling method to the Jacobi group obtained in our previous work, and on introducing a notion of near holomorphy for Siegel-Jacobi modular forms. Some of our results involve also holomorphic projection, which we obtain by using Siegel-Jacobi Poincar\'e series of exponential type.



\section{Introduction} 
\definecolor{paleyellow}{RGB}{255,255,190}
\newcommand{\yel}[1]{\colorbox{paleyellow}{#1}}

This paper should be seen as a continuation of our earlier paper \cite{BMana} on properties of the standard $L$-function attached to a Siegel-Jacobi modular form. Indeed, in \cite{BMana} we have established various analytic properties (Euler product decomposition, analytic continuation and detection of poles) of the standard $L$-function attached to Siegel-Jacobi modular forms, and in this paper we turn our attention to algebraicity properties of some special $L$-values. 

Shintani was the first person who attached
an $L$-function to a Siegel-Jacobi modular form which is an eigenfunction of a properly defined Hecke algebra. He initiated the study of its analytic properties by finding an integral representation. His work was left unpublished, but then was took over by Murase \cite{Mu89, Mu91} and Arakawa \cite{Ar94} who obtained results on the analytic properties of this $L$-function using variants of the doubling method. In our previous work \cite{BMana} we extended their results to a very general setting: non-trivial level, character and a totally real algebraic number field. For this purpose we applied the doubling method to the Jacobi group, and consequently related Siegel-type Jacobi Eisenstein series to the standard $L$-function. This identity has a further application in the current paper.

Here the starting point of our investigation 
is a result of Shimura in \cite{Sh78} on the arithmeticity of Siegel-Jacobi modular forms. Namely, if we let $S$ be a positive definite half-integral $l$ by $l$ symmetric matrix, write $M_{k,S}^n$ for the space of Siegel-Jacobi modular forms of weight $k$ and index $S$ (see next section for a definition), and of any congruence subgroup, and denote by $M_{k,S}^n(K)$ the subspace of  $M_{k,S}^n$ consisting of those functions whose Fourier expansion at infinity has Fourier coefficients
in a subfield $K$ of $\mathbb{C}$, then it is shown in  (loc. cit.) that  $M_{k,S}^n(K)= M_{k,S}^n(\mathbb{Q}) \otimes_{\mathbb{Q}} K$. In particular, for a given $f \in M_{k,S}^n$ and a $\sigma \in Aut(\mathbb{C}/\mathbb{Q})$ one can define the element $f^{\sigma} \in M_{k,S}^n$ by letting $\sigma$ act on the Fourier coefficients of $f$. 

The main result of this paper comes in two flavours: Theorem \ref{Main Theorem on algebraicity} and Theorem \ref{Main Theorem on algebraicity v2}. Without going into too much details, it may be vaguely stated as follows. Denote by $L(s,f,\chi)$ the standard $L$-function attached to a Siegel-Jacobi cuspidal eigenform $f \in M_{k,S}^n(\overline{\mathbb{Q}})$, which is twisted by a Dirichlet character $\chi$, and let 
\[
\b{\Lambda}(s,f,\chi) := L(2s-n-l/2,f,\chi) \begin{cases} L_{\mathfrak{c}}(2s-l/2,\chi \psi_S) & \hbox{if } l \in 2\mathbb{Z},\\
1 & \hbox{if } l \notin 2\mathbb{Z},
 \end{cases}
\]
where $\psi_S$ is the non-trivial quadratic character attached to the extension $K_S:= \mathbb{Q}(\sqrt{(-1)^{l/2} \det(2S)})$ if $K_S \neq \mathbb{Q}$, and otherwise $\psi_S =1$. 
Then for certain integers $\sigma$ and for $k \gg 0$,
\[
\frac{\b{\Lambda}(\sigma/2,f,\chi)}{\pi^{e_{\sigma}} <f,f>} \in \overline{\mathbb{Q}},
\]
for an explicit power $e_{\sigma} \in \mathbb{N}$, and where $<f,f>$ is a Petersson inner product on the space of cuspidal Siegel-Jacobi modular forms.

In Theorem \ref{Main Theorem on algebraicity}, where the cusp form $f$ is defined over a totally real field $F$, we can take above any $k>2n+l+1$, but we have to assume that in the theta decomposition of $f$ the modular forms of (half-)integral weight are all cuspidal, which we call Property A in this paper. It is very tempting to say that this last assumption always holds (see examples and remark on pages \pageref{ex:property A}-\pageref{rem:property A}), but nevertheless it may not be so easy to check in practice. Theorem \ref{Main Theorem on algebraicity v2} seeks to improve this situation, and does not rely on Property A. This second theorem is obtained under a slightly weaker bound on the weight, namely $k>6n+2l+1$, and for simplicity we prove it only for $F=\mathbb{Q}$. 

We would like to emphasize that these two results come with very different methods, although they both base on the doubling identity. The key difficulty which forces the aforementioned assumptions is the existence of a projection from nearly holomorphic to cuspidal Siegel-Jacobi forms. In the first case this is resolved by employing the known projection from (nearly) holomorphic Siegel modular forms (see Sections \ref{sec:arithmetic} and \ref{sec:nhol}). In the second case, modelled on the work \cite{St} of Sturm, we study Siegel-Jacobi Poincar\'e series of exponential type and define the projection via the associated reproducing kernel (Section \ref{sec:Sturm}); this might be of independent interest. The latter approach involves also a twist in the proof, which uses the theory of CM points in the Jacobi setting.

Let us now try to put the main result of this paper in some broader context. Results  of the above form for the standard $L$-functions of automorphic forms associated to Shimura varieties, such as Siegel and Hermitian modular forms, were obtained by many researchers, most profoundly by Shimura (see for example \cite{Sh00}). These results can also be understood in the general framework of Deligne's Period Conjectures for critical values of motives \cite{Deligne}. Indeed, according to the general Langlands conjectures, the standard $L$-functions of automorphic forms related to Shimura varieties can be identified with motivic $L$-functions, and hence the algebraicity results for the special values of the automorphic $L$-functions can also be seen as a confirmation of Deligne's Period Conjecture, albeit is usually hard to actually show that the conjectural motivic period agrees with the automorphic one.

Siegel-Jacobi modular forms and - in particular - the algebraicity results obtained in this paper do not fit in this framework: since the Jacobi group is not reductive, it does not satisfy the necessary properties to be associated with a Shimura variety. Nevertheless, the Jacobi group can be actually associated with a geometric object, namely with a mixed Shimura variety, as it is explained for example in \cite{Kramer,Milne}. It is thus very tempting to speculate - although we do not address this question in this paper - that the standard $L$-function studied here might be identified with an $L$-function of a mixed motive, and hence the theorem above could be seen as a confirmation of the generalization of Deligne's Period Conjecture to the mixed setting as for example stated by Scholl in \cite{Scholl}. 

We should mention here that even though in some cases one can identify the standard $L$-function associated to a Siegel-Jacobi form with the standard $L$-function associated to a Siegel modular form (see for example the remark on page 252 in \cite{Mu91}), this is possible under some quite restrictive conditions on both index and level of the Siegel-Jacobi form. Actually, even in the situation of classical Jacobi forms this correspondence becomes quite complicated when one considers an index different than $1$ and/or non-trivial level, which is very clear for example in the work of \cite{SZ}. 

\noindent \textbf{Remark:} In an earlier version of \cite{BMana}, which one can find on the arXiv (\cite{BMarXiv}), we had also included the results of this paper (except the last two sections). However this had resulted in a rather long exposition, and for this reason we decided to keep the two main results of our investigations separately. Namely, \cite{BMana} contains now our results towards the analytic properties of the standard $L$ function, whereas this paper focuses on the algebraic properties.


\section{Preliminaries}
In this section we recall basic facts regarding Siegel-Jacobi modular forms of higher index and set up the notation. We follow closely our previous work \cite{BMana}.

Let $F$ be a totally real algebraic number field of degree $d$, $\f{d}$ the different of $F$, and $\f{o}$ its ring of integers. For two natural numbers $l,n$, we consider the Jacobi group $\b{G}:=\b{G}^{n,l}:=H^{n,l}\rtimes \Sp_n$ of degree $n$ and index $l$ over $F$:
$$\b{G}^{n,l}(F):=\{\b{g}=(\l,\mu,\kappa)g: \l,\mu\in M_{l,n}(F), \kappa\in Sym_l(F), g\in G^n(F)\} ,$$
where $H(F):=H^{n,l}(F):=\{ (\l,\mu,\kappa) 1_{2n}\in\b{G}^{n,l}(F)\}$ is the Heisenberg group, and  
$$G^n(F):=\Sp_n(F):=\left\{ g\in \mathrm{SL}_{2n}(F)\colon\T{g}\(\begin{smallmatrix}  &  -1_n   \\  1_n  & \\ \end{smallmatrix}\) g=\(\begin{smallmatrix}  &  -1_n   \\  1_n  &    \\ \end{smallmatrix}\)\right\} .$$
The group law is given by
$$(\l,\mu,\kappa)g (\l',\mu',\kappa')g':=(\l+\tilde{\l},\mu+\tilde{\mu}, \kappa+\kappa'+\l\T{\tilde{\mu}}+\tilde{\mu}\T{\l}+\tilde{\l}\T{\tilde{\mu}}-\l'\T{\mu'}) gg',$$
where $(\tilde{\l}\,\tilde{\mu}):=(\l'\,\mu')g^{-1}=(\l'\T{d}-\mu'\T{c}\quad \mu'\T{a}-\l'\T{b})$,
and the identity element of $\b{G}^{n,l}(F)$ is $1_H1_{2n}$, where $1_H:=(0,0,0)$ is the identity element of $H^{n,l}(F)$ (whenever it does not lead to any confusion we suppress the indices $n,l$).
For an element $g \in \Sp_n$ we write $g = \left( \begin{smallmatrix} a_g & b_g \\ c_g & d_g \end{smallmatrix}\right)$, where $a_g,b_g,c_g,d_g \in M_n$.

We write  $\{\sigma_v:F\hookrightarrow\R,\,\, v \in \mathbf{a}\}$ for the set of real embeddings of $F$ and put $F_\a:=\prod_{v\in\a} \sigma_v (F)$, $\a$ denoting the set of archimedean places of $F$. Each $\sigma_v$ induces an embedding $\b{G}(F) \hookrightarrow \b{G}(\mathbb{R})$; we will write $(\l_v,\mu_v,\kappa_v)g_v$ for $\sigma_v(\b{g})$. 
The group $\Jac(\mathbb{R})^{\mathbf{a}}:= \prod_{v \in \mathbf{a}} \b{G}(\mathbb{R})$ acts on $\H_{n,l}:=(\mathbb{H}_n\times M_{l,n}(\C))^{\a}$ component wise via
\[
\b{g} z  = \b{g}(\tau,w)=(\l,\mu,\kappa)g (\tau,w)=\prod_{v\in\a} (g_v\tau_v, w_v \lambda(g_v,\tau_v)^{-1}+\l_v g_v \tau_v+\mu_v),
\]
where $g_v\tau_v=(a_v\tau_v+b_v)(c_v\tau_v+d_v)^{-1}$ and $\lambda(g_v,\tau_v) :=(c_v\tau_v+d_v)$ for $g_v=\(\begin{smallmatrix} a_v & b_v \\ c_v & d_v \end{smallmatrix}\)$.

For $k = (k_v) \in \Z^{\a} := \prod_{v \in \mathbf{a}} \mathbb{Z}$ and a matrix $S\in Sym_l(\f{d}^{-1})$ we define the factor of automorphy of weight $k$ and index $S$ by
$$J_{k,S}\colon\Jac^{n,l}(F)\times\H_{n,l}\to \C$$
$$J_{k,S} (\b{g},z)=J_{k,S} (\b{g},(\tau,w)):=\prod_{v\in\a} j(g_v,\tau_v)^{k_v} \mathcal{J}_{S_v}(\b{g_v},\tau_v,w_v) ,$$
where $\b{g}=(\l,\mu,\kappa)g$, $j(g_v,\tau_v)=\det(c_v\tau_v+d_v)=\det(\lambda (g_v,\tau_v))$ and
\begin{multline*}
\mathcal{J}_{S_v}(\b{g}_v,\tau_v,w_v)=e(-\tr (S_v\kappa_v)+\tr (S_v[w_v] \lambda(g_v,\tau_v)^{-1}c_v)\\
-2\tr (\T{\l}_v S_v w_v \lambda(g_v,\tau_v)^{-1})-\tr (S_v[\l_v] g_v \tau_v))
\end{multline*}
with $e(x):=e^{2\pi ix}$, and we set $S[x]:=\T{x}Sx$; $J_{k,S}$ satisfies the usual cocycle relation: 
$$
J_{k,S} (\b{g}\b{g}',z)=J_{k,S} (\b{g},\b{g}'\,z)J_{k,S} (\b{g}',z).
$$
For a function $f \colon\H_{n,l}\to\C$ we define the action of $\b{g}\in\b{G}^{n,l}$ by
$$(f|_{k,S}\, \b{g})(z):=J_{k,S} (\b{g},z)^{-1}f(\b{g}\,z).$$

One can define a very general notion of a congruence subgroup of $\b{G}(F)$ by considering any congruence subgroup $\Gamma$ of $Sp_n(F)$ and a lattice in $H^{n,l}(F)$ that is stable under the action of $\Gamma$. However
in this paper we will mainly focus on functions $f$ which are invariant under the action of a particular congruence subgroup of $\b{G}(F)$, namely, 
$$\b{\Gamma}[\mathfrak{b},\mathfrak{c}]:=C[\mathfrak{o},\mathfrak{b}^{-1},\mathfrak{b}^{-1}] \rtimes \Gamma[\mathfrak{b}^{-1},\mathfrak{bc}]\subset\b{G}(F),$$
where $\f{b}$ and $\f{c}$ are respectively a fractional and an integral ideal of $F$, and
$$C[\mathfrak{o},\mathfrak{b}^{-1},\mathfrak{b}^{-1}] := \{ (\lambda,\mu,\kappa) \in H^{n,l}(F):\lambda\in M_{l,n}(\mathfrak{o}), \mu\in M_{l,n}(\mathfrak{b}^{-1}), \kappa\in Sym_l( \mathfrak{b}^{-1})\} ,$$
$$\Gamma[\mathfrak{b}^{-1},\mathfrak{bc}]:=\left\{ g = \left(\begin{matrix} a_g & b_g  \\ c_g & d_g \end{matrix} \right) \in G(F):\begin{smallmatrix} a_g\in M_n(\f{o}), & b_g\in M_n(\mathfrak{b}^{-1}),\\ c_g\in M_n(\mathfrak{b}\mathfrak{c}), & d_g\in M_n(\f{o})\end{smallmatrix}\right\} .$$

Now, let $S \in \mathfrak{b}\mathfrak{d}^{-1}\mathcal{T}_l$ where 
$$\mathcal{T}_l:= \{ x \in Sym_l(F):\tr(xy)\in\mathfrak{o} \mbox{ for all } y \in Sym_l(\mathfrak{o}) \} ,$$
and assume additionally that $S$ is positive definite in the sense that if we write $S_v := \sigma_v(S) \in Sym_{l}(\mathbb{R})$ for $v \in \mathbf{a}$, then all $S_v$ are positive definite. 

\begin{defn}\label{definition modular Jacobi}
Let $k$ and $S$ be as above, and $\b{\Gamma}$ a subgroup of $\b{\Gamma}[\mathfrak{b},\mathfrak{c}]$ (for some ideals $\f{b},\f{c}$) equipped with a homorphism $\chi$. 
A Siegel-Jacobi modular form of weight $k\in\Z^{\a}$, index $S$, level $\b{\Gamma}$ and Nebentypus $\chi$ is a holomorphic function $f\colon\H_{n,l}\to\C$ such that
	\begin{enumerate}
	\item[i)] \label{def:Phi} $f|_{k,S}\, \b{g} =\chi(\b{g})f$ for every $\b{g} \in \b{\Gamma}$;
	\item[ii)] for each $g\in G^n(F)$, $f|_{k,S}\, g$ admits a Fourier expansion 
of the form
\[
f|_{k,S}\, g(\tau,w)=\sum_{\substack{t\in L\\ t\geq 0}}\sum_{r\in M} c(g;t,r) \mathbf{e}_{\mathbf{a}}(\tr(t\tau)) \mathbf{e}_{\mathbf{a}}(\tr(\T{r}w))\,\,\,\,(*)
\]
for some appropriate lattices $L \subset Sym_n(F)$ and $M \subset M_{l,n}(F)$, and such that $\begin{pmatrix} S & r \\ \transpose{r} & t \end{pmatrix}\geq 0$, i.e., this is a semi-positive definite matrix for each $v\in\a$; we set $\mathbf{e}_{\mathbf{a}}(x):=\prod_{v\in\a} e(x_v)$ for $x=\prod_{v\in\a} x_v$.\\ 
\end{enumerate}
We will denote the space of such functions by $M^n_{k,S}(\b{\Gamma} ,\chi)$.	
\end{defn}

\begin{rem} Thanks to the K\"{o}cher principle for Siegel-Jacobi forms the second condition is automatic if $n \geq 2$ or $F\neq \mathbb{Q}$ and hence it needs to be imposed only if $n=1$ and $F = \mathbb{Q}$. Indeed, this is shown in \cite[Lemma 1.6]{Z89} or \cite[Proposition 3.1]{Sh78} for the case of $n \geq 2$ and $F=\mathbb{Q}$ but it is very clear that its proof extends to the case of $F \neq \mathbb{Q}$ since it only relies to the K\"ocher principle for Siegel modular forms which is true also for totally real fields (see for example \cite[page 31]{Sh00}. For the case of $n=1$ and $F$ a totally real field different to $\mathbb{Q}$ one can use the same proof but now use the K\"ocher principle for Hilbert modular forms (see also \cite[Lemma 3.50]{Boylan}).
\end{rem}

We say that $f$ is a cusp form if in the expansion $(*)$ above for every $g \in G^n(F)$, we have $c(g;t,r) = 0$ unless $\begin{pmatrix} S_v & r_v \\ \transpose{r_v} & t_v \end{pmatrix}$ is positive definite for every $v \in \mathbf{a}$. The space of cusp forms will be denoted by $S^n_{k,S}(\b{\Gamma},\chi)$.

We define Petersson inner product of Siegel-Jacobi forms $f$ and $g$ of weight $k$ and level $\b{\Gamma}$ under assumption that one of them is a cusp form as:
\[
<f,g>:= vol(A)^{-1} \int_{A} f(z) \overline{g(z)} \Delta_{S,k}(z) dz, \,\,\,\, A:= \b{\Gamma} \setminus \mathcal{H}_{n,l} ,
\]
where for $z = (\tau,w) \in \mathcal{H}_{n,l}$, $\tau = x + i y$ with $x,y \in Sym_n(F_{\a})$ and $w = u + i v$ with $u,v \in M_{l,n}(F_{\a})$, we set
$$dz := d(\tau,w):= \det(y)^{-(l+n+1)}dx dy du dv\, ,\quad\Delta_{S,k}(z):= \det(y)^k \exp_\a(-4 \pi \tr(S[v] y^{-1})).$$ 
We also recall (see \cite[p. 187]{Ar94}) that $\Delta_{S,k}(\b{g}z)=|J_{k,S}(\b{g},z)|^{-2}\Delta_{S,k}(z)$ for any $\b{g}\in\b{G}^{n,l}(F)$.
In this way the inner product is independent of the group $\b{\Gamma}$. 

We finish this section with a final remark. In this paper we consider the $L$ function (see next section) attached to integral weight Siegel-Jacobi form i.e. $k = (k_v)$ with $k_v \in \mathbb{Z}$. However some half-integral weight Siegel-Jacobi forms will arise quite naturally in the paper, and in particular theta series. That is we will have to consider weights of the form $k = (k_v)$ with all $k_v \in \frac{1}{2}\mathbb{Z} \setminus \mathbb{Z}$. In this case the definition is similar to the one given above (i.e. Definition \ref{definition modular Jacobi}, but one replaces $j(g_v,\tau_v)^{k_v}$ in the definition of $J_{k,S}(\b{g},z)$ above, with the corresponding half-integral one as for example is defined in \cite[paragraph 6.10]{Sh00}.

\section{The standard $L$-function and the doubling method identity}\label{sec:L-function}
In this section we recall some results and notation from \cite{BMana} which will be necessary to establish results in the next section.

\subsection{The $L$-function}
We start by fixing some notation. For a fractional ideal $\f{b}$, and an integral ideal $\f{c}$ we let
$$\b{\Gamma} := \b{\Gamma}_1(\mathfrak{c}):=\{ (\l,\mu,\kappa)g\in C[\f{o}, \f{b}^{-1}, \f{b}^{-1}]\rtimes \Gamma[\f{b}^{-1}\f{c},\f{bc}]:a_g-1_n\in M_n(\f{c})\} ,$$

For any integral ideal $\mathfrak{a}$ of $F$ we have defined in \cite[Section 7]{BMana} a Hecke operator $T(\mathfrak{a}) : S^n_{k,S}(\b{\Gamma})\rightarrow S^n_{k,S}(\b{\Gamma})$.
We now consider a nonzero $f\in S^n_{k,S}(\b{\Gamma})$ such that $f|T(\f{a}) = \lambda(\f{a}) f$ for all integral ideals $\f{a}$ of $F$. For a Hecke character $\chi$ of $F$, and denoting by $\chi^*$ the corresponding ideal character, we define an absolutely convergent series
\[
D(s,f,\chi) := \sum_{\f{a}} \lambda(\f{a}) \chi^{*}(\f{a}) N(\f{a})^{-s},\qquad\Re(s) > 2n+l+1.
\]
In \cite{BMana} we proved a theorem regarding the Euler product representation of this Dirichlet series. The theorem is subject to a condition on the matrix $S$, which first appeared in \cite[page 142]{Mu89}, and may be stated as follows. Consider any prime ideal $\f{p}$ of $F$ such that $(\f{p},\f{c}) =1$ and write $v$ for the corresponding finite place of $F$. We say that the lattice $L := \mathfrak{o}_v^{l} \subset F_{v}^l$ is an $\f{o}_{v}$-maximal lattice with respect to a symmetric matrix $2S$ if for every $\mathfrak{o}_{v}$ lattice $M$ of $F_{v}^l$ that contains $L$ and satisfies $S[x] \in \f{o}_{v}$ for all $x \in M$, we have $M=L$. For any uniformiser $\pi$ of $F_{v}$ we now set 
\[
L' := \{ x \in (2S)^{-1} L: \pi S[x] \in \f{o}_{v}\} \subset F_{v}^l.
\]
We say that the matrix $S$ satisfies the condition $M_{\f{p}}^+$ if $L$ is an $\f{o}_{v}$-maximal lattice with respect to the symmetric matrix $2S$ and $L=L'$.

\begin{thm}[Theorem 7.1, \cite{BMana}]\label{Euler Product Representation} Let $0 \neq f\in S^n_{k,S}(\b{\Gamma})$ be such that $f|T(\f{a}) = \lambda(\f{a}) f$ for all integral ideals $\f{a}$ of $F$.  Assume that the matrix $S$ satisfies the condition $M_{\f{p}}^+$ for every prime ideal $\f{p}$ with $(\f{p},\f{c})=1$. Then
\[
\f{L}(\chi,s) D(s+n+l/2,f,\chi) = L(s,f,\chi) := \prod_{\f{p}} L_{\f{p}}(\chi^{*}(\f{p})N(\f{p})^{-s})^{-1},
\]
where for every prime ideal $\f{p}$ of $F$  
\[
 L_{\f{p}}(X) = \begin{cases} \prod_{i=1}^n \left((1- \mu_{\f{p},i}X)(1-\mu^{-1}_{\f{p},i}X ) \right)\! ,\,\,\, \mu_{\f{p},i} \in \mathbb{C}^{\times} & \hbox{if } (\f{p}, \mathfrak{c})=1,\\
 1 & \hbox{if } (\f{p}, \mathfrak{c})\neq 1.
 \end{cases}
\]
Moreover, $\f{L}(\chi,s) = \prod_{(\f{p},\f{c})=1} \f{L}_{\f{p}}(\chi,s)$, where
\[
\f{L}_{\f{p}}(\chi,s) := G_{\f{p}}(\chi,s) \cdot  \begin{cases}  \prod_{i=1}^{n} L_{\f{p}}(2s+2n-2i,\chi^{2}) &\mbox{if } l \in 2\mathbb{Z}\\ 
\prod_{i=1}^{n} L_{\f{p}}(2s+2n-2i+1,\chi^{2})  & \mbox{if }  l \not \in 2\mathbb{Z}.\end{cases}
\]
Here, for a Hecke Character $\psi$, we let $L_{\f{p}}(s, \psi) := 1-\psi^*(\f{p}) N(\f{p})^{-s}$, and $G_{\f{p}}(\chi,s)$ is a ratio of Euler factors which for almost all $\f{p}$ is equal to one. In particular, the function  $L(s,f,\chi)$ is absolutely convergent for $\Re(s) > n + l/2 +1$.
\end{thm}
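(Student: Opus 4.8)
The plan is to reduce the global identity to a purely local computation at each prime $\f{p}$ and then to carry out that local computation by means of the Satake parametrization of the Hecke eigenvalues.

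First I would show that $D(s,f,\chi)$ factors as an Euler product. Because $f$ is a simultaneous eigenform for all the operators $T(\f{a})$, and these are assembled from the double cosets $\b{\Gamma}\diag[\T{r}^{-1},r]\b{\Gamma}$ whose local Hecke algebras are commutative, the eigenvalues are multiplicative on coprime ideals, $\lambda(\f{a}\f{b})=\lambda(\f{a})\lambda(\f{b})$ for $(\f{a},\f{b})=1$. Hence
\[
D(s,f,\chi)=\prod_{\f{p}}\Bigl(\sum_{m\ge 0}\lambda(\f{p}^{m})\,\chi^{*}(\f{p})^{m}N(\f{p})^{-ms}\Bigr),
\]
and the whole theorem is the determination of each local Euler factor. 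For $\f{p}\mid\f{c}$ the defining matrices $r$ satisfy $r\equiv 1_{n}\bmod\f{c}$, whence $\det r$ is prime to $\f{c}$ and $\lambda(\f{p}^{m})$ vanishes for $m\ge 1$; the local factor is then $1$, matching $L_{\f{p}}(X)=1$, so all the substance lies at the primes with $(\f{p},\f{c})=1$.

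At such a good prime I would use the maximality hypothesis $M_{\f{p}}^{+}$ on $S$. Its purpose is to guarantee that the local component at $\f{p}$ of the associated Weil--Jacobi datum is unramified, so that the relevant local Hecke algebra is commutative and its characters are described by a family of Satake parameters $\mu_{\f{p},1},\dots,\mu_{\f{p},n}$. Through the Satake isomorphism for the $\GL_{n}$-type algebra generated by the cosets of $\diag[\T{r}^{-1},r]$, the eigenvalue $\lambda(\f{p}^{m})$ becomes an explicit symmetric expression in the $\mu_{\f{p},i}$ and the residue norm $N(\f{p})$, which is precisely what introduces the parameters appearing in $L_{\f{p}}(X)$.

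The technical heart, which I also expect to be the main obstacle, is the explicit local generating-function identity
\[
\Bigl(\sum_{m\ge 0}\lambda(\f{p}^{m})\,X_{0}^{m}\Bigr)\cdot\f{L}_{\f{p}}(\chi,s)=\prod_{i=1}^{n}\frac{1}{(1-\mu_{\f{p},i}X)(1-\mu_{\f{p},i}^{-1}X)},
\]
where $X=\chi^{*}(\f{p})N(\f{p})^{-s}$ and $X_{0}$ is the value of $X$ after the shift $s\mapsto s+n+l/2$. This is an Andrianov--Shimura-type computation: one writes the Hecke series as a rational function of the Satake parameters and checks that the auxiliary factors collected in $\f{L}_{\f{p}}$, namely $\prod_{i}L_{\f{p}}(2s+2n-2i,\chi^{2})$ when $l\in2\Z$ and its odd-shifted analogue when $l\notin2\Z$, cancel exactly the spurious denominators and leave the clean degree-$2n$ standard factor. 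The parity dichotomy in $l$ reflects whether the theta/Weil part of the form carries an integral or a half-integral weight, which shifts the arguments of the auxiliary Hecke $L$-factors accordingly, while the Euler-factor ratio $G_{\f{p}}(\chi,s)$, trivial for almost all $\f{p}$, absorbs the finitely many local discrepancies. The delicate point throughout is the full Satake-transform computation for the Jacobi Hecke algebra and the combinatorial cancellation it entails; this is exactly the place where $M_{\f{p}}^{+}$ must be invoked to fix the shape of the local factor. Taking the product over all $\f{p}$ and combining with the trivial bad-prime contribution yields the global identity, and the stated absolute convergence for $\Re(s)>n+l/2+1$ then follows from the convergence of $D$ in the shifted variable together with that of the auxiliary $L$-factors.
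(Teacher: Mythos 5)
You should note at the outset that this paper contains no proof of the statement at all: it is imported verbatim (labelled ``Theorem 7.1, [BMana]'') from the authors' earlier paper, where the proof occupies Section 7 and rests on Murase's local theory of Hecke algebras for the Jacobi group. Your skeleton does coincide with the route taken there: the eigenvalues $\lambda(\f{a})$ are multiplicative on coprime ideals; the operators $T(\f{a})$ are supported on ideals prime to $\f{c}$ (since $r\equiv 1_n \bmod \f{c}$ forces $\det(r)$ prime to $\f{c}$, so $\lambda(\f{p}^m)=0$ for $\f{p}\mid\f{c}$, $m\geq 1$); hence $D$ factors over the good primes, and at each good prime the theorem is a statement about the local Hecke algebra of the Jacobi group, parametrized under $M_{\f{p}}^+$ by Satake-type parameters $\mu_{\f{p},1},\dots,\mu_{\f{p},n}$. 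The bad-prime reduction and the convergence argument at the end are also correct.

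Nevertheless the proposal has a genuine gap, and it sits exactly where you flag ``the technical heart''. Two concrete points. First, the structural input is not the generic Satake isomorphism for a ``$\GL_n$-type algebra generated by the cosets of $\diag[\T{r}^{-1},r]$'': what is actually needed (and what [BMana] takes from Murase) is that under $M_{\f{p}}^+$ the local Jacobi--Hecke algebra is identified with the spherical Hecke algebra of $\Sp_n(F_{\f{p}})$ when $l$ is even, and of its metaplectic cover when $l$ is odd. The Weyl group here is of type $C_n$, which is why the parameters occur in pairs $\mu_{\f{p},i},\mu_{\f{p},i}^{-1}$, and the even/odd dichotomy in the shifts $2s+2n-2i$ versus $2s+2n-2i+1$ is a consequence of this precise identification, not merely of the heuristic that the theta part has integral or half-integral weight; without it the shape of the local factor is not pinned down. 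Second, and more seriously, your treatment of $G_{\f{p}}$ is circular. In the theorem $G_{\f{p}}(\chi,s)$ is a \emph{specific} ratio of Euler factors, equal to $1$ for almost all $\f{p}$, which comes out of the explicit rationality computation for the symplectic (resp. metaplectic) formal Hecke series in the style of Andrianov and Shimura. In your proposal $G_{\f{p}}$ is introduced as whatever ``absorbs the finitely many local discrepancies''; if it is allowed to be arbitrary, the displayed local generating-function identity is vacuous, since any discrepancy could be hidden in it. The substance of the theorem is precisely that this discrepancy is an Euler-factor ratio, trivial at almost every prime, and that the auxiliary factors $\prod_{i}L_{\f{p}}(2s+2n-2i,\chi^2)$ (or their odd-shifted analogues) cancel the remaining denominators exactly. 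So you have identified the correct strategy --- the one actually used in [BMana] --- but the identity on which everything rests is asserted rather than proven, and proving it \emph{is} the theorem.
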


We note here that the Euler product expression implies that 
\begin{equation}\label{eq:non-vanishing of L-values}
L(s,f,\chi) \neq 0,\qquad \Re(s) > n + l/2 +1.
\end{equation}

For the rest of the paper, whenever the $L$ function attached to a Jacobi form $f$ is considered, \textbf{we will always assume} that the index matrix $S$ of $f$ satisfies the condition $M_{\f{p}}^+$ for every prime ideal $\f{p}$ away from the level $\mathfrak{c}$ of $f$.

Let us now also remark that if we replace $f$ with $f^c(z) := \overline{f(-\overline{z})}$, the $L$-function remains the same:

\begin{prop}[Proposition 7.9, \cite{BMana}]\label{Behaviour under complex conjugation} 
Let $f\in S^n_{k,S}(\b{\Gamma})$ be an eigenform with $f| T(\f{a}) = \lambda(\f{a}) f$ for all fractional ideals $\f{a}$ prime to $\f{c}$. Then so is $f^c$. In particular, $f^c |T(\f{a}) = \lambda(\f{a}) f^c$ and $L(s,f,\chi) = L(s,f^c,\chi)$. 
\end{prop}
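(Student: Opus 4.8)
The plan is to realise the map $f \mapsto f^c$, where $f^c(z) := \overline{f(-\bar z)}$ with $z=(\tau,w)$, as a conjugate-linear involution of $S^n_{k,S}(\b{\Gamma})$ that intertwines the Hecke operators, and then to combine this with the self-adjointness of $T(\f{a})$. First I would record the effect on Fourier expansions: writing $\tau = x+iy$ and using that $\overline{\mathbf{e}_{\mathbf{a}}(\tr(t(-\bar\tau)))} = \mathbf{e}_{\mathbf{a}}(\tr(t\tau))$ for real symmetric $t$ (and similarly for the $w$-variable), one sees that the expansion $(*)$ of $f^c$ is obtained from that of $f$ by replacing each $c(g;t,r)$ with its complex conjugate (after the appropriate change of the cusp $g$). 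In particular $f^c$ is again holomorphic, its Fourier support satisfies the same semi-positivity, resp. positivity, condition, so $f^c$ is a cusp form, and its Fourier coefficients are simply the conjugates of those of $f$.

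Next I would introduce the involution of $\b{G}^{n,l}$ attached to the antiholomorphic map $z\mapsto -\bar z$ of $\H_{n,l}$. On the symplectic part it is $g\mapsto g^\iota := \eta g\eta$ with $\eta = \diag(1_n,-1_n)$, so that $g^\iota(-\bar\tau) = -\overline{g\tau}$; this extends to a group involution $\b{g}\mapsto \b{g}^\iota$ of the whole Jacobi group (with the corresponding sign changes on the Heisenberg coordinates $(\l,\mu,\kappa)$) normalised so that $\b{g}(-\bar z) = -\overline{\b{g}^\iota z}$. The two facts I need are: (a) $\b{\Gamma}^\iota = \b{\Gamma}$, which follows directly from the description of $\b{\Gamma}_1(\f{c})$ by congruence and integrality conditions that are visibly stable under $\iota$; and (b) the factor of automorphy satisfies $\overline{J_{k,S}(\b{g}, -\bar z)} = J_{k,S}(\b{g}^\iota, z)$. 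Claim (b) is checked factor by factor from the explicit formula: for the Siegel part $\overline{j(g_v,-\bar\tau_v)^{k_v}} = j(g_v^\iota,\tau_v)^{k_v}$ because $c_v,d_v$ are real and $\overline{-\bar\tau_v} = -\tau_v$, while for the Jacobi part $\mathcal{J}_{S_v}$ one uses that $S_v$ is real and that each trace occurring in the exponent is sent to its $g^\iota$-analogue under conjugation composed with the sign changes. Together (a), (b) and the action compatibility give the intertwining relation
\[
(f|_{k,S}\,\b{g})^c = f^c|_{k,S}\,\b{g}^\iota \qquad (\b{g}\in\b{G}^{n,l}(F)),
\]
and, applied to $\b{g}\in\b{\Gamma}$, the modular invariance of $f^c$; hence $f^c \in S^n_{k,S}(\b{\Gamma})$.

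I would then transport this to the Hecke operators. Recall $f|T(\f{a}) = \sum_{\b{\alpha}} f|_{k,S}\,\b{\alpha}$, where the $\b{\alpha}$ run over representatives of the cosets in $\b{\Gamma}\,\diag(\T{r}^{-1},r)\,\b{\Gamma}$ with $r$ real. Since conjugation by $\eta$ fixes every block-diagonal element $\diag(\T{r}^{-1},r)$ and $\b{\Gamma}^\iota = \b{\Gamma}$, the double coset defining $T(\f{a})$ is $\iota$-stable, so $\{\b{\alpha}^\iota\}$ is again a full set of coset representatives. Combining this with the intertwining relation gives $(f|T(\f{a}))^c = \sum_{\b{\alpha}} f^c|_{k,S}\,\b{\alpha}^\iota = f^c|T(\f{a})$. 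Because the $c$-operation is conjugate-linear, $(f|T(\f{a}))^c = (\lambda(\f{a})f)^c = \overline{\lambda(\f{a})}\,f^c$, so $f^c$ is an eigenform with eigenvalue $\overline{\lambda(\f{a})}$.

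Finally, to replace $\overline{\lambda(\f{a})}$ by $\lambda(\f{a})$ I would invoke the self-adjointness of $T(\f{a})$ with respect to the Petersson product for $(\f{a},\f{c})=1$ (established for these operators in \cite{BMana}): from $\langle f|T(\f{a}),f\rangle = \langle f, f|T(\f{a})\rangle$ one gets $\lambda(\f{a})\langle f,f\rangle = \overline{\lambda(\f{a})}\langle f,f\rangle$ and hence $\lambda(\f{a})\in\R$. Then $f^c|T(\f{a}) = \lambda(\f{a})f^c$, and since $D(s,f^c,\chi) = \sum_{\f{a}}\lambda(\f{a})\chi^*(\f{a})N(\f{a})^{-s} = D(s,f,\chi)$ while the normalising factor $\f{L}(\chi,s)$ in Theorem \ref{Euler Product Representation} depends only on $\chi$, $n$ and $l$, we conclude $L(s,f^c,\chi) = L(s,f,\chi)$. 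The main obstacle I anticipate is the careful verification of the factor-of-automorphy identity (b) for the Jacobi part $\mathcal{J}_{S_v}$ — tracking the sign changes on $(\l,\mu,\kappa)$ so that the several terms quadratic in $S_v$ in the exponent transform correctly — together with pinning down the precise Heisenberg component of $\iota$ so that it is genuinely a group homomorphism preserving $\b{\Gamma}$; the reality of $\lambda(\f{a})$ is then the only remaining input, and it rests on the self-adjointness already available from the earlier work.
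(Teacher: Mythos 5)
First, a point of reference: this paper does not prove Proposition \ref{Behaviour under complex conjugation} at all; it is imported from \cite[Proposition 7.9]{BMana}, so there is no in-paper argument to compare yours against line by line. Judged on its own, your proposal is structurally sound and, in its first four steps, essentially complete: the involution $\iota$ you construct is (up to the sign convention in the $w$-variable, which you rightly flag) exactly the automorphism $\b{g}\mapsto\tilde{\b{g}}$ of Arakawa that the paper itself recalls in Section \ref{sec:Sturm}, where the identity $\overline{J_{k,S}(\tilde{\b{g}},(-\bar{\tau},\bar{w}))}=J_{k,S}(\b{g},(\tau,w))$ is quoted; your claims (a) and (b), the intertwining $(f|_{k,S}\b{g})^c=f^c|_{k,S}\b{g}^\iota$, the $\iota$-stability of $\b{\Gamma}$ and of $1_H\diag[\T{r}^{-1},r]$, and the resulting identity $(f|T(\f{a}))^c=f^c|T(\f{a})$ are all correct. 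This yields $f^c\in S^n_{k,S}(\b{\Gamma})$ and $f^c|T(\f{a})=\overline{\lambda(\f{a})}\,f^c$.

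The gap is in the last step, which is also the crux of the proposition. To replace $\overline{\lambda(\f{a})}$ by $\lambda(\f{a})$ you invoke ``self-adjointness of $T(\f{a})$ \ldots established in \cite{BMana}''. But what the present paper actually quotes from \cite[Section 7.4]{BMana} (in the proof of Theorem \ref{ratio of inner products}) is only that the operators $T(\f{a})$ are \emph{normal}, and normality does not force real eigenvalues; your intertwining alone shows only that the eigenvalue systems of $f$ and $f^c$ are complex conjugates of one another. So, as written, the proof leans on an unverified citation at exactly the point where the mathematical content lies. The fact you need is true, but it deserves an argument: the Petersson adjoint of a double coset operator $[\b{\Gamma}\b{\xi}\b{\Gamma}]_{k,S}$ is $[\b{\Gamma}\b{\xi}^{-1}\b{\Gamma}]_{k,S}$, and for $\b{\xi}=1_H\diag[\T{r}^{-1},r]$ with $r\equiv 1_n \bmod{\f{c}}$ both $\b{\xi}$ and $\b{\xi}^{-1}=1_H\diag[\T{r},r^{-1}]$ have local components lying in the local congruence groups at every place dividing $\f{c}$, and the same elementary divisors at all other places; hence $\b{\Gamma}\b{\xi}^{-1}\b{\Gamma}$ is again one of the double cosets occurring in the definition of $T(\f{a})$, and summing over the classes of $r$ with $\det(r)\f{o}=\f{a}$ gives $T(\f{a})^{*}=T(\f{a})$ and so $\lambda(\f{a})\in\R$. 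Note this is precisely where the normalization $r-1_n\in M_n(\f{c})$ matters: for $\Gamma_1$-type level with a genuine diamond action (as for classical eigenforms with nontrivial nebentypus) the eigenvalues need not be real and the statement would fail. With this step either proved as above or pinned to a precise self-adjointness statement in \cite{BMana}, the remainder of your conclusion ($D(s,f^c,\chi)=D(s,f,\chi)$, hence equality of the $L$-functions, since the normalizing factor $\f{L}(\chi,s)$ of Theorem \ref{Euler Product Representation} does not depend on $f$) is fine.
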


\subsection{Doubling method}
The $L$-function introduced above may be also obtained via a doubling method. We chose to take Arakawa's approach \cite{Ar94} and considered a homomorphism  
$$\iota_A\colon\Jac^{m,l}\times\Jac^{n,l}\to\Jac^{m+n,l},$$ 
$$\iota_A((\l,\mu,\kappa)g\times (\l',\mu',\kappa')g'):= ((\l\, \l'), (\mu\, \mu'),\kappa+\kappa')\iota_S(g\times g'),$$
where 
$$\iota_S:G^m\times G^n\hookrightarrow G^{m+n},\quad \iota_S\(
\(\begin{smallmatrix}
a  & b  \\ c  & d  \\
\end{smallmatrix}\)\times
\(\begin{smallmatrix}
a'  & b'  \\ c'  & d' \\
\end{smallmatrix}\)\) :=
\(\begin{smallmatrix}
a	&	 &  b  &    \\
	& a' &     & b' \\
c	&	 & d   &    \\
	& c' &     & d' \\
\end{smallmatrix}\) .$$
The map $\iota_A$ induces an embedding 
\[
\mathcal{H}_{m,l} \times \mathcal{H}_{n,l} \hookrightarrow \mathcal{H}_{n+m,l},\,\,\,z_1 \times z_2 \mapsto \diag[z_1,z_2],
\]
defined by 
\[
(\tau_1,w_1) \times  (\tau_2,w_2) \mapsto  (\diag[\tau_1,\tau_2], (w_1 \, w_2)).
\] 
The doubling method suggests that computation of the
Petersson inner product of a cuspidal Siegel-Jacobi modular form $f$ on $\mathcal{H}_{n,l}$ against a Siegel-type Jacobi Eisenstein series pull-backed from $\mathcal{H}_{n+m,l}$ leads to an $L$-function associated with $f$. Before we state the result, we define the Jacobi Eisenstein series which was used. It will appear again in the last section (Theorem \ref{algebraic properties of Eisenstein series}), where the question of its nearly holomorphicity will be addressed. In order to avoid excessive and unnecessary notation, we state here an adelic definition of the Eisenstein series; the interested reader is referred to \cite{BMana} for a derivation of a classical definition.

Fix a weight $k \in \Z^{\a}$ and consider a Hecke character $\chi$ such that for a fixed integral ideal $\f{c}$ of $F$ we have
\begin{enumerate}
	\item  $\chi_v(x) = 1$ for all $x \in \f{o}_v^{\times}$ with $x-1 \in \f{c}_v$, $v\in\h$ (finite places of $F$),
	\item $\chi_{\a}(x_{\a}) = \sgn (x_{\a})^k:= \prod_{v \in \mathbf{a}} \left(\frac{x_v}{|x_v|}\right)^{k_v}$, for $x_{\a}  = (x_v) \in \prod_{v \in \mathbf{a}} \mathbb{R}$;
\end{enumerate}
we will also write $\chi_{\f{c}}:=\prod_{v|\f{c}}\chi_v$. Further, let 
$$K^n:=K_{\h}[\f{b},\f{c}] (\prod_{v\in\a} H^{n,l}(F_v) \rtimes D_{\infty}),$$
where
$K_{\h}[\f{b},\f{c}]\subset\prod^{'}_{v\in\h} \b{G}(F_v)$ is defined so that $\b{\Gamma}[\f{b},\f{c}]=\b{G}(F)\cap \prod_{v\in\a} \b{G}(F_v)K_{\h}[\f{b},\f{c}]$ and $D_{\infty}$ is a maximal compact subgroup of $\Sp_n(\mathbb{R})$.

We define an absolutely convergent adelic Eisenstein series of Siegel type on a Jacobi group with a parabolic subgroup
$$\b{P}^n(F):=\left\{ (0,\mu,\kappa )g:\mu\in M_{l, n}(F),\kappa\in Sym_l(F), g\in P^n(F)\right\} ,$$
where $P^n(F)$ is a Siegel subgroup of $G^n(F)$ as follows:
$$E^n(x,s;\chi):= \sum_{\gamma \in\b{P}^n(F) \setminus \b{G}^n(F)} \phi(\gamma x,s;\chi),\quad\Re(s) >\frac12\( n+l+1\) ,$$
where $\phi(x,s;\chi) := 0$ if $ x \notin \b{P}^n(\mathbb{A})K^n$ and otherwise, if $x = \b{p}\b{w}$ with $\b{p} \in \b{P}^n(\mathbb{A})$ and $\b{w} \in K^n$, we set
$$
\phi(x,s;\chi):= \chi(\det(d_p))^{-1} \chi_{\mathfrak{c}}(\det(d_w))^{-1} J_{k,S}(\b{w},\mathbf{i}_0)^{-1} |\det(d_p)|_{\A}^{-2s},  
$$
where $p,w \in \Sp_n(\A)$ denote symplectic parts of $\b{p},\b{w}$, respectively. The classical Jacobi Eisenstein series which corresponds to $E^n(x,s;\chi)$ is given by 
$$E^n(z,s;\chi):=J_{k,S}(x,\b{i}_0)E^n(x,s;\chi),$$
where $\b{i}_0=(i1_n,0)^{\a}\in\mathcal{H}_{n,l}$, and $x\in\prod_{v\in\a}\b{G}(F_v)$ is such that $x\b{i}_0=z$; for a formula which does not involve $x$ see \cite[equation (11)]{BMana}.

\begin{thm}[\cite{BMana}]\label{thm:dm_identity}
Let $f \in S_{k,S}^n(\b{\Gamma})$ be a Hecke eigenform whose index $S$ satisfies the condition $M_{\f{p}}^+$ for all prime ideals $\f{p}$ of $F$ coprime to $\f{c}$, and let $E^{2n}(z,s;\chi)$ be the Eisenstein series defined above. Then, there exists an $\b{\rho} \in \b{G}(F)$ such that:
\begin{align}\label{eq:dm_identity}\nonumber
G&(\chi,2s-n-l/2) N(\f{b})^{2ns}\chi_{\h}(\theta)^{-n}(-1)^{n(s-k/2)} vol(A)\Lambda_{k-l/2,\f{c}}^{2n}(s-l/4,\chi\psi_S)\\
&\cdot <(E^{2n}|_{k,S} \b{\rho}) (\diag[z_1,z_2],s;\chi),(f|_{k,S}\b{\eta}_n)^c(z_2)>\nonumber\\
&\hspace{8cm}=\nu_{\f{e}}c_{S,k}(s-k/2) \b{\Lambda}(s,f,\chi) f(z_1),
\end{align}
where 
$$\Lambda^{2n}_{k-l/2,\mathfrak{c}}(s-l/4,\chi\psi_S) = \begin{cases} L_{\mathfrak{c}}(2s-l/2,\chi \psi_S) \prod_{i=1}^{n} L_{\mathfrak{c}}(4s-l-2i,\chi^{2}) & \hbox{if } l \in 2\mathbb{Z},\\
 \prod_{i=1}^{[(2n+1)/2]} L_{\mathfrak{c}}(4s-l-2i+1,\chi^{2}) & \hbox{if } l \notin 2\mathbb{Z},
 \end{cases}$$
$$\b{\Lambda}(s,f,\chi)\! :=\! L(2s-n-l/2,f,\chi)\!  
\begin{cases} L_{\mathfrak{c}}(2s-l/2,\chi \psi_S), & l \in 2\Z,\\1, & l \notin 2\Z,\end{cases}$$
$$L_{\mathfrak{c}}(s,\chi)=L(s,\chi)\prod_{\mathfrak{q}|\mathfrak{c}}(1-\chi(\mathfrak{q})N(\mathfrak{q})^{-s}),$$
\begin{equation}\label{definition of $G$}
G(\chi,2s-n-l/2) := \prod_{(\f{p},\f{c})=1} G_{\f{p}}(\chi,2s-n-l/2), 
\end{equation}
and the rest of notation is as in \cite[Section 6]{BMana}; in particular: $\b{\eta}_n=1_H\(\begin{smallmatrix} & -1_n\\ 1_n & \end{smallmatrix}\)$, and for $\sigma = (\sigma_{v}) \in\C^\a$ satisfying $\Re (\sigma_{v})\geq 0$ and $\Re (\sigma_v)+k_v-l/2>2n$ for each $v\in\a$,
$$c_{S,k}(\sigma) =\prod_{v \in\a}\(\pm \det(2S_{v})^{-n}2^{n(n+3)/2-4\sigma_{v}-nk_{v}}\pi^{n(n+1)/2}\frac{\Gamma_n(\sigma_{v}+k_{v}-\frac{l}{2}-\frac{n+1}{2})}{\Gamma_n(\sigma_{v}+k_{v}-\frac{l}{2})}\)$$ 
and $\Gamma_n(x):=\pi^{n(n-1)/4}\prod_{i=0}^{n-1}\Gamma (x-\frac{i}{2})$. Furthermore the character $\psi_S$ is the Hecke character corresponding to the quadratic extension
$F(\det(2S)^{1/2})/F$ if $l$ is odd and to the quadratic extension $F((-1)^{l/4}\det(S)^{1/2})/F$ if $l$ is even.
\end{thm}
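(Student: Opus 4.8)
The plan is to establish the identity by the doubling method of Piatetski-Shapiro and Rallis, in the form adapted to the Jacobi group following Arakawa. The starting point is to read the left-hand side as a function of $z_1$ obtained by integrating the pulled-back Eisenstein series against $f$ in the variable $z_2$, and to unfold the Eisenstein series along the embedding $\iota_A$. First I would analyze the orbits of $\b{P}^{2n}(F)$ acting on $\b{G}^{2n}(F)/\iota_A(\b{G}^n(F)\times\b{G}^n(F))$; as in the Siegel case there is a single open (dense) orbit whose stabilizer is essentially the image of the diagonally embedded $\b{G}^n$, together with lower-dimensional orbits that factor through proper parabolic subgroups of $\b{G}^n$. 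The key structural input is that when the resulting sum is integrated against the cusp form $f$, every non-open orbit contributes zero by cuspidality, so that only the open cell survives.

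Having reduced to the open cell, the next step is to unfold: the sum over $\b{P}^{2n}(F)\setminus\b{G}^{2n}(F)$ together with the integral over $\b{\Gamma}\setminus\H_{n,l}$ collapses to a single adelic integral of the section $\phi$ against $f$ over $\b{G}^n(\A)$, with the reflection elements $\b{\eta}_n$ and $\b{\rho}$ serving to place the open cell in the correct position relative to the level structure and to pass from $f$ to $(f|_{k,S}\b{\eta}_n)^c$. Because $\phi$ is factorizable over the places of $F$ and $f$ is a Hecke eigenform, this global integral factors as a product of local zeta integrals $Z_v(s)$, and the Hecke-equivariance of the whole construction forces the output, as a function of $z_1$, to be a Hecke eigenform with the same eigenvalues as $f$; hence it is a scalar multiple of $f(z_1)$, the scalar being read off from the product of the $Z_v(s)$.

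The heart of the proof is then the evaluation of the local integrals. At the finite places $v$ coprime to $\f{c}$ at which $S$ satisfies the condition $M_{\f{p}}^+$, I would carry out an unramified computation: the local integral is a Rankin--Selberg type integral of a spherical section against the spherical vector, and it evaluates to the local standard $L$-factor $L_{\f{p}}$ in terms of the Satake parameters $\mu_{\f{p},i}$, up to the normalizing Euler factors collected in $G_{\f{p}}(\chi,s)$ and $\f{L}_{\f{p}}(\chi,s)$; here Theorem \ref{Euler Product Representation}, which converts the Hecke Dirichlet series $D(s,f,\chi)$ into the Euler product $L(s,f,\chi)$, is exactly what packages these factors into $L(2s-n-l/2,f,\chi)$ and the accompanying $L_{\f{c}}(2s-l/2,\chi\psi_S)$. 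At the archimedean places $v\in\a$ the local integral is an explicit confluent-hypergeometric integral against the weight function $\Delta_{S,k}$, and evaluating it produces the gamma-factor $c_{S,k}(s-k/2)$. The remaining bad finite places and the places dividing $\f{c}$ contribute the elementary normalizing constants $\nu_{\f{e}}$, $N(\f{b})^{2ns}$, $\chi_{\h}(\theta)^{-n}$ and the sign $(-1)^{n(s-k/2)}$, whose precise shape is dictated by the choice of $\b{\rho}$.

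The main obstacle is the explicit local computation at the good finite and archimedean places in the presence of the Heisenberg/index-$S$ structure. Unlike the purely symplectic doubling method, the Jacobi setting forces one to track the theta (Fourier--Jacobi) part throughout: the local sections carry the index $S$ through the factor $\mathcal{J}_{S_v}$, and the spherical computation must be organised so that the quadratic character $\psi_S$ and the correction factors depending on whether $l$ is even or odd emerge correctly -- this is where the condition $M_{\f{p}}^+$ is genuinely used, to guarantee that the relevant local lattice data is maximal and the local integral has the clean unramified form. The archimedean integral is likewise more delicate than its Siegel analogue because of the extra $\exp_\a(-4\pi\tr(S[v]y^{-1}))$ factor in $\Delta_{S,k}$, and matching the resulting product of gamma functions with the stated $c_{S,k}(\sigma)$, including the convergence range $\Re(\sigma_v)+k_v-l/2>2n$, requires care.
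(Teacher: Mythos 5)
You should first note that this paper does not prove Theorem \ref{thm:dm_identity} at all: it is imported from \cite{BMana}, and the sentence immediately following the statement says it is a combination of equations (30) and (31) of Section 9 there. So your proposal must be measured against the proof in \cite{BMana}, which follows Arakawa's \emph{classical} unfolding rather than the Piatetski-Shapiro--Rallis adelic formalism you describe. In that proof the Petersson integral of the pulled-back Eisenstein series is unfolded using an explicit double coset decomposition of $\b{P}^{2n}(F)\back \b{G}^{2n}(F)/\iota_A(\b{G}^n\times\b{G}^n)$; cuspidality kills the negligible cosets (as you say), but the surviving main term is not a factorizable adelic integral --- it is a sum over cosets which is recognized as the action of the Hecke operators $T(\f{a})$ on $f$. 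The eigenform property then produces the \emph{global} Dirichlet series $D(s,f,\chi)=\sum_{\f{a}}\lambda(\f{a})\chi^{*}(\f{a})N(\f{a})^{-s}$ multiplying $f(z_1)$ --- no appeal to uniqueness of the eigensystem is needed, since the unfolded sum is literally a Hecke operator applied to $f$ --- together with Arakawa's archimedean confluent integral, which is what yields $c_{S,k}(s-k/2)$. Only after this is the separately proven Theorem \ref{Euler Product Representation} applied to convert $\f{L}(\chi,s)D(s+n+l/2,f,\chi)$ into $L(s,f,\chi)$; and it is inside the proof of \emph{that} theorem, via Murase's local lattice-theoretic analysis, that the hypothesis $M_{\f{p}}^+$ does its work.

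The genuine gap in your proposal is the factorization step: ``because $\phi$ is factorizable and $f$ is a Hecke eigenform, this global integral factors as a product of local zeta integrals $Z_v(s)$.'' For the Jacobi group this is not routine. The group is not reductive, and factorizing a global zeta integral requires a uniqueness (multiplicity-one) statement for the local functionals involved; for $\b{G}^{n,l}$ this is precisely the content of Murase's theory of Whittaker--Shintani functions, a substantial theory you assume implicitly. Relatedly, your claim that the output is ``a Hecke eigenform with the same eigenvalues as $f$, hence a scalar multiple of $f$'' would require a multiplicity-one statement for the Jacobi Hecke eigensystem that is not available (and is not needed in the actual argument). Finally, your write-up is internally inconsistent at exactly the critical point: if the unramified local integrals really evaluated to the factors $L_{\f{p}}$, then Theorem \ref{Euler Product Representation} would be redundant, yet you invoke it as ``what packages these factors.'' This conflates two distinct mechanisms --- local zeta integrals on one side, classical unfolding to Hecke eigenvalues plus a Hecke-algebra Euler product theorem on the other. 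The cited proof uses the second, and that is also why $M_{\f{p}}^+$ appears where it does rather than as a condition ensuring ``the local integral has the clean unramified form.''
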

Statement of the above theorem expresses a combination of equations (30) and (31) from \cite[Section 9]{BMana} before multiplying them by the factor $\mathcal{G}_{k-l/2,2n}(s-l/4)$. 

\begin{rem} 
In fact, the results proved in \cite{BMana} are more general than the ones presented above. Indeed, we worked with congruence subgroups of the form 
$$\tilde{\b{\Gamma}}:=\{ (\l,\mu,\kappa)g\in C[\f{o}, \f{b}^{-1}, \f{b}^{-1}]\rtimes \Gamma[\f{b}^{-1}\f{e},\f{bc}]:a_g-1_n\in M_n(\f{e})\} ,$$
where $\mathfrak{e}$ is an integral ideal such that $\f{c}\subset\f{e}$ and $\f{e}$ is prime to $\f{e}^{-1}\f{c}$; then
$$\b{\Gamma}_1(\mathfrak{c}) \subseteq \tilde{\b{\Gamma}} \subseteq \b{\Gamma}_0(\mathfrak{c}),$$
where the last group is obtained by setting $\mathfrak{e} = \mathfrak{o}$. However, in this paper we decided to work with $\f{e}=\f{c}$, because for simplicity reasons we restricted the proof of our main theorem to this case.
\end{rem}


\section{Arithmetic properties of Siegel-Jacobi modular forms}\label{sec:arithmetic}

In this section we will write $\b{f}$ (instead of $f$) for Siegel-Jacobi modular forms and use $f$ for other types of forms.

For a congruence subgroup $\b{\Gamma}$ of $\b{G}(F)$ and a subfield $K$ of $\mathbb{C}$ we define the set
\[
M_{k,S}^{n}(\b{\Gamma}, K) := \{ \b{f} \in M_{k,S}^{n}(\b{\Gamma}) : \b{f}(\tau,w) = \sum_{t,r} c(t,r) \mathbf{e}_{\a}(\tr(t\tau +\T{r}w)), \,\,c(t,r) \in K \} ;
\]
the subspace $S_{k,S}^{n}(\b{\Gamma}, K)$ consisting of cusp forms is defined in a similar way. Moreover, we write $M_{k,S}^{n}(K)$ for the union of all spaces $M_{k,S}^{n}(\b{\Gamma},K)$ for all 
congruence subgroups $\b{\Gamma}$.

For an element $\sigma \in Aut(\mathbb{C})$ and an element $k = (k_v) \in \mathbb{Z}^{\mathbf{a}}$ we define $k^{\sigma} := (k_{v\sigma}) \in \mathbb{Z}^{\mathbf{a}}$, where $v \sigma$ is the archimedean place corresponding to the embedding $ K \stackrel{\tau_v}{\hookrightarrow} \mathbb{C} \stackrel{\sigma}{\rightarrow} \mathbb{C}$, if $\tau_v$ is the embedding in $\mathbb{C}$ corresponding to the archimedean place $v$.  
\begin{prop} Let $k \in \mathbb{Z}^{\mathbf{a}}$, and let $\Phi$ be the Galois closure of $F$ in $\overline{\mathbb{Q}}$, and $\Phi_k$ the subfield of $\Phi$ such that
\[
Gal(\Phi/ \Phi_k) = \left\{ \sigma \in Gal(\Phi /F) : \,\,\,  k^{\sigma} = k \right\}.
\]
Then $M_{k,S}^{n}(\mathbb{C}) = M_{k,S}^{n}(\Phi_k) \otimes_{\Phi_k} \mathbb{C}.  $ 
\end{prop}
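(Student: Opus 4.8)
The plan is to deduce the statement from two arithmetic inputs followed by a formal Galois descent. The first input is a $\sigma$-semilinear action of $Aut(\C/\Q)$ on Fourier expansions: for $\sigma \in Aut(\C/\Q)$ and $\b{f} \in M_{k,S}^n(\C)$ with expansion $\b{f} = \sum_{t,r} c(t,r)\,\ea(\tr(t\tau + \T{r}w))$, set $\b{f}^\sigma := \sum_{t,r}\sigma(c(t,r))\,\ea(\tr(t\tau+\T{r}w))$. One must check that $\b{f}^\sigma$ is again a Siegel-Jacobi form, now of weight $k^\sigma$ and index $S^\sigma$, for a suitable congruence subgroup (the union over all levels in the definition of $M_{k,S}^n(\C)$ absorbs the change of level). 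The route I would take is the theta decomposition, which writes $\b{f}$ as a finite combination of Jacobi theta series (with explicit algebraic coefficients) and Siegel modular forms of half-integral weight over $F$, thereby transferring the question to the Siegel setting where Shimura's arithmeticity results apply. The second input is the arithmeticity statement $M_{k,S}^n(\C) = M_{k,S}^n(\overline{\Q}) \otimes_{\overline{\Q}} \C$, i.e. that the space has a basis of forms with algebraic Fourier coefficients; the case $F=\Q$ is Shimura's theorem recalled in the introduction, and the general case follows along the same lines through the theta decomposition.

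Granting these, I would carry out the descent as follows. First note $F \subseteq \Phi_k$: since $Gal(\Phi/\Phi_k)$ is by definition a subgroup of $Gal(\Phi/F)$, we have $\Phi_k \supseteq F$, so every $\sigma \in Gal(\overline{\Q}/\Phi_k)$ fixes $F$ and hence fixes $S$ (whose entries lie in $F$), giving $S^\sigma = S$. Next, because $\Phi/\Q$ is Galois, every such $\sigma$ preserves $\Phi$, and the weight $k^\sigma$ depends only on $\sigma|_\Phi$ — it is the permutation of the components of $k$ induced by $\tau_v \mapsto \sigma\circ\tau_v$ on the embeddings of $F$, all of which factor through $\Phi$. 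As $\sigma|_\Phi \in Gal(\Phi/\Phi_k)$ fixes $k$ by the very definition of $\Phi_k$, we get $k^\sigma = k$. Hence the action of the first input restricts to a $Gal(\overline{\Q}/\Phi_k)$-semilinear action on the $\overline{\Q}$-space $M_{k,S}^n(\overline{\Q})$, whose fixed subspace is, by comparison of Fourier coefficients and the fact that $\overline{\Q}^{\,Gal(\overline{\Q}/\Phi_k)}=\Phi_k$, exactly $M_{k,S}^n(\Phi_k)$.

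Since every $\b{f} \in M_{k,S}^n(\overline{\Q})$ has coefficients in a finite extension of $\Phi_k$, the action is continuous (each form is fixed by an open subgroup), so Galois descent (Speiser's theorem / additive Hilbert 90) gives $M_{k,S}^n(\overline{\Q}) = M_{k,S}^n(\Phi_k)\otimes_{\Phi_k}\overline{\Q}$. Tensoring with $\C$ over $\overline{\Q}$ and combining with the arithmeticity input yields
$$M_{k,S}^n(\C) = M_{k,S}^n(\overline{\Q})\otimes_{\overline{\Q}}\C = M_{k,S}^n(\Phi_k)\otimes_{\Phi_k}\C,$$
as required.

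The descent itself is routine; the real content — and the main obstacle — lies in the two arithmetic inputs over a general totally real $F$, namely that $\b{f}\mapsto\b{f}^\sigma$ genuinely produces a modular form of weight $k^\sigma$ and index $S^\sigma$, and that a basis of algebraic forms exists. The delicate point throughout is the bookkeeping of how $\sigma$ permutes the archimedean places of $F$ together with the components of the weight (and the compatibility of this with the index and level); getting this exactly right is precisely what pins down $\Phi_k$ as the correct field of definition, and it is what makes the statement genuinely finer than the $F=\Q$ case where the weight is fixed by $\sigma$ and $\Phi_k=\Q$.
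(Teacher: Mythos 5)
Your proposal is correct and takes essentially the same approach as the paper: the paper's own proof likewise reduces the statement, via the theta decomposition exactly as in Shimura's proof of the case $F=\mathbb{Q}$ (\cite[Proposition 3.8]{Sh78}), to the corresponding arithmeticity statements for Siegel modular forms of integral or half-integral weight over totally real fields (\cite[Theorems 10.4 and 10.7]{Sh00}), with the Galois-descent formalities you write out left implicit in those references. The one imprecision---calling the conjugate form one of index $S^{\sigma}$, whereas the $\mathbb{Q}$-rationality of the theta series and of the Fourier indexing keeps the index equal to $S$---is harmless, since your descent only uses automorphisms fixing $F$, for which $S^{\sigma}=S$ anyway.
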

\begin{proof} If $F= \mathbb{Q}$, this is \cite[Proposition 3.8]{Sh78}. A careful examination of the proof \cite[page 60]{Sh78} shows that the proof is eventually reduced to the corresponding statement for Siegel modular forms of integral (if $l$ is even) or half-integral (if $l$ is odd) weight. However, in both cases the needed statement does generalize to the case of totally real fields, as it was established in \cite[Theorems 10.4 and 10.7]{Sh00}. 
\end{proof}

Given an $\b{f} \in M_{k,S}^{n}(\mathbb{C})$, we define
 \[
 \b{f}_*(\tau,w) := \mathbf{e}_{\a}(\tr(S w(\tau- \overline{\tau})^{-1}\transpose{w})) \b{f}(\tau,w)
 \]
and write $\mathbb{Q}^{ab}$ for the maximal abelian extension of $\mathbb{Q}$. Moreover, for $k \in \frac{1}{2} \mathbb{Z}^{\mathbf{a}}$ such that $k_v - \frac{1}{2} \in \mathbb{Z}$ for all $v \in \mathbf{a}$ we write $M_k^n$ for the space of Siegel modular forms of weight $k$, and of any congruence subgroup, and $M_k^n(K)$ for those with the property that all their Fourier coefficients at infinity  lie in $K$ (see for example \cite[Chapter 2]{Sh00} for a detailed study of these sets).

\begin{prop} \label{Hecke Operators preserve field of definition}Let $K$ be a field that contains $\Q^{ab}$ and $\Phi$ as above. Then  
\begin{enumerate}
\item $\b{f} \in M_{k,S}^{n}(K)$ if and only if $\b{f}_{*}(\tau, v \Omega_{\tau} ) \in M_{k}^{n}(K)$ for all $v \in M_{l,2n}(F)$, where $\Omega_{\tau} := \transpose{(\tau \,\,\,1_n)}$.
\item For any element $\gamma \in \Sp_n(F) \hookrightarrow \b{G}^n(F)$ and $\b{f} \in M_{k,S}^{n}(K)$,  we have 
\[
\b{f}|_{k,S} \gamma \in M_{k,S}^{n}(K).
\]
Moreover if $\b{f} \in M_{k,S}^n(\b{\Gamma},K)$, it follows that $\b{f}|T_{r} \in M_{k,S}^n(\b{\Gamma},K)$ for any $r \in Q(\f{c})$.
\end{enumerate}
\end{prop}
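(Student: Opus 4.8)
The plan is to exploit the assignment $\b{f}\mapsto g_v$, where $g_v(\tau):=\b{f}_*(\tau,v\Omega_\tau)$, as a dictionary that transports the arithmetic of Siegel-Jacobi forms to that of Siegel modular forms, where the corresponding facts are available from \cite{Sh00}. Writing $v=(p\ q)$ with $p,q\in M_{l,n}(F)$ we have $v\Omega_\tau=p\tau+q$, and throughout I shall use that $\mathbf{e}_{\a}(x)=e(\mathrm{Tr}_{F/\Q}(x))$ is a root of unity, hence lies in $\Q^{ab}\subseteq K$, for every $x\in F$.

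For the forward implication in (1) I would substitute $w=p\tau+q$ into both the Fourier expansion $\b{f}(\tau,w)=\sum_{t,r}c(t,r)\mathbf{e}_{\a}(\tr(t\tau+\transpose{r}w))$ and the factor defining $\b{f}_*$, and then collect terms. Since $g_v$ is a Siegel modular form of weight $k$ (this is the mechanism underlying the previous proposition, ultimately \cite{Sh78}), its Fourier coefficients are obtained from the $c(t,r)$ by re-indexing, multiplication by the values $\mathbf{e}_{\a}(\tr(\transpose{r}q))\in\Q^{ab}$, and summation over the relevant lattice points; as $c(t,r)\in K\supseteq\Q^{ab}$ this gives $g_v\in M_k^n(K)$. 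For the converse I would recover the $c(t,r)$ by varying $v$: fixing $p$ and letting $q$ run through representatives of $N^{-1}M_{l,n}(\f{o})/M_{l,n}(\f{o})$ for a suitable integer $N$, the Fourier coefficients of the forms $g_{(p\ q)}$ assemble into a finite discrete Fourier transform of the $c(t,r)$ in the variable $q$, whose character matrix has entries in $\Q^{ab}$ and is invertible there; inverting it expresses each $c(t,r)$ as a $K$-linear combination of Fourier coefficients of the $g_v$, so that $\b{f}\in M_{k,S}^n(K)$. Letting $p$ vary as well disentangles the Siegel index $T$ of $g_v$, which couples $t$ with $\transpose{r}p$.

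For (2) I would first treat $\gamma\in\Sp_n(F)$ by establishing that $\b{f}\mapsto g_v$ intertwines the Jacobi slash $|_{k,S}\gamma$ with the Siegel slash $|_k\gamma$: manipulating the cocycle $J_{k,S}$ on the locus $w=v\Omega_\tau$ should yield $(\b{f}|_{k,S}\gamma)_*(\tau,v\Omega_\tau)=(g_{v'}|_k\gamma)(\tau)$ for $v'$ the image of $v$ under the natural right action of $\gamma$ on $M_{l,2n}(F)$. As $g_{v'}\in M_k^n(K)$ by (1) and the Siegel slash by $\Sp_n(F)$ preserves $K$-rationality whenever $K\supseteq\Q^{ab}$ (\cite[Section~10]{Sh00}), all the forms $(\b{f}|_{k,S}\gamma)_*(\tau,v\Omega_\tau)$ lie in $M_k^n(K)$, and the converse in (1) gives $\b{f}|_{k,S}\gamma\in M_{k,S}^n(K)$. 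I would also record the easier statement for a Heisenberg element $h\in H^{n,l}(F)$: a direct computation shows that the Fourier coefficients of $\b{f}|_{k,S}h$ are those of $\b{f}$, up to re-indexing and multiplication by values of $\mathbf{e}_{\a}$ at arguments in $F$, hence again in $\Q^{ab}\subseteq K$, so that $\b{f}|_{k,S}h\in M_{k,S}^n(K)$. Writing a general $\b{\alpha}\in\b{G}^n(F)$ as a Heisenberg element times a symplectic one, it follows that every slash $\b{f}|_{k,S}\b{\alpha}$ preserves $M_{k,S}^n(K)$. Finally, since $\b{f}|T_r$ is by definition a finite sum of slash operators $\b{f}|_{k,S}\b{\alpha}$ with $\b{\alpha}\in\b{G}^n(F)$, and $T_r$ maps $M_{k,S}^n(\b{\Gamma})$ to itself, we obtain $\b{f}|T_r\in M_{k,S}^n(\b{\Gamma},K)$ for $r\in Q(\f{c})$.

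I expect the main obstacle to lie in the converse direction of (1) and in the intertwining identity of (2). For the converse one must select enough points $v$ to recover every coefficient $c(t,r)$ and check that the associated linear system is genuinely invertible over $K$; this is precisely where the interaction of the twisting factor in $\b{f}_*$ with the Fourier expansion along the torsion locus $w=v\Omega_\tau$ — and the reduction to the known arithmeticity of Siegel modular forms — has to be carried out with care. The intertwining identity $(\b{f}|_{k,S}\gamma)_*(\tau,v\Omega_\tau)=(g_{v'}|_k\gamma)(\tau)$ similarly demands careful bookkeeping of the index part $\mathcal{J}_S$ of the automorphy factor and of the transformation $v\mapsto v'$.
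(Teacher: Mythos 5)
Your proposal is correct and takes essentially the same approach as the paper: the paper's own proof simply cites Shimura's Proposition 3.2 of \cite{Sh78} for $F=\mathbb{Q}$ — whose underlying argument is precisely your mechanism of specializing at $w=v\Omega_\tau$, recovering the coefficients $c(t,r)$ by Fourier inversion in $q$ over $\mathbb{Q}^{ab}$, and intertwining the Jacobi slash with the Siegel slash — and then notes that part (2) reduces to the stability of $M_k^n(K)$ under the $\Sp_n(F)$-action (\cite[Theorem 10.7 (6)]{Sh00}), with the $T_r$ statement following from the definition of the Hecke operator. You have in effect reconstructed the content of the cited result rather than citing it, and the ingredients and their roles coincide with those indicated in the paper.
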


\begin{proof} If $F=\mathbb{Q}$, this is \cite[Proposition 3.2]{Sh78}. It is easy to see that the proof generalizes to the case of any totally real field. Indeed, the first part of the proof is a direct generalization of the argument used by Shimura. The second part requires the fact that the space $M_{k}^n(K)$ is stable under the action of elements in $\Sp_n(F)$, which is true for any totally real field, as it is proved in \cite[Theorem 10.7 (6)]{Sh00}. The last statement follows from the definition of the Hecke operator $T_{r}$.
\end{proof}

For a symmetric positive definite matrix $S\in Sym_l(F)$, $h \in M_{l,n}(F)$ and a lattice $L \subset M_{l,n}(F)$ we define the Jacobi theta series of characteristic $h$ by
\[
\Theta_{S,L,h}(\tau,w) = \sum_{x \in L} \mathbf{e_a}(\tr(S\(\frac{1}{2} (x + h)\tau\, \transpose{(x+h)} + w\,\transpose{(x+h)} \) )).
\]

\begin{thm}  \label{Structure of holomorphic Siegel-Jacobi}Assume that $n > 1$ or $F \neq \mathbb{Q}$, and let $K$ be any subfield of $\mathbb{C}$. Let $A \in \GL_l(F)$ be such that $A S \,\transpose{A} = \diag[s_1, \ldots, s_l ]$, and define the lattices $\Lambda_1 := A M_{l,n}(\mathfrak{o}) \subset M_{l,n}(F)$ and $\Lambda_2 := 2 \diag[s_1^{-1}, \ldots, s_l^{-1}] M_{l,n}(\mathfrak{o}) \subset M_{l,n}(F)$. Then there is an isomorphism
\[
\Phi: M^n_{k,S}(K) \cong \bigoplus_{h \in \Lambda_1/\Lambda_2} M^n_{k-l/2}(K)
\] 
given by $\b{f} \mapsto \left( f_{h}\right)_h$, where the $f_h \in M^n_{k-l/2}(K)$ are defined by the expression
\[
\b{f}(\tau,w) = \sum_{h \in \Lambda_1/\Lambda_2} f_h(\tau) \Theta_{2S,\Lambda_2,h}(\tau,w).
\]

Moreover, under the above isomorphism,
\[
   \Phi^{-1} \left( \bigoplus_{h \in \Lambda_1/\Lambda_2} S^n_{k-l/2}(K) \right) \subset S^n_{k,S}(K).
\]
\end{thm}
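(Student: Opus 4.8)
The plan is to transfer cuspidality from the theta components $f_h$ to $\b{f}$ through the Schur complement of the index $S$, first at the cusp $g=1_{2n}$ and then at every cusp by means of the transformation law of the theta family. The arithmetic part is immediate: since the isomorphism $\Phi$ of the first part of the theorem is $K$-linear, any $\b{f}$ whose components $f_h$ all lie in $S^n_{k-l/2}(K)\subset M^n_{k-l/2}(K)$ already lies in $M^n_{k,S}(K)$, so only the cusp condition for $\b{f}$ remains to be shown.

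First I would analyse the Fourier expansion at infinity. Writing $f_h(\tau)=\sum_{t'}a_h(t')\mathbf{e}_{\mathbf{a}}(\tr(t'\tau))$ and substituting into $\b{f}=\sum_h f_h\,\Theta_{2S,\Lambda_2,h}$, the coefficient of $\mathbf{e}_{\mathbf{a}}(\tr(t\tau))\mathbf{e}_{\mathbf{a}}(\tr(\transpose{r}w))$ in $\b{f}$ equals $a_h(t')$, where the class $h\in\Lambda_1/\Lambda_2$ is fixed by $r$ modulo $\Lambda_2$ and $t'$ is, up to the normalisation of the index, the Schur complement of $S$ in $\spmatrix{S & r \\ \transpose{r} & t}$ (the ``discriminant'' attached to $(t,r)$). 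Since every $S_v$ is positive definite, $\spmatrix{S_v & r_v \\ \transpose{r_v} & t_v}$ is positive definite if and only if $t'_v$ is. Hence the hypothesis that each $f_h$ be a cusp form, i.e. $a_h(t')=0$ unless $t'_v>0$ for all $v\in\a$, is precisely the statement that $c(1_{2n};t,r)$ vanishes unless $\spmatrix{S_v & r_v \\ \transpose{r_v} & t_v}$ is positive definite for all $v$; this is the cusp condition at $g=1_{2n}$.

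To reach all cusps I would use the equivariance of the decomposition under $\Sp_n(F)\hookrightarrow\b{G}^n(F)$. For such $\gamma$, which acts only on the symplectic variable, the factor of automorphy of weight $k-l/2$ attached to $f_h$ and the one of weight $l/2$, index $S$ attached to $\Theta_{2S,\Lambda_2,h}$ multiply to $J_{k,S}(\gamma,\cdot)$, and a direct cancellation yields
\begin{equation*}
\b{f}|_{k,S}\gamma=\sum_h (f_h|_{k-l/2}\gamma)\,(\Theta_{2S,\Lambda_2,h}|_{l/2,S}\gamma).
\end{equation*}
The finite family $\{\Theta_{2S,\Lambda_2,h}\}_h$ spans a space stable under $|_{l/2,S}\gamma$, the slash acting through the associated Weil representation, so $\Theta_{2S,\Lambda_2,h}|_{l/2,S}\gamma=\sum_{h'}W(\gamma)_{h,h'}\Theta_{2S,\Lambda_2,h'}$ for suitable scalars $W(\gamma)_{h,h'}$. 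By uniqueness of the theta decomposition, the $h'$-component of $\b{f}|_{k,S}\gamma$ is $\sum_h W(\gamma)_{h,h'}(f_h|_{k-l/2}\gamma)$. Each $f_h|_{k-l/2}\gamma$ is again a cusp form, because $S^n_{k-l/2}$ is stable under the action of $\Sp_n(F)$ (for half-integral weight via the metaplectic slash, cf. \cite[\S 10]{Sh00}), so every component of $\b{f}|_{k,S}\gamma$ is cuspidal. Applying the analysis of the previous paragraph to $\b{f}|_{k,S}\gamma$ then shows $c(\gamma;t,r)=0$ unless $\spmatrix{S_v & r_v \\ \transpose{r_v} & t_v}$ is positive definite for all $v$. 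As $\gamma$ was arbitrary, $\b{f}\in S^n_{k,S}(K)$.

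I expect the main obstacle to be the transformation law of the theta family under the full group $\Sp_n(F)$: one must verify that $|_{l/2,S}\gamma$ preserves the $\mathbb{C}$-span of $\{\Theta_{2S,\Lambda_2,h}\}_h$ with the correct multiplier system (metaplectic when $l$ is odd), uniformly over a totally real base, and dovetail this with the stability of half-integral weight Siegel cusp forms under $\Sp_n(F)$. Granting these two facts, the remainder is the elementary Schur complement bookkeeping of the second paragraph.
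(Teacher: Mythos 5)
Your overall strategy is the one the paper itself follows: cuspidality at the cusp at infinity via the support of the theta Fourier coefficients (your Schur--complement bookkeeping is exactly the paper's computation $4t=\transpose{r}S^{-1}r+4t_2>\transpose{r}S^{-1}r$), and the remaining cusps via equivariance of the theta decomposition under $\Sp_n(F)$ together with stability of (half-integral weight) Siegel cusp forms under the rational slash. Your paragraph at $g=1_{2n}$ is correct.

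The genuine gap is the step you flag as the ``main obstacle'': the claim that $|_{l/2,S}\gamma$ preserves the $\C$-span of $\{\Theta_{2S,\Lambda_2,h}\}_{h\in\Lambda_1/\Lambda_2}$ with scalar coefficients for \emph{every} $\gamma\in\Sp_n(F)$. This is not a fact awaiting verification; it is false as soon as $\gamma$ is not integral. Take $\gamma=\diag[2\cdot 1_n,\tfrac12 1_n]\in\Sp_n(F)$, so that $\gamma(\tau,w)=(4\tau,2w)$. A direct substitution gives, up to the nonzero constant $2^{nld/2}$,
\begin{equation*}
\Theta_{2S,\Lambda_2,h}|_{l/2,S}\gamma \;=\; 2^{nld/2}\,\Theta_{2S,\,2\Lambda_2,\,2h},
\end{equation*}
the theta series attached to the \emph{smaller} lattice $2\Lambda_2$. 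Its Fourier support in the $w$-variable is $2S(2h+2\Lambda_2)$, a proper subset of the single coset $2S(2h+\Lambda_2)$, whereas any combination $\sum_{h'}c_{h'}\Theta_{2S,\Lambda_2,h'}$ --- with scalar, or even holomorphic-function, coefficients $c_{h'}$ --- carries one and the same coefficient along the whole of each coset $2S(h'+\Lambda_2)$ (and the exponentials never vanish). Hence the slashed theta series lies outside the span, with no choice of $W(\gamma)_{h,h'}$ whatsoever. The Weil-representation transformation law you invoke is available only for $\gamma$ in the integral group $\Sp_n(\f{o})$ (Ziegler/Eichler--Zagier type formulas), so your argument as written establishes the cusp condition only at integral $\gamma$, while the definition of a cusp form quantifies over all $g\in G^n(F)$.

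The repair is suggested by the counterexample itself, which lies in the Siegel parabolic $P^n(F)$: writing $g=\gamma_0 p$ with $\gamma_0\in\Sp_n(\f{o})$ and $p=\spmatrix{a & b \\ 0 & \transpose{a}^{-1}}\in P^n(F)$ (possible over $\Q$; over a general totally real $F$ this decomposition needs extra care), the slash by $p$ merely re-indexes the Fourier expansion by the invertible substitution $(t,r)\mapsto(\transpose{a}ta,\,ra)$ up to nonzero constants, which preserves positive definiteness of $\spmatrix{S & r \\ \transpose{r} & t}$; so the condition at $g$ reduces to the condition at $\gamma_0$, where your Weil-representation argument and the stability of half-integral weight cusp forms are legitimate. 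For comparison, the paper sidesteps the scalar claim by applying the first statement of the theorem to $\Theta_{2S,\Lambda_2,h_1}|_{k,S}\gamma$ itself, writing it as $\sum_{h_2}f_{h_1,h_2}\Theta_{2S,\Lambda_2,h_2}$ with $f_{h_1,h_2}\in M^n_{k-l/2}(K)$ holomorphic modular forms rather than constants, and then using that a cusp form times a holomorphic form is cuspidal; but note that this too presupposes the slashed theta series has the standard elliptic (Heisenberg) transformation behaviour, which again holds for integral $\gamma$ and fails for the $\gamma$ above, so some reduction of the above kind is the ingredient genuinely needed to reach all rational $g$ in either version of the argument.
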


\begin{rem} \label{Condition $n=1$}We remark here that the assumption of $n>1$ or $F \neq \mathbb{Q}$ is needed to guarantee that the $f_h$'s are holomorphic at the cusps, which follows from the K\"{ocher} principle. However, even in the case of $F=\mathbb{Q}$ and $n=1$, if we take $\b{f}$ to be of trivial level, then the $f_h$'s are holomorphic at infinity (see for example \cite[page 59]{EZ}).
\end{rem}

\begin{proof}[Proof of Theorem \ref{Structure of holomorphic Siegel-Jacobi}] The first statement is \cite[Proposition 3.5]{Sh78} for $F=\mathbb{Q}$ and it easily generalizes to the case of any totally real field. We explain the statement about  cusp forms. 

Consider first expansions around the cusp at infinity. Fix $h\in \Lambda_1/\Lambda_2$ and let $f_h(\tau) = \sum_{t_2 >0} c(t_2) \mathbf{e}_{\a}(\tr(t_2\tau))$. It is known that Fourier coefficients $c(t_1,r)$ of a Jacobi theta series  
\[
\Theta_{2S,\Lambda_2,h}(\tau,w) = \sum_{t_1,r} c(t_1,r) \mathbf{e}_{\a}(\tr(t_1\tau)) \mathbf{e}_{\a}(\tr(\T{r}w))
\]
are nonzero only if $4t_1 = \transpose{r} S^{-1} r$ (see \cite[p. 210]{Z89}). Hence, the coefficients of
\[
f_h(\tau) \Theta_{2S,\Lambda_2,h}(\tau,w) = \sum_{t,r}\left( \sum_{t_1 + t_2 = t} c(t_1,r) c(t_2)\right) \mathbf{e}_{\a}(\tr(t\tau)) \mathbf{e}_{\a}(\tr(\T{r}w))
\] 
are nonzero only if $4t = 4(t_1+t_2) = \transpose{r} S^{-1}r + 4t_2 > \transpose{r} S^{-1}r$. This means that the function $f_h(\tau) \Theta_{2S,\Lambda_2,h}(\tau,w)$ satisfies cuspidality condition at infinity. 

Now let $\gamma$ be any element in $\Sp_n(F)$. The first statement in the Theorem states that for every $h_1 \in \Lambda_1/\Lambda_2$ there exist $f_{h_1,h_2}\in M_{k-l/2}^n(K),h_2\in\Lambda_1/\Lambda_2$, such that
\[
\Theta_{2S,\Lambda_2,h_1}|_{k,S}\gamma (\tau,w)= \sum_{h_2} f_{h_1,h_2}(\tau) \Theta_{2S, \Lambda_2,h_2} (\tau,w).
\]
Hence, for some cusp forms $f_{h_1}\in S_{k-l/2}^n(K)$,
\begin{align*}
\b{f}|_{k,S}\gamma (\tau,w) &:=  \sum_{h_1 } f_{h_1}|_{k}\gamma(\tau) \left(\sum_{h_2} f_{h_1,h_2}(\tau)\Theta_{2S, \Lambda_2,h_2} (\tau,w)\right)\\
&=\sum_{h_2} \left(\sum_{h_1} f_{h_1}|_{k}\gamma(\tau) f_{h_1,h_2}(\tau) \right) \Theta_{2S, \Lambda_2,h_2} (\tau,w).
\end{align*}
The same argument as used for the cusp at infinity implies that the functions $\b{f}|_{k,S}\gamma (\tau,w)$ and $\sum_{h_1} f_{h_1}|_{k}\gamma (\tau)f_{h_1,h_2}(\tau)$ are cuspidal. This finishes the proof.
\end{proof}

Note that the above theorem does not state that  $\Phi^{-1}\left( \bigoplus_{h \in \Lambda_1/\Lambda_2} S^n_{k-l/2}(K) \right) = S^n_{k,S}(K)$.
For this reason we make the following definition.

\noindent \textbf{Property A.} We say that a cusp form $\b{f} \in S_{k,S}^n(K)$ has the Property A if 
\[
\Phi(\b{f}) \in \bigoplus_{h \in \Lambda_1/\Lambda_2} S^n_{k-l/2}(K).
\]
 \textbf{Examples of Siegel-Jacobi forms that satisfy the Property A:} \label{ex:property A}
\begin{enumerate}
\item  Siegel-Jacobi forms over a field $F$ of class number one, and with trivial level, i.e. with $\mathfrak{c} = \mathfrak{o}$. Note that in this situation there is only one cusp. Then, keeping the notation as in the proof of the theorem above we need to verify that if $\b{f}(\tau,w) = \sum_{t,r} c_{\mathbf{f}}(t,r) \mathbf{e}_{\a}(\tr(t\tau)) \mathbf{e}_{\a}(\tr(\T{r}w))$ with $4t > \transpose{r}S^{-1}r$ whenever $c(t,r)\neq 0$, then all the $f_{h}$ have to be cuspidal. Observe first that if $h_1,h_2 \in \Lambda_1/\Lambda_2$ are different, $\Theta_{2S,\Lambda_2,h_1}(\tau,w) = \sum_{t,r} c_1(t,r) \mathbf{e}_{\a}(\tr(t\tau)) \mathbf{e}_{\a}(\tr(\T{r}w)),$ and $\Theta_{2S,\Lambda_2,h_2}(\tau,w) = \sum_{t,r} c_2(t,r) \mathbf{e}_{\a}(\tr(t\tau)) \mathbf{e}_{\a}(\tr(\T{r}w))$, then there is no $r$ such that at the same time $c_1(t,r) \neq 0$ and $c_2(t,r) \neq 0$. Indeed, if it was not the case then there would be $\lambda_1,\lambda_2 \in \Lambda_2$ such that $r = 2S (\lambda_1 + h_1)$ and $r = 2 S (\lambda_2 + h_2)$, that is, $\lambda_1 + h_1 = \lambda_2 + h_2$ or, equivalently, $h_1 - h_2 \in \Lambda_2$; contradiction. Hence, for any given $r$ there is a unique $h \in \Lambda_1/\Lambda_2$ such that $\Theta_{2S,\Lambda_2,h}$ has a nonzero coefficient $c(t,r)$. This means that there exists a unique $h$ such that $c_{\mathbf{f}}(t,r)$ is the Fourier coefficient of $f_h(\tau) \Theta_{2S,\Lambda_2,h}(\tau,w) = \sum_{t,r} \sum_{t_1 + t_2 = t} c(t_1,r) c(t_2) \mathbf{e}_{\a}(\tr(t\tau)) \mathbf{e}_{\a}(\tr(\T{r}w))$. But then $\transpose{r}S^{-1} r < 4t = 4(t_1+t_2) = \transpose{r}S^{-1}r + 4t_2$ and so $t_2 > 0$, which proves that $f_h$ is cuspidal. 

\item  Siegel-Jacobi forms of index $S$ such that $\det(2S) \in \mathfrak{o}^{\times}$, as in this case the lattices $\Lambda_1$ and $\Lambda_2$ from Theorem \ref{Structure of holomorphic Siegel-Jacobi} are equal.
\item Siegel-Jacobi forms of non-parallel weight, that is, if there exist distinct $v,v' \in \mathbf{a}$ such that $k_v \neq k_{v'}$. Indeed, in this case $M^n_{k-l/2}(K) = S^n_{k-l/2}(K)$ for all $h \in \Lambda_1/\Lambda_2$ (see \cite[Proposition 10.6]{Sh96}).
\end{enumerate}

\noindent \textbf{Remark:}\label{rem:property A} It is tempting to claim that Property A always holds. Actually the first example above suggests a possible way of establishing it for all Jacobi forms. However in the presence of more than one cusps, and of non-trivial index $[\Lambda_1:\Lambda_2]$ one needs to understand the behavior of the theta series $\Theta_{2S,\Lambda_2,h}(\tau,w)$ at all cusps, which seems to be quite hard in general. \newline

Let us now explain the significance of the Property A. Recall first that we have defined a Petersson inner product $<\b{f},\b{g}>$ when $\b{f},\b{g} \in M^n_{k,S}(K)$ and one of them, say, $\b{f}$ is cuspidal. If $\b{f}$ satisfies the Property A, then we claim that
\[
<\b{f},\b{g}> = N(\det(4S))^{-n/2}\sum_{h \in \Lambda_1/\Lambda_2} <f_h, g_h>.
\]
Indeed, as in \cite[Lemma 3.4]{Z89},  
\[
<\b{f},\b{g}> = N(\det(4S))^{-n/2}  vol(A)^{-1}\int_{A} \sum_{h \in \Lambda_1/\Lambda_2} f_h(\tau) \overline{g_h(\tau)} \det(\Im(\tau))^{k-l/2 -(n+1)}d\tau ,  
\]
where $A = \Gamma \back \mathbb{H}^{\mathbf{a}}_n$ and a congruence subgroup $\Gamma$ is deep enough. We obtain the claimed equality after exchanging the order of integration and summation. This can be done exactly because each $f_h$ is cuspidal, which makes each individual integral well defined.

\begin{lem} \label{projection to cuspidal part} Assume that $n>1$ or $F \neq \mathbb{Q}$ and that $\b{f} \in S_{k,S}^{n}(\overline{\mathbb{Q}})$ satisfies the Property A and one of the following two conditions holds:
\begin{enumerate}
\item[(i)] there exist $v,v' \in \mathbf{a}$ such that $k_v \neq k_{v'}$;
\item[(ii)] $k = \mu \mathbf{a} = (\mu,\ldots,\mu) \in \mathbb{Z}^{\mathbf{a}}$, with $\mu \in \mathbb{Z}$ depending on $n$ and $F$ in the following way:\\
\begin{tabular}{cccccccc}
$n >2$ & & $n =2, F=\mathbb{Q}$ & & $n =2, F\neq\mathbb{Q}$ & & $n=1$ & .\\
$\mu > 3n/2 +l/2$ & & $\mu > 3$ & & $\mu > 2$ & & $\mu \geq 1/2$ &
\end{tabular}
\end{enumerate} 
Then for any $\b{g}\in M_{k,S}^{n}(\overline{\mathbb{Q}})$ there exists $\widetilde{\b{g}} := \mathfrak{q}(\b{g}) \in S_{k,S}^{n}(\overline{\mathbb{Q}})$ such that 
\[
<\b{f},\b{g}> = <\b{f},\widetilde{\b{g}}>.
\]
\end{lem}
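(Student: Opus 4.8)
The plan is to transport the problem from Siegel-Jacobi forms to ordinary Siegel modular forms via the theta-decomposition isomorphism $\Phi$ of Theorem \ref{Structure of holomorphic Siegel-Jacobi}, invoke the existence of a projection onto cusp forms in the Siegel case, and then transport back. First I would apply $\Phi$ to write $\b{g} \mapsto (g_h)_{h \in \Lambda_1/\Lambda_2}$ with each $g_h \in M^n_{k-l/2}(\overline{\mathbb{Q}})$, and likewise $\b{f} \mapsto (f_h)_h$. Since $\b{f}$ satisfies Property A, each $f_h$ lies in $S^n_{k-l/2}(\overline{\mathbb{Q}})$. The key reduction is the inner-product formula stated just before the lemma, namely
\[
<\b{f},\b{g}> = N(\det(4S))^{-n/2}\sum_{h \in \Lambda_1/\Lambda_2} <f_h, g_h>,
\]
which is valid precisely because Property A makes each $f_h$ cuspidal, so each summand $<f_h,g_h>$ is individually well-defined.

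Next I would use the existence of a holomorphic (orthogonal) projection $\mathfrak{p}\colon M^n_{k-l/2} \to S^n_{k-l/2}$ onto the space of cusp forms that is \emph{defined over $\overline{\mathbb{Q}}$} and satisfies $<f_h, g_h> = <f_h, \mathfrak{p}(g_h)>$ for every cusp form $f_h$. This is exactly where the weight hypotheses in cases (i) and (ii) enter: in case (i) the non-parallel weight forces $M^n_{k-l/2}(\overline{\mathbb{Q}}) = S^n_{k-l/2}(\overline{\mathbb{Q}})$ (as noted in Example 3 of Property A), so the projection is the identity and there is nothing to prove beyond the decomposition; in case (ii) the lower bound on $\mu$ guarantees that the decomposition $M^n_{k-l/2} = S^n_{k-l/2} \oplus \mathcal{E}^n_{k-l/2}$ into cusp forms and an Eisenstein-type orthogonal complement holds, that the complement is orthogonal to cusp forms under the (regularized, if needed) Petersson pairing, and crucially that this decomposition respects $\overline{\mathbb{Q}}$-rational structure. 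I would set $\widetilde{g_h} := \mathfrak{p}(g_h) \in S^n_{k-l/2}(\overline{\mathbb{Q}})$ and then define $\widetilde{\b{g}} := \Phi^{-1}\bigl((\widetilde{g_h})_h\bigr)$.

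Finally I would verify that $\widetilde{\b{g}}$ has the required properties. By the last assertion of Theorem \ref{Structure of holomorphic Siegel-Jacobi}, $\Phi^{-1}$ maps $\bigoplus_h S^n_{k-l/2}(\overline{\mathbb{Q}})$ into $S^n_{k,S}(\overline{\mathbb{Q}})$, so $\widetilde{\b{g}} = \mathfrak{q}(\b{g}) \in S^n_{k,S}(\overline{\mathbb{Q}})$ as claimed. Applying the inner-product formula again and using $<f_h,g_h> = <f_h,\widetilde{g_h}>$ termwise gives
\[
<\b{f},\b{g}> = N(\det(4S))^{-n/2}\sum_{h} <f_h, g_h> = N(\det(4S))^{-n/2}\sum_{h} <f_h, \widetilde{g_h}> = <\b{f},\widetilde{\b{g}}>.
\]

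The main obstacle I anticipate is not the Jacobi-to-Siegel transport, which is essentially bookkeeping once $\Phi$ is in hand, but rather securing the \emph{arithmetic} holomorphic projection in the Siegel case: one needs that the orthogonal projection onto cusp forms preserves rationality over $\overline{\mathbb{Q}}$ and is compatible with the pairing against cusp forms, under the stated weight bounds. This is the role played by the delicate weight inequalities in (ii), which must be strong enough to invoke Shimura's results on the arithmeticity of the cuspidal projection for Siegel (and half-integral weight) modular forms; verifying that the case divisions for $n$ and $F$ land in the range where those results apply is the technical heart of the argument.
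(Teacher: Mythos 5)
Your proposal is correct and follows essentially the same route as the paper: decompose via the theta-expansion isomorphism $\Phi$, use Property A to reduce the Petersson pairing to a sum of Siegel-modular pairings $\sum_h <f_h,g_h>$, invoke Shimura's arithmetic projection (Theorem 27.14 of \cite{Sh00}) to replace each $g_h$ by a cusp form $\widetilde{g}_h$ over $\overline{\mathbb{Q}}$, and pull back with $\Phi^{-1}$, handling the non-parallel-weight case (i) by noting there is no Eisenstein part. The only cosmetic difference is that the paper dispenses with case (i) in one line rather than folding it into the projection formalism.
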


\begin{proof}  There is nothing to show in the case of non-parallel weight, since as it was mentioned above there is no (holomorphic) Eisenstein part in this case. In the parallel weight case, as $\b{f}$ has the Property A, 
$<\b{f},\b{g}> = N(\det(4S))^{-n/2} \sum_{h \in \Lambda_1/\Lambda_2} <f_h, g_h>$. 
It is known (see \cite[Theorem 27.14]{Sh00}) that under the assumptions stated in (ii), there exists a projection $\widetilde{\mathfrak{q}} : M_{k-l/2}^n(\overline{\mathbb{Q}}) \rightarrow S_{k-l/2}^n(\overline{\mathbb{Q}})$ such that $<f,g> = <f,\widetilde{g}>$ for all $f\in S_{k-l/2}^n(\overline{\mathbb{Q}})$, $g\in M_{k-l/2}^n(\overline{\mathbb{Q}})$. Then, if we put $\widetilde{g}_h := \widetilde{\mathfrak{q}}(g_h)$ for all $h \in \Lambda_1 /\Lambda_2$, we get 
\[
<\b{f},\b{g}> = N(\det(4S))^{-n/2} \sum_{h \in \Lambda_1/\Lambda_2} <f_h, g_h> = N(\det(4S))^{-n/2} \sum_{h \in \Lambda_1/\Lambda_2} <f_h, \widetilde{g}_h> .
\]
In particular, if we set $\widetilde{\b{g}} := \Phi^{-1}((\widetilde{g}_h)_h)$, we obtain the statement of the lemma.
\end{proof}

Now, fix a fractional ideal $\f{b}$ and an integral ideal $\f{c}$ of $F$, set $\b{\Gamma}:=\b{\Gamma}_1(\f{c})$ and let 
$\b{f} \in S_{k,S}^{n}(\b{\Gamma},\overline{\mathbb{Q}})$ be a non-zero Siegel-Jacobi form. Furthermore, assume that $\b{f}$ is an eigenfunction of the operators $T(\f{a})$ for all integral ideals $\f{a}$, and write $\b{f} | T(\f{a}) = \lambda(\f{a}) \b{f}$. We define the space
\[
V(\b{f}) := \{ \widetilde{\b{f}} \in S_{k,S}^{n}(\b{\Gamma},\overline{\mathbb{Q}}) :\widetilde{\b{f}} | T(\f{a}) = \lambda(\f{a}) \widetilde{\b{f}}\mbox{ for all }\f{a}\} .
\]

We are now ready to state the main theorem of this paper on algebraic properties of 
\[
\b{\Lambda}(s,\b{f},\chi) = L(2s-n-l/2,\b{f},\chi) \begin{cases} L_{\mathfrak{c}}(2s-l/2,\chi \psi_S)  & \hbox{if } l \in 2\mathbb{Z},\\
1 & \hbox{if } l \not \in 2\mathbb{Z}.
 \end{cases} 
\]

Recall that the $L$-function $L(s,\b{f},\chi)$ is defined only when the index matrix $S$ of $\b{f}$ satisfies the property $M_{\f{p}}^+$, as stated in Theorem \ref{Euler Product Representation}.

\begin{thm}\label{Main Theorem on algebraicity}
Assume $n >1$ or $F\neq \mathbb{Q}$, and that $S$ satisfies the property $M_{\f{p}}^+$ for all prime ideals $\mathfrak{p}$ of $F$ prime to $\mathfrak{c}$. Let $\chi$ be a Hecke character of $F$ such that $\chi_{\a}(x) =\sgn_{\a}(x)^k$, and $0 \neq \b{f} \in S_{k,S}^{n}(\b{\Gamma},\overline{\mathbb{Q}})$ an eigenfunction of all $T(\f{a})$. Set $\mu := \min_v{k_v}$ and assume that 
\begin{enumerate}
\item $\mu > 2n +l +1$,
\item  Property A holds for all $\widetilde{\b{f}} \in V(\b{f})$,
\item $k_v \equiv k_{v'} \mod{2}$ for all $v, v' \in \mathbf{a}$.
\end{enumerate}
Let $\sigma \in  \mathbb{Z}$ be such that 
\begin{enumerate}
\item[(i)] $2n+1 - (k_v - l/2)  \leq \sigma -l/2 \leq k_v -l/2$ for all $v \in \mathbf{a}$, 
\item[(ii)] $| \sigma - \frac{l}{2} - \frac{2n+1}{2} | + \frac{2n+1}{2} - (k_v - l/2) \in 2 \mathbb{Z}$ for all $v \in \mathbf{a}$,
\item[(iii)] $k_v > l/2 + n(1+k_v - l/2 -|\sigma - l/2 - (2n+1)/2| - (2n+1)/2)$ for all $v \in \mathbf{a}$,
\end{enumerate}
but exclude the cases
\begin{enumerate}
\item[(a)] $\sigma = n+1+ l/2$, $F = \mathbb{Q}$ and $\chi^2 \psi_i^2 = 1$ for some $\psi_i$,
\item[(b)] $\sigma = l/2$, $\mathfrak{c} = \mathfrak{o}$ and $\chi \psi_S \psi_i = 1$ for some $\psi_i$,
\item[(c)] $0 < \sigma - l/2 \leq n$, $\mathfrak{c} = \mathfrak{o}$ and $\chi^2 \psi_i^2 = 1$ for some $\psi_i$.
\item[(d)] $\sigma \leq l +n$ in case $F$ has class number larger than one.
\end{enumerate}

Under these conditions
\[
\frac{\b{\Lambda}(\sigma/2,\b{f},\chi)}{\pi^{e_{\sigma}} <\b{f},\b{f}>} \in \overline{\mathbb{Q}},
\]
where  
$$e_{\sigma} =  n \sum_{v \in \mathbf{a}} (k_v - l + \sigma) - de,\quad e := \begin{cases}  n^2 + n - \sigma + l/2, & \mbox{if } l  \in 2 \mathbb{Z} \mbox{ and } \sigma \geq 2n + l/2, \\ n^2, & \mbox{otherwise}. \end{cases} $$

\end{thm}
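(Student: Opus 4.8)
The plan is to run Shimura's strategy for critical $L$-values through the Jacobi doubling identity of Theorem \ref{thm:dm_identity}, turning that analytic identity into an arithmetic one by replacing the Siegel-type Jacobi Eisenstein series with an arithmetic holomorphic form having the same Petersson pairing against $\b{f}$. First I would specialize \eqref{eq:dm_identity} to $s=\sigma/2$. Setting
\[
\Psi(z_1) := <(E^{2n}|_{k,S}\b{\rho})(\diag[z_1,z_2],\sigma/2;\chi),(\b{f}|_{k,S}\b{\eta}_n)^c(z_2)>_{z_2},
\]
the identity reads $\Psi = C\cdot\b{\Lambda}(\sigma/2,\b{f},\chi)\,\b{f}$, where the scalar $C$ collects $\nu_{\f{e}}c_{S,k}(\sigma/2-k/2)$, the factor $G(\chi,\sigma-n-l/2)$, the finite $L$-product $\Lambda^{2n}_{k-l/2,\f{c}}(\sigma/2-l/4,\chi\psi_S)$, and the elementary terms $N(\f{b})^{n\sigma}$, $\chi_{\h}(\theta)^{-n}$, $vol(A)$. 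Conditions (i)--(iii) place $\sigma/2$ at a critical point: by Shimura's evaluation of critical Dirichlet $L$-values and of the quotient of $\Gamma_n$-factors inside $c_{S,k}$, the constant $C$ is an algebraic number times an explicit power of $\pi$, while the exclusions (a)--(c) remove the values where such an $L$-factor vanishes or has a pole and (d) excludes the class-number obstruction.

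The heart of the argument is to show that $\Psi/(\pi^{a}<\b{f},\b{f}>)$ is an \emph{arithmetic} cusp form for a suitable $a\in\N$. By conditions (i)--(iii) the series $E^{2n}(\cdot;\sigma/2)$ is nearly holomorphic at this point, and by the arithmeticity of the Jacobi Eisenstein series (Theorem \ref{algebraic properties of Eisenstein series}) one may choose $a$ so that $\mathcal{E}^*:=\pi^{-a}E^{2n}(\cdot;\sigma/2)$ is nearly holomorphic and defined over $\overline{\mathbb{Q}}$. Since $(\b{f}|_{k,S}\b{\eta}_n)^c$ is cuspidal, replacing $\mathcal{E}^*$ by its holomorphic projection leaves the $z_2$-pairing unchanged; here assumption (2) (Property A for every element of $V(\b{f})$) together with Theorem \ref{Structure of holomorphic Siegel-Jacobi} and Lemma \ref{projection to cuspidal part} lets me realize this projection through the theta decomposition and Shimura's holomorphic projection for Siegel forms, so that $\mathfrak{q}(\mathcal{E}^*)$ is an arithmetic holomorphic cusp form on $\mathcal{H}_{2n,l}$.

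Restricting $\mathfrak{q}(\mathcal{E}^*)$ to the diagonal and using the general decomposition of a pullback, I would write $\mathfrak{q}(\mathcal{E}^*)(\diag[z_1,z_2])=\sum_j \b{p}_j(z_1)\b{q}_j(z_2)$ with arithmetic cusp forms $\b{p}_j,\b{q}_j$, the $\b{q}_j$ chosen to be Hecke eigenforms. Then $\pi^{-a}\Psi=\sum_j \b{p}_j\,<\b{q}_j,(\b{f}|_{k,S}\b{\eta}_n)^c>_{z_2}$, and by Hecke orthogonality only those $j$ with $\b{q}_j$ in the eigenspace of $(\b{f}|_{k,S}\b{\eta}_n)^c$ survive. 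For those the ratio $<\b{q}_j,(\b{f}|_{k,S}\b{\eta}_n)^c>/<\b{f},\b{f}>$ is algebraic: within a single Hecke eigenspace the Petersson pairing against a fixed arithmetic eigenform is algebraic up to $<\b{f},\b{f}>$, using that $\b{\eta}_n$ and $c$ are Petersson isometries preserving arithmeticity and, via Proposition \ref{Behaviour under complex conjugation}, the Hecke eigensystem. Hence $\Psi/(\pi^{a}<\b{f},\b{f}>)=\sum_j \b{p}_j\cdot(\text{algebraic})$ is an arithmetic cusp form.

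Finally I would combine the two descriptions of $\Psi$. Since $\Psi = C\,\b{\Lambda}(\sigma/2,\b{f},\chi)\,\b{f}$ and $\b{f}\in S^n_{k,S}(\overline{\mathbb{Q}})$ possesses a nonzero algebraic Fourier coefficient, the arithmeticity of $\Psi/(\pi^{a}<\b{f},\b{f}>)$ forces
\[
\frac{C\,\b{\Lambda}(\sigma/2,\b{f},\chi)}{\pi^{a}<\b{f},\b{f}>}\in\overline{\mathbb{Q}};
\]
absorbing the transcendental content of $C$ then yields the claim, and a bookkeeping of the $\Gamma_n$-quotient in $c_{S,k}$ against the critical values of $\Lambda^{2n}_{k-l/2,\f{c}}$ in the two parity cases produces the explicit $e_{\sigma}$. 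The main obstacle is the middle step: one must know both that the critical-point Eisenstein series is nearly holomorphic and arithmetic after normalization, and that holomorphic projection stays within arithmetic cusp forms in the Jacobi setting. This is exactly the role of Property A, which reduces the Jacobi projection to the Siegel one through the theta decomposition where Shimura's machinery applies; without it (as in Theorem \ref{Main Theorem on algebraicity v2}) a genuinely different Poincar\'e-series and reproducing-kernel construction is needed. A secondary difficulty is the precise determination of $e_{\sigma}$, i.e. matching the $\pi$-content of $c_{S,k}$ and of the finite $L$-factors across the even and odd $l$ cases.
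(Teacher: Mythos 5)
Your skeleton (doubling identity at $s=\sigma/2$, near-holomorphy and $\overline{\mathbb{Q}}$-rationality of the normalized Eisenstein series, projection to cusp forms via Property A, comparison of Fourier coefficients, division by $<\b{f},\b{f}>$) is the paper's, but two steps are broken, and the first is fatal as written. You replace $\mathcal{E}^*$ by a holomorphic--cuspidal projection $\mathfrak{q}(\mathcal{E}^*)$ \emph{on} $\mathcal{H}_{2n,l}$ and claim the $z_2$-pairing is unchanged. The projection property $<F,\b{g}>=<F,\mathfrak{p}(\b{g})>$ (Lemmas \ref{holomorphic_projection} and \ref{projection to cuspidal part}) concerns the full degree-$2n$ Petersson product against degree-$2n$ cusp forms $F$; it says nothing about the partial pairing, in the variable $z_2$ alone, of the diagonal restriction against a degree-$n$ cusp form. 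In fact it is false here: by the standard unfolding argument $E^{2n}(\cdot,s;\chi)$ is orthogonal to every degree-$2n$ cusp form, so the cuspidal part of $\mathfrak{p}(\mathcal{E}^*)$ pairs to zero with all of them and your $\mathfrak{q}(\mathcal{E}^*)$ is essentially zero; if your replacement were legitimate, the doubling identity would read $0=C\,\b{\Lambda}(\sigma/2,\b{f},\chi)\,\b{f}$, contradicting \eqref{eq:non-vanishing of L-values}. The projection must be applied \emph{after} restriction, in degree $n$: the paper restricts $\mathcal{E}^*|_{k,S}\b{\rho}$ to the diagonal, writes it as $\sum_i\b{f}_i(z_1)\b{g}_i(z_2)$ with $\b{f}_i,\b{g}_i\in N^{n,r}_{k,S}(\overline{\mathbb{Q}})$ (Lemma \ref{diagonal restriction preserves algebraicity}), extracts finite combinations $\b{h}_{r,t}=\sum_i\alpha_{i,r,t}\b{g}_i$ by comparing Fourier coefficients in $z_1$, and only then applies $\mathfrak{p}^0=\mathfrak{q}\circ\mathfrak{p}$ to $\b{h}_{r,t}|_{k,S}\b{\eta}_n$, where the pairing against the Property-A cusp form $\tilde{\b{f}}$ really is preserved.

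The second gap is your closing step: ``within a single Hecke eigenspace the Petersson pairing against a fixed arithmetic eigenform is algebraic up to $<\b{f},\b{f}>$'' is not a formal consequence of $\b{\eta}_n$ being an isometry together with Proposition \ref{Behaviour under complex conjugation}; when $\dim_{\overline{\mathbb{Q}}}V(\b{f})>1$ the Petersson matrix of an arithmetic basis is a priori a matrix of unrelated transcendental numbers. This assertion is exactly Theorem \ref{ratio of inner products}, the technical heart of the paper, and it is proved there by a \emph{second} run of the doubling identity---at the auxiliary point $\mu=\min_v k_v$, with an auxiliary character chosen so that $\chi^2\neq1$ and $G(\chi,\mu-n-l/2)\in\overline{\mathbb{Q}}^\times$---followed by a spanning argument: the projected forms $\widetilde{\b{h}}_{r,t}$ span $V(\b{f})$, since any $\tilde{\b{f}}_1\in V(\b{f})$ orthogonal to all of them would have every Fourier coefficient of $\tilde{\b{f}}_1^c$ equal to zero, by the nonvanishing of the $L$-value. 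If you repair the first step as above and replace the eigenspace assertion by such a proof (or an appeal to Theorem \ref{ratio of inner products}), your argument becomes the paper's; your $\pi$-bookkeeping ($c_{S,k}(\sigma/2-k/2)\in\pi^{dn(n+1)/2}\overline{\mathbb{Q}}^\times$ cancelling against $vol(A)$, with $e_\sigma=\beta$ coming from Theorem \ref{algebraic properties of Eisenstein series}) is correct.
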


This theorem will be proved at the end of the next section.
First we need to introduce the notion of nearly holomorphic Siegel-Jacobi modular forms $N_{k,S}^{n,r}(\b{\Gamma})$ for $r \in \mathbb{Z}^{\mathbf{a}}$.
\section{Nearly holomorphic Siegel-Jacobi modular forms and algebraicity of special L-values}\label{sec:nhol}

\begin{defn} A $C^{\infty}$ function $\b{f} (\tau,w): \mathcal{H}_{n,l} \rightarrow \mathbb{C}$ is said to be a nearly holomorphic Siegel-Jacobi modular form (of weight $k$ and index $S$) for the congruence subgroup $\b{\Gamma}$ if 
\begin{enumerate}
\item $\b{f}$ is holomorphic with respect to the variable $w$ and nearly holomorphic with respect to the variable $\tau$, that is, $\b{f}$ as a function of $\tau$ belongs to the space $N^r(\mathbb{H}_n^d)$, $r \in \mathbb{Z}_+^d$, where the space  $N^r(\mathbb{H}_n^d)$ is defined in \cite[page 99]{Sh00};
\item $\b{f} |_{k,S} \gamma = \b{f}$ for all $\gamma \in \b{\Gamma}$.
\end{enumerate}
Actually one needs to also put the usual condition at the cusps when $n=1$ and $F = \mathbb{Q}$ but we will later restrict ourselves to the case of $n\neq 1$ or $F \neq \mathbb{Q}$ where this condition is automatic.
 
We denote this space by $N_{k,S}^{n,r}(\b{\Gamma})$ and write $N_{k,S}^{n,r} := \bigcup_{\b{\Gamma}} N_{k,S}^{n,r}(\b{\Gamma})$ for the space of all nearly holomorphic Siegel-Jacobi modular forms of weight $k$ and index $S$.
\end{defn}

It follows that $\b{f}\in N_{k,S}^{n,D}(\b{\Gamma})$ has a Fourier expansion of the form 
\begin{equation*}
\b{f}(\tau,w)=\sum_{\substack{t\in L\\ t\geq 0}}\sum_{r\in M} p_{t,r}((\Im(\tau_v)^{-1})_{v\in\a}) \mathbf{e}_{\mathbf{a}}(\tr(t\tau)) \mathbf{e}_{\mathbf{a}}(\tr(\T{r}w))
\end{equation*}
for some suitable lattices $L \subset Sym_n(F)$ and $M \subset M_{l,n}(F)$, where $p_{t,r}$ is a polynomial function on $Sym_n(F_\a)$ of total degree $D\in \mathbb{Z}_+^d$.

We note that if $\b{f} \in N_{k,S}^{n,r}$, then $\b{f}_*(\tau,v\,\Omega_{\tau}) \in N_{k}^{n,r}$, the space of nearly holomorphic Siegel modular forms, where recall $\Omega_{\tau} := \transpose{(\tau \,\,\,1_n)}$, and $v \in M_{l,2n}(F)$. The next theorem, which has been established in \cite{BMana}, extends Theorem \ref{Structure of holomorphic Siegel-Jacobi} to the nearly-holomorphic situation.

\begin{thm} \label{Structure of nearly holomorphic Siegel-Jacobi} Assume that $n > 1$ or $F \neq \mathbb{Q}$. Let $A \in \GL_l(F)$ be such that $A S\, \transpose{A} = \diag[s_1, \ldots, s_l ]$, and define the lattices $\Lambda_1 := A M_{l,n}(\mathfrak{o}) \subset M_{l,n}(F)$ and $\Lambda_2 := 2 \diag[s_1^{-1}, \ldots, s_l^{-1}] M_{l,n}(\mathfrak{o}) \subset M_{l,n}(F)$. Then there is an isomorphism
\[
\Phi: N^{n,r}_{k,S}\cong \bigoplus_{h \in \Lambda_1/\Lambda_2} N^{n,r}_{k-l/2}
\] 
given by $\b{f} \mapsto \left( f_{h}\right)_h$, where the $f_h \in N^{n,r}_{k-l/2}$ are defined by the expression
\[
\b{f}(\tau,w) = \sum_{h \in \Lambda_1/\Lambda_2} f_h(\tau) \Theta_{2S,\Lambda_2,h}(\tau,w).
\]
\end{thm}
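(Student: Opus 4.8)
The plan is to mirror the proof of Theorem~\ref{Structure of holomorphic Siegel-Jacobi}, using that the theta decomposition is controlled entirely by the behaviour of $\b{f}$ in the variable $w$ --- which is \emph{identical} for holomorphic and nearly holomorphic forms --- whereas near-holomorphy is a condition on the variable $\tau$ alone. I assume throughout $n>1$ or $F\neq\mathbb{Q}$, so the K\"ocher principle removes any condition at the cusps. First I would produce the decomposition $\b{f}(\tau,w)=\sum_{h\in\Lambda_1/\Lambda_2}f_h(\tau)\,\Theta_{2S,\Lambda_2,h}(\tau,w)$: by definition an element $\b{f}\in N^{n,r}_{k,S}(\b{\Gamma})$ is holomorphic in $w$ and invariant under the Heisenberg part $C[\f{o},\f{b}^{-1},\f{b}^{-1}]$ of $\b{\Gamma}$, and these are precisely the two hypotheses that Shimura's argument (\cite[Proposition 3.5]{Sh78} and its totally real generalization recalled in Theorem~\ref{Structure of holomorphic Siegel-Jacobi}) uses to obtain the expansion. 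Since that argument is a Fourier analysis in $w$ and never invokes holomorphy in $\tau$, it applies verbatim and yields well-defined coefficient functions $f_h(\tau)$.

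Next I would check that each $f_h$ is nearly holomorphic of the same degree $r$; this is the only point at which near-holomorphy genuinely enters. Write the Fourier expansion $\b{f}(\tau,w)=\sum_{t,\rho}p_{t,\rho}((\Im(\tau_v)^{-1})_{v\in\a})\,\mathbf{e}_{\mathbf{a}}(\tr(t\tau))\,\mathbf{e}_{\mathbf{a}}(\tr(\transpose{\rho}w))$, where $\rho$ denotes the Fourier index dual to $w$ (to avoid a clash with the near-holomorphy degree $r$) and each $p_{t,\rho}$ is a polynomial of total degree at most $r$. As recalled in the proof of Theorem~\ref{Structure of holomorphic Siegel-Jacobi}, the coefficient $c(t_1,\rho)$ of $\Theta_{2S,\Lambda_2,h}$ is nonzero only when $4t_1=\transpose{\rho}S^{-1}\rho$, for each $\rho$ at most one class $h$ occurs (namely $\rho\equiv 2Sh \bmod 2S\Lambda_2$), and exactly one lattice point then contributes; hence the coefficient of $\mathbf{e}_{\mathbf{a}}(\tr(\transpose{\rho}w))$ in $\sum_h f_h\,\Theta_{2S,\Lambda_2,h}$ is the single term $f_{h(\rho)}(\tau)\,\mathbf{e}_{\mathbf{a}}(\tr(t_1(\rho)\tau))$. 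Comparing with the expansion of $\b{f}$ gives $f_{h(\rho)}(\tau)=\sum_t p_{t,\rho}((\Im(\tau_v)^{-1})_{v\in\a})\,\mathbf{e}_{\mathbf{a}}(\tr((t-t_1(\rho))\tau))$, so the Fourier coefficients of $f_h$ are again polynomials in $(\Im(\tau_v)^{-1})_{v\in\a}$ of degree at most $r$; by the characterization of $N^r(\mathbb{H}_n^d)$ in \cite[page 99]{Sh00} this means $f_h\in N^{n,r}$. (Alternatively one may reduce to the Siegel case via the already-noted fact that $\b{f}_*(\tau,v\,\Omega_\tau)\in N^{n,r}_k$.)

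It remains to record the modularity and to assemble the isomorphism. The weight-$(k-l/2)$ modularity of the $f_h$ follows exactly as in the holomorphic case from the transformation of the vector $(\Theta_{2S,\Lambda_2,h})_h$ under $\Sp_n$ via the associated (finite) Weil representation: invariance of $\b{f}$ under $\b{\Gamma}$ forces $(f_h)_h$ to transform as a vector-valued nearly holomorphic Siegel form of weight $k-l/2$, whence $f_h\in N^{n,r}_{k-l/2}$. The assignment $\b{f}\mapsto(f_h)_h$ is thus well-defined, and injective since $\b{f}$ is recovered from the $f_h$. For surjectivity, given $(f_h)_h\in\bigoplus_h N^{n,r}_{k-l/2}$ put $\b{f}:=\sum_h f_h\,\Theta_{2S,\Lambda_2,h}$: it is holomorphic in $w$ and, as a product of a degree-$r$ nearly holomorphic function of $\tau$ with the holomorphic theta series, nearly holomorphic of degree $r$ in $\tau$; choosing a congruence subgroup $\b{\Gamma}$ deep enough that each $f_h$ is invariant and that the theta multiplier is trivial, the index-$S$ Heisenberg invariance of the $\Theta_{2S,\Lambda_2,h}$ together with the triviality of the Weil representation on $\b{\Gamma}$ give $\b{f}|_{k,S}\gamma=\b{f}$ for $\gamma\in\b{\Gamma}$, so $\b{f}\in N^{n,r}_{k,S}$.

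The main obstacle is the second step: it is the \emph{sole} place where near-holomorphy plays a role, and the difficulty is purely a matter of keeping the polynomial degree bookkeeping honest, i.e.\ confirming that passing from $\b{f}$ to the $f_h$ neither raises nor lowers the degree $r$ and that the outcome meets Shimura's definition of $N^r(\mathbb{H}_n^d)$. The Fourier-coefficient comparison above handles this cleanly because the theta series are holomorphic and carry no dependence on $(\Im(\tau_v)^{-1})_{v\in\a}$; should one prefer to avoid explicit coefficient manipulations, the reduction to $\b{f}_*$ transfers the entire question to the established nearly holomorphic structure theory for Siegel modular forms.
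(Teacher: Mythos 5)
You should first note that this paper contains no proof of Theorem \ref{Structure of nearly holomorphic Siegel-Jacobi} to compare against: the text states it "has been established in \cite{BMana}" and moves on. The natural in-paper benchmark is therefore the proof of the holomorphic analogue, Theorem \ref{Structure of holomorphic Siegel-Jacobi}, and your argument is precisely the expected adaptation of it, and is essentially correct. The three pillars are all sound: the theta decomposition uses only holomorphy in $w$ and Heisenberg invariance, so Shimura's argument in \cite[Proposition 3.5]{Sh78} applies with merely smooth dependence on $\tau$; the support condition $4t_1=\transpose{r}S^{-1}r$ for theta coefficients, together with uniqueness of the class $h$ and of the contributing lattice point (the same counting the paper carries out in its first example of Property A), makes the $w$-Fourier comparison term-by-term and keeps the degree-$r$ bookkeeping honest in both directions; and modularity plus surjectivity via the finiteness of the Weil representation on a deep enough congruence subgroup is standard and degree-preserving.

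Two points deserve tightening. First, your inference "the Fourier coefficients of $f_h$ are polynomials in the entries of $(\Im(\tau_v)^{-1})_{v\in\a}$ of degree at most $r$, hence $f_h\in N^{n,r}$" is not the verbatim definition on \cite[page 99]{Sh00}, which asks for $f_h$ to be a polynomial in those entries whose coefficients are \emph{holomorphic functions}; converting a Fourier expansion with polynomial coefficients into that form requires a (routine but nonempty) rearrangement-and-convergence argument. A cleaner route avoids Fourier expansions of $f_h$ altogether: extract $f_h(\tau)$ as $\mathbf{e}_{\mathbf{a}}(-\tr(S[h]\tau))$ times the average of $\b{f}(\tau,u+iv_0)\,\mathbf{e}_{\mathbf{a}}(-\tr(\transpose{(2Sh)}(u+iv_0)))$ over a compact period cell in $u=\Re(w)$ at fixed $\Im(w)=v_0$; for each fixed $u$ the integrand lies in $N^r(\mathbb{H}_n^d)$ by the very definition of $N^{n,r}_{k,S}$, and that space is stable under integration over a compact parameter set and under multiplication by a nowhere-vanishing holomorphic exponential, so $f_h\in N^r$ with the same degree bound. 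Second, the parenthetical alternative via $\b{f}_*(\tau,v\,\Omega_\tau)$ should be dropped or substantiated: recovering the individual $f_h$ from finitely many such restrictions amounts to inverting a matrix of theta values, and the inverse of a matrix of nearly holomorphic entries is not nearly holomorphic, so this shortcut does not obviously return $f_h\in N^{n,r}$. Neither point affects the correctness of your main line of argument.
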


The above theorem immediately implies the following.

\begin{cor}\label{cor:N_finite_dim} For a congruence subgroup $\b{\Gamma}$, $N_{k,S}^{n,r}(\b{\Gamma})$ is a finite dimensional $\mathbb{C}$ vector space.
\end{cor}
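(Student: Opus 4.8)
The plan is to deduce finite-dimensionality directly from the theta decomposition of Theorem \ref{Structure of nearly holomorphic Siegel-Jacobi}, reducing the Siegel-Jacobi problem to the corresponding statement for nearly holomorphic Siegel modular forms. Throughout I would work under the standing hypothesis $n>1$ or $F \neq \mathbb{Q}$ of that theorem, so that no condition at the cusps intervenes.

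First, I would restrict the isomorphism $\Phi$ to the subspace of forms invariant under the fixed congruence subgroup $\b{\Gamma}$. The map $\b{f} \mapsto (f_h)_{h \in \Lambda_1/\Lambda_2}$ is in particular injective, so it suffices to show that its image lies in a finite-dimensional space. The key observation is that the invariance $\b{f}|_{k,S}\gamma = \b{f}$ for all $\gamma \in \b{\Gamma}$, together with the known transformation behaviour of the theta series $\Theta_{2S,\Lambda_2,h}$ under $\Sp_n(F)$ (which permutes the characteristics $h$ via a finite Weil-type representation and is genuinely invariant under a sufficiently deep congruence subgroup), forces each component $f_h$ to be invariant under a single congruence subgroup $\Gamma' \subset \Sp_n(F)$ depending only on $\b{\Gamma}$, $S$, and the lattices $\Lambda_1, \Lambda_2$; this uses the linear independence of the $\Theta_{2S,\Lambda_2,h}$ over the relevant ring of functions. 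Hence $\Phi$ embeds $N_{k,S}^{n,r}(\b{\Gamma})$ into $\bigoplus_{h \in \Lambda_1/\Lambda_2} N^{n,r}_{k-l/2}(\Gamma')$.

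Second, the index set $\Lambda_1/\Lambda_2$ is finite: $\Lambda_1$ and $\Lambda_2$ are both full lattices in $M_{l,n}(F)$, hence commensurable, and their quotient is a finite abelian group (of order a power of $N(\det(4S))$). It then remains to invoke the finite-dimensionality of each space $N^{n,r}_{k-l/2}(\Gamma')$ of nearly holomorphic Siegel modular forms of fixed weight, fixed degree bound $r$, and fixed level $\Gamma'$. This is part of the theory developed in \cite{Sh00} over totally real fields; concretely, such a form is a polynomial of bounded total degree in the entries of $(\Im(\tau))^{-1}$ with holomorphic coefficient functions, and for a fixed congruence subgroup only finitely many such coefficients are linearly independent. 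Combining these facts, the target direct sum is a finite sum of finite-dimensional $\mathbb{C}$-vector spaces, hence finite dimensional, and so is its subspace $\Phi(N_{k,S}^{n,r}(\b{\Gamma}))$. Since $\Phi$ is injective, $N_{k,S}^{n,r}(\b{\Gamma}) \cong \Phi(N_{k,S}^{n,r}(\b{\Gamma}))$ is finite dimensional.

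I expect the main subtlety to lie in the first step, namely verifying that all components $f_h$ descend to a common fixed level $\Gamma'$: this requires controlling the automorphy of the vector-valued theta system under $\b{\Gamma}$ rather than merely over all of $\Sp_n(F)$, and relies on the $\Theta_{2S,\Lambda_2,h}$ being stabilized by a deep enough congruence subgroup. The finite-dimensionality input for Siegel forms, by contrast, enters as a black-box citation to \cite{Sh00}.
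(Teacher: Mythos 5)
Your proposal is correct and takes essentially the same route as the paper's own proof: both use the theta decomposition of Theorem \ref{Structure of nearly holomorphic Siegel-Jacobi} to embed $N_{k,S}^{n,r}(\b{\Gamma})$ into a finite direct sum $\bigoplus_h N^{n,r}_{k-l/2}(\Gamma_h)$ of spaces of nearly holomorphic Siegel modular forms of fixed level, and then invoke the known finite dimensionality of those spaces from \cite[Lemma 14.3]{Sh00}. Your argument simply spells out in more detail the descent of the components $f_h$ to congruence subgroups (the paper allows the levels $\Gamma_h$ to depend on $h$, which suffices since the index set $\Lambda_1/\Lambda_2$ is finite), a step the paper leaves implicit.
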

\begin{proof} The theorem above gives that $N_{k,S}^{n,r}(\b{\Gamma}) \hookrightarrow \bigoplus_h N^{n,r}_{k-l/2}(\Gamma_h)$ for some congruence subgroups $\Gamma_h$. The spaces $N^{n,r}_{k-l/2}(\Gamma_h)$ are known to be finite dimensional (see \cite[Lemma 14.3]{Sh00}). 
\end{proof}

Given an automorphism $\sigma \in Aut(\mathbb{C})$ and $\b{f} \in N_{k,S}^{n,r}$, we define
\[
\b{f}^{\sigma}(\tau,w) := \sum_{h \in \Lambda_1/\Lambda_2} f^{\sigma}_h(\tau) \Theta_{2S,\Lambda_2,h}(\tau,w),
\]
where $f_h \in N^{n,r}_{k-l/2}$, and $f_h^\sigma$ is defined as in \cite[page 117]{Sh00}.
Also, for a subfield $K$ of $\mathbb{C}$, define the space $N^{n,r}_{k,S}(K)$ to be the subspace of $N^{n,r}_{k,S}$ such that $\Phi(N^{n,r}_{k,S}(K))= \bigoplus_{h \in \Lambda_1/\Lambda_2} N^n_{k-l/2}(K)$. In particular, $\b{f} \in N_{k,S}^{n,r}$ belongs to $N^{n,r}_{k,S}(K)$ if and only if $\b{f}^\sigma = \b{f}$ for all $\sigma \in Aut(\C /K)$. Moreover, if $K$ contains the Galois closure of $F$ in $\overline{\Q}$ and $\Q^{ab}$, then $N_{k,S}^{n,r} = N_{k,S}^{n,r}(K) \otimes_K \mathbb{C}$ as the same statement holds for $N_{k-l/2}^{n,r}$. Similarly it follows that if $\b{f} \in N_{k,S}^{n,r}(\overline{\mathbb{Q}})$, then $\b{f}|_{k,S} \b{\gamma} \in N_{k,S}^{n,r}(\overline{\mathbb{Q}})$ for all $\b{\gamma} \in \b{G}(F)$. At this point we also remark that for an $\b{f} \in M_{k,S}^n$ the $\b{f}^c$ defined in Section \ref{sec:L-function} is nothing else than $\b{f}^{\rho}$ where $1 \neq \rho \in Gal(\mathbb{C}/\mathbb{R})$, i.e.\@, a complex conjugation. \newline

We now define a variant of the holomorphic projection in the Siegel-Jacobi case. We define a map $\f{p}\colon N_{k,S}^{n,r}(\overline{\Q}) \rightarrow M_{k,S}^n(\overline{\Q})$ whenever $k_v  - l/2> n + r_v$ for all $v \in \mathbf{a}$ by 
\[
\f{p}(\b{f}) := \f{p}\left(\sum_{h \in \Lambda_1/\Lambda_2} f_h(\tau) \Theta_{2S,\Lambda_2,h}(\tau,w)\right) := \sum_{h \in \Lambda_1/\Lambda_2} \widetilde{\f{p}}(f_h(\tau)) \Theta_{2S,\Lambda_2,h}(\tau,w),
\]
where $\widetilde{\f{p}}\colon N_{k-l/2}^{n,r}(\overline{\mathbb{Q}}) \rightarrow M_{k-l/2}^n(\overline{\mathbb{Q}})$ is the holomorphic projection operator defined for example in \cite[Chapter III, Section 15]{Sh00} and its algebraic properties are established in \cite[Lemma 28.2]{Sh00}. 

\begin{lem} \label{holomorphic_projection} Assume $n >1$ or $F\neq \mathbb{Q}$, and that $\b{f} \in S_{k,S}^n$ satisfies the Property A, and $k_v - l/2 > n + r_v$ for all $v \in \mathbf{a}$. Then for any $\b{g} \in N_{k,S}^{n,r}(\overline{\mathbb{Q}})$,
\[
<\b{f},\b{g}> = <\b{f},\f{p}(\b{g})>.
\]
\end{lem}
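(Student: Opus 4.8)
The plan is to reduce the claim to the already-established scalar (Siegel) case via the theta decomposition of Theorem \ref{Structure of nearly holomorphic Siegel-Jacobi}, exactly as Lemma \ref{projection to cuspidal part} reduced its statement to the cuspidal projection on Siegel forms. The key point is that all the structural machinery — the isomorphism $\Phi$, the definition of $\f{p}$, and the inner product formula under Property A — has been arranged so that everything diagonalizes over $h \in \Lambda_1/\Lambda_2$.

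\smallskip

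First I would invoke the inner-product formula established just before Lemma \ref{projection to cuspidal part}: since $\b{f}$ is cuspidal and satisfies Property A, writing $\b{f} = \sum_h f_h \Theta_{2S,\Lambda_2,h}$ and $\b{g} = \sum_h g_h \Theta_{2S,\Lambda_2,h}$ with $f_h \in S^n_{k-l/2}$ and $g_h \in N^{n,r}_{k-l/2}$, we have
\[
<\b{f},\b{g}> = N(\det(4S))^{-n/2} \sum_{h \in \Lambda_1/\Lambda_2} <f_h, g_h>.
\]
I would note that this identity is valid here for the same reason as in that lemma: each $f_h$ is cuspidal (by Property A), so each individual integral converges and exchanging summation and integration is legitimate; the near-holomorphy of $g_h$ causes no trouble because the Petersson product only requires one factor to be cuspidal, and $f_h$ supplies the rapid decay.

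\smallskip

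Next I would apply the scalar result of Shimura. By hypothesis $k_v - l/2 > n + r_v$ for all $v$, which is precisely the convergence condition needed for the Siegel holomorphic projection $\widetilde{\f{p}}\colon N^{n,r}_{k-l/2}(\overline{\Q}) \to M^n_{k-l/2}(\overline{\Q})$. The defining property of $\widetilde{\f{p}}$ (see \cite[Chapter III, Section 15]{Sh00}, with arithmetic properties in \cite[Lemma 28.2]{Sh00}) is that
\[
<f_h, g_h> = <f_h, \widetilde{\f{p}}(g_h)>
\]
for every cuspidal $f_h \in S^n_{k-l/2}$ and every $g_h \in N^{n,r}_{k-l/2}$. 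Summing over $h$ and using the definition $\f{p}(\b{g}) = \sum_h \widetilde{\f{p}}(g_h)\,\Theta_{2S,\Lambda_2,h}$ together with the same inner-product formula applied to the pair $(\b{f}, \f{p}(\b{g}))$, I obtain
\[
<\b{f},\b{g}> = N(\det(4S))^{-n/2}\sum_{h} <f_h, \widetilde{\f{p}}(g_h)> = <\b{f}, \f{p}(\b{g})>,
\]
which is the desired equality.

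\smallskip

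The main obstacle I anticipate is \textbf{justifying the termwise inner-product formula in the nearly holomorphic setting}, i.e. that the displayed decomposition of $<\b{f},\b{g}>$ into a sum of scalar products remains valid when $\b{g}$ is only nearly holomorphic rather than holomorphic. In the holomorphic case this was handled by the computation from \cite[Lemma 3.4]{Z89}; here one must check that the extra polynomial factors $p_{t,r}((\Im\tau)^{-1})$ in the Fourier expansion of $\b{g}$, together with the Gaussian factor $\Delta_{S,k}$, do not obstruct the interchange of sum and integral. Because $\b{f}$ is a genuine cusp form, its $f_h$ decay rapidly and dominate the at-most-polynomial growth of the $g_h$, so the interchange should go through by the same argument; nonetheless this is the step requiring the most care, and it is exactly the point at which Property A is used to guarantee each $f_h$ is cuspidal so that the individual integrals converge.
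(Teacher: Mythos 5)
Your proposal is correct and follows essentially the same route as the paper's own (very terse) proof: reduce to the Siegel modular form case by combining Property A, which splits the Petersson product as $N(\det(4S))^{-n/2}\sum_{h} <f_h,g_h>$ exactly as in Lemma \ref{projection to cuspidal part}, with the defining property $<f_h,g_h> = <f_h,\widetilde{\f{p}}(g_h)>$ of the Siegel holomorphic projection. Your write-up is in fact more detailed than the paper's two-sentence argument, and the convergence/interchange point you flag at the end is precisely the step the paper leaves implicit by appealing to the proof of Lemma \ref{projection to cuspidal part}.
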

\begin{proof} This follows from the fact that the above property holds for nearly holomorphic Siegel modular forms, and the fact that the Property A allows us to write the Petersson inner product of Siegel-Jacobi forms as a sum of Petersson inner products of Siegel modular forms, in a similar way as we did in the proof of Lemma \ref{projection to cuspidal part}.  
\end{proof}

Let us write $F_1$ for the Hilbert class field extension of $F$ and denote by $\{\psi_i\}$ the ideal characters corresponding to the characters of $Gal(F_1/F)$. We can now state a theorem regarding the nearly holomorphicity of Siegel-type Jacobi Eisenstein series.  

\begin{thm}\label{algebraic properties of Eisenstein series} Consider the normalized Siegel-type Jacobi-Eisenstein series
\[
D(s):=D(z,s;k,\chi) :=  \Lambda_{k-l/2,\mathfrak{c}}^n(s-l/4,\chi \psi_S) E^n(z,s,\chi).
\]

Let $\mu \in \mathbb{Z}$ be such that 
\begin{enumerate}
\item[(i)] $n+1 - (k_v - l/2) \leq \mu-l/2 \leq k_v - l/2$ for all $v \in \mathbf{a}$, and
\item[(ii)] $| \mu -l/2 - \frac{n+1}{2}| + \frac{n+1}{2} - k_v + l/2 \in 2\mathbb{Z}$,
\end{enumerate}
but exclude the cases
\begin{enumerate}
\item[(a)] $\mu = \frac{n+2}{2}+ l/2$, $F = \mathbb{Q}$ and $\chi^2 \psi_i^2 = 1$ for some $\psi_i$,
\item[(b)] $\mu = l/2$, $\mathfrak{c} = \mathfrak{o}$ and $\chi \psi_S \psi_i = 1$ for some $\psi_i$,
\item[(c)] $0 < \mu - l/2 \leq n/2$, $\mathfrak{c} = \mathfrak{o}$ and $\chi^2 \psi_i^2 = 1$ for some $\psi_i$.
\item[(d)] $\mu \leq l +n$ if $F$ has class number larger than one.
\end{enumerate}
Then 
\[
D(\mu/2) \in \pi^{\beta} N_{k,S}^{n,r}(\overline{\mathbb{Q}}),
\]
where  
$$r = \begin{cases} {n(k-\mu+2)\over 2} & \mbox{if } \mu = {n+2\over 2} +{l\over 2}$, $F=\mathbb{Q}, \chi^2=1,\\
{k\over 2}-{l\over 4} & \mbox{if } n=1, \mu = 2 + {l\over 2}, F= \mathbb{Q}, \chi \psi_S = 1,\\
{n\over 2} k - {n\over 2}({l\over 2} +|\mu - {l\over 2} - {n+1\over 2}| + {n+1\over 2}) \mathbf{a} & \mbox{otherwise} \end{cases}$$
and by $x\mathbf{a}$ we understand a tuple $(x,\ldots,x)$ of length $d$.
Moreover,
$\beta = {n\over 2} \sum_{v \in \mathbf{a}} (k_v - l + \mu) - de$,
where 
$$e :=\begin{cases} [{(n+1)^2\over 4}] - \mu + {l\over 2} & \mbox{if } 2\mu -l+n\in 2\mathbb{Z}, \mu \geq n + {l\over 2},\\
[{n^2\over 4}] & \mbox{otherwise} .\end{cases} $$
\end{thm}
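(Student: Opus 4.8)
The plan is to reduce the entire statement about the Siegel-Jacobi Eisenstein series $D(z,s;k,\chi)$ to the already-understood arithmetic of Siegel-type Eisenstein series on $\Sp_n$ via the theta decomposition of Theorem \ref{Structure of nearly holomorphic Siegel-Jacobi}. The key structural input is that a nearly holomorphic Siegel-Jacobi form decomposes as $\b{f} = \sum_{h \in \Lambda_1/\Lambda_2} f_h(\tau)\, \Theta_{2S,\Lambda_2,h}(\tau,w)$ with the $f_h$ nearly holomorphic Siegel modular forms of weight $k-l/2$. So first I would write $E^n(z,s;\chi)$ through its theta components and identify, under the isomorphism $\Phi$, each component $f_h$ of the normalized series $D(z,s;k,\chi)$ with (a linear combination of) the normalized Siegel-type Eisenstein series of weight $k-l/2$ and character $\chi\psi_S$ on $\Sp_n$ studied by Shimura. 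This is exactly why the normalizing factor is $\Lambda^n_{k-l/2,\mathfrak{c}}(s-l/4,\chi\psi_S)$ and why the relevant character is the twist $\chi\psi_S$: the half-integral versus integral dichotomy (the parity of $l$) is absorbed into whether the $f_h$ live in integral- or half-integral-weight Siegel spaces, matching the two cases in the definition of $\Lambda^n_{k-l/2,\mathfrak{c}}$ in Theorem \ref{thm:dm_identity}.

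Once this identification is in place, the second step is to quote the corresponding theorem of Shimura (\cite[Chapter IV]{Sh00}) on the near holomorphy and arithmeticity of the normalized Siegel Eisenstein series at the point $s=\mu/2$. Shimura's result says that, under exactly the numerical conditions on $\mu$ translated by the weight shift $k \mapsto k-l/2$, the value of the normalized Siegel Eisenstein series lies in $\pi^{\beta'} N^{n,r}_{k-l/2}(\overline{\mathbb{Q}})$ for an explicit degree $r$ and power $\beta'$. I would then transport this back through $\Phi^{-1}$: since the theta series $\Theta_{2S,\Lambda_2,h}$ are holomorphic with algebraic Fourier coefficients, multiplying the nearly holomorphic $f_h$ by them and summing preserves both near holomorphy and the field of definition $\overline{\mathbb{Q}}$, by the very definition of $N^{n,r}_{k,S}(\overline{\mathbb{Q}})$ via $\Phi$. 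The conditions (i), (ii) and the excluded cases (a)--(d) are precisely the images under the substitution $k \mapsto k-l/2$ of Shimura's conditions for the Siegel case (the appearance of the ideal characters $\psi_i$ of $\mathrm{Gal}(F_1/F)$ in (a)--(d) reflects the class-number obstructions to critical-value arithmeticity in the totally real setting, and (d) is the genuinely new class-number-$>1$ restriction). The formulas for $r$, $\beta$ and $e$ should then drop out by bookkeeping: the shift $k_v \mapsto k_v - l/2$ explains every appearance of $l/2$ and $l/4$, and the factor $n/2$ in $\beta$ versus Shimura's power accounts for the $n$ columns of $w$.

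The main obstacle I expect is making the identification in the first step fully rigorous and uniform in the cusps. The theta decomposition controls the form at the cusp at infinity, but $E^n$ is a sum over the whole coset space $\b{P}^n(F)\backslash\b{G}^n(F)$, so one must check that applying $\Phi$ commutes with the Eisenstein summation and that each $f_h$ is genuinely (a constant multiple of) a Siegel Eisenstein series attached to $\chi\psi_S$ at all cusps, not merely at infinity; this is the same difficulty flagged in the Remark on page \pageref{rem:property A} about understanding the theta series at all cusps. A secondary technical point is verifying that the two degenerate rows in the piecewise formula for $r$ (the cases $\mu = \tfrac{n+2}{2}+\tfrac{l}{2}$ with $F=\mathbb{Q},\chi^2=1$, and $n=1,\mu=2+\tfrac{l}{2},F=\mathbb{Q},\chi\psi_S=1$) line up with the exceptional near-holomorphy degrees in Shimura's analysis, where the Eisenstein value acquires an anomalous polynomial degree; these correspond to near-boundary critical points and have to be matched case by case against \cite[Chapter IV]{Sh00} rather than by the generic formula.
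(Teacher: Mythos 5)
Your overall strategy---reduce $D(s)$ to Siegel-type Eisenstein series of weight $k-l/2$ and invoke Shimura's near-holomorphy and arithmeticity theorem (\cite[Theorem 17.9]{Sh00})---is indeed the paper's starting point; the paper imports the reduction from \cite[Section 8]{BMana} (a generalization of Heim's identity expressing the Jacobi Eisenstein series through Siegel Eisenstein series times theta series), rather than re-deriving it through $\Phi$, which incidentally also disposes of your worry about commuting $\Phi$ with the Eisenstein summation and about the cusps. However, there is a genuine gap at the heart of your plan: your identification in the first step is wrong whenever $F$ has class number greater than one. The decomposition of $E^n(z,s,\chi)$ does \emph{not} involve Siegel Eisenstein series attached to the single character $\chi\psi_S$; it involves the series $E^n(\tau,s-l/4;\chi\psi_S\psi_i)$ for \emph{all} characters $\psi_i$ of the Hilbert class group of $F$. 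Each of these is nearly holomorphic and defined over $\overline{\mathbb{Q}}$ only after normalizing by \emph{its own} factor $\Lambda^n_{k-l/2,\mathfrak{c}}(s-l/4,\chi\psi_S\psi_i)$, whereas the theorem normalizes $D(s)$ by the single factor $\Lambda^n_{k-l/2,\mathfrak{c}}(s-l/4,\chi\psi_S)$. So ``transporting Shimura back through $\Phi^{-1}$'' is not the formality you describe: one must additionally prove that
\[
\frac{\Lambda^n_{k-l/2,\mathfrak{c}}(\mu/2-l/4,\chi\psi_S)}{\Lambda^n_{k-l/2,\mathfrak{c}}(\mu/2-l/4,\chi\psi_S\psi_i)} \in \overline{\mathbb{Q}}
\]
for every $\psi_i$, and exclude the cases where this factor has a pole.

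This ratio statement is the actual content of the paper's proof, and it is \emph{not} known in general---it would follow from Beilinson's conjectures for motives of finite Hecke characters over totally real fields. That is precisely the origin of exclusion (d): for $\mu > n+l$ the $L$-values involved fall in the critical range of \cite[Theorem A6.5]{Sh96}, and the paper proves the ratio is algebraic term by term, using that the $\psi_i$ are unramified everywhere including infinity (so $\chi\psi_S$ and $\chi\psi_S\psi_i$ have the same infinite type and each $L$-value is $\pi^{\ell d}$ times an algebraic number with the \emph{same} power of $\pi$ in numerator and denominator), that $\psi_S^2=1$, and a parity argument matching $\mu-l/2$ to the parity of $\chi\psi_S$. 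Your proposal reads (d) as a restriction formally inherited from Shimura's conditions under $k\mapsto k-l/2$; it is not. Conditions (i), (ii) and (a)--(c) do come from Shimura's theorem applied to the twisted series (which is also why the $\psi_i$ appear in them), but (d) exists solely to replace the unproven $L$-value ratio by a critical-value computation. As written, your argument is complete only in the case where the class number of $F$ is one.
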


\begin{proof}The proof is similar to the proof of Theorem 8.3 in \cite{BMana}, where the analytic properties of the series $D(s)$ were established. Indeed, the relation between the Jacobi Eisenstein series and the classical Siegel Eisenstein series (for details see \cite[Section 8]{BMana}) allows us to read off the near holomorphy of $D(s)$ from the near holomorphy of the latter series, which was established in \cite[Theorem 17.9]{Sh00}.  

To be more precise, in \cite{BMana} (page 45 there, just before Theorem 8.2) we related the Jacobi Eisenstein series $E^n(z,s,\chi)$ to a sum involving the Siegel Eisenstein series $E^n(\tau,s-l/4;\chi \psi_S \psi_i)$ (with the notation as in \cite{BMana}), where $\psi_i$'s vary over all the finite unramified characters of $F$, that is the characters corresponding to the Hilbert class group of $F$. However, the normalizing factor for the Jacobi Eisenstein series $D(s)$ is $\Lambda_{k-l/2,\mathfrak{c}}^n(s-l/4,\chi \psi_S)$ whereas for the various Siegel Eisenstein series is $\Lambda_{k-l/2,\mathfrak{c}}^n(s-l/4,\chi \psi_S \psi_i)$, which of course depends on the characters $\psi_i$. 

Therefore we need to assure that the series
\[
\frac{\Lambda_{k-l/2,\mathfrak{c}}^n(\mu/2-l/4,\chi \psi_S)}{\Lambda_{k-l/2,\mathfrak{c}}^n(\mu/2-l/4,\chi \psi_S \psi_i)} \Lambda_{k-l/2,\mathfrak{c}}^n(\mu/2-l/4,\chi \psi_S \psi_i)E^n(\tau,\mu/2-l/4;\chi \psi_S \psi_i)
\]
has the same algebraic properties, i.e., whether it is defined over $\overline{\mathbb{Q}}$, as the normalized series 
$$\Lambda_{k-l/2,\mathfrak{c}}^n(\mu/2-l/4,\chi \psi_S \psi_i)E^n(\tau,\mu/2-l/4;\chi \psi_S \psi_i),$$
which is known (see \cite[Theorem 17.9]{Sh00}) to be in  $\pi^{\beta} N_{k,S}^{n,r}(\overline{\mathbb{Q}})$ with the choices of $\beta$ and $r$ as in the theorem.
Of course, we need to exclude the cases where the factor $\frac{\Lambda_{k-l/2,\mathfrak{c}}^n(\mu/2-l/4,\chi \psi_S)}{\Lambda_{k-l/2,\mathfrak{c}}^n(\mu/2-l/4,\chi \psi_S \psi_i)}$ has a pole. 
Then it remains to check whether
 \begin{equation}\label{eq:Lambda_alg}
 \frac{\Lambda_{k-l/2,\mathfrak{c}}^n(\mu/2-l/4,\chi \psi_S)}{\Lambda_{k-l/2,\mathfrak{c}}^n(\mu/2-l/4,\chi \psi_S \psi_i)} \in \overline{\mathbb{Q}}.
\tag{$\ast$}
 \end{equation} 
If the class number of $F$ is one, \eqref{eq:Lambda_alg} holds trivially. If it is greater than one, this should follow from the general Beilinson conjectures for motives associated to finite Hecke characters over totally real fields (see for example \cite{Scholl}). However since this is not known in general, in the proof below we are forced to set then the condition $\mu > n+l$. 

Recall first that for a finite Hecke character $\phi$ of $F$, we defined
$$\Lambda^{n}_{k-l/2,\mathfrak{c}}(\mu/2 - l /4,\phi) = \begin{cases} L_{\mathfrak{c}}(\mu - l/2,\phi) \prod_{j=1}^{[n/2]} L_{\mathfrak{c}}(2\mu-l-2j,\phi^{2}) & \hbox{if } l \in 2\mathbb{Z},\\
 \prod_{j=1}^{[(n+1)/2]} L_{\mathfrak{c}}(2\mu-l-2j+1,\phi^{2}) & \hbox{if } l \notin 2\mathbb{Z}.
 \end{cases}$$
We prove the statement in \eqref{eq:Lambda_alg} term by term for quotients of the corresponding factors in the product above. For this we use the fact stated in \cite[Theorem A6.5]{Sh96}: if $\phi$ is a finite Hecke character of infinite type of the form $\prod_{v \in \mathbf{a}} \(\frac{x_v}{|x_v|}\)^{\ell}$ for some positive integer $\ell$ (for us, since $F$ is totally real, only the parity of $\ell$ matters), then $L_{\mathfrak{c}}(\ell,\phi) \in \pi^{\ell d} \,\,\overline{\mathbb{Q}}$. Now observe that the characters $\chi \psi_S$ and $\chi \psi_S \psi_i$ (for each $i$) have the same infinite type since the characters $\psi_i$ are even at all infinite places due to the fact that they correspond to the Hilbert class field extension of $F$ (unramified also at infinity). In particular this establishes that for all $i$,
$$\frac{\prod_{j=1}^{[(n+\delta(l))/2]} L_{\mathfrak{c}}(2\mu-l-2j+\delta(l),\chi^{2})}{\prod_{j=1}^{[(n+\delta(l))/2]} L_{\mathfrak{c}}(2\mu-l-2j+\delta(l),\chi^{2} \psi_i^2)} \in \overline{\mathbb{Q}}\,,$$ 
where $\delta(l)$ is zero or one depending whether $l$ is even or odd, and where we have used the fact that $\psi_S^2=1$.

Now it remains to show that for $l$ even and $\mu > n+l$, $\frac{L_{\mathfrak{c}}(\mu - l/2,\chi \psi_S)}{L_{\mathfrak{c}}(\mu - l/2,\chi \psi_S \psi_i)} \in \overline{\mathbb{Q}}$ for all $i$. This condition on $\mu$ together with the assumption $(ii)$ imply that $\mu \equiv k_v \pmod{2}$ for all $v \in \mathbf{a}$. 
Furthermore, since $\psi_S$ is the Hecke character corresponding to the quadratic extension
$F((-1)^{l/4}\det(S)^{1/2})/F$ (see Theorem \ref{thm:dm_identity}), $l/2$ has the same parity as the character $\psi_S$. Hence the parity of $\mu-l/2$ coincides with the parity of the characters $\chi\psi_S$ and $\chi \psi_S \psi_i$ for all $i$.
\end{proof}

\begin{lem} \label{diagonal restriction preserves algebraicity} Consider the embedding 
\[
\Delta : \mathcal{H}_{n,l} \times \mathcal{H}_{n,l} \hookrightarrow \mathcal{H}_{2n,l},\,\,\,(\tau_1,w_1) \times (\tau_2,w_2) \mapsto (\diag[\tau_1, \tau_2], (w_1 \,\,w_2)).
\]
Then the pullback
\[
\Delta^*\left( N^{2n,r}_{k,S}(\overline{\Q}) \right) \subset N^{n,r}_{k,S}(\overline{\Q}) \otimes_{\overline{\Q}}   N^{n,r}_{k,S}(\overline{\Q}). 
\]
\end{lem}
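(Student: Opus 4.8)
The plan is to reduce the assertion to the corresponding diagonal-restriction statement for nearly holomorphic \emph{Siegel} modular forms by means of the theta decomposition of Theorem \ref{Structure of nearly holomorphic Siegel-Jacobi}, after recording how the theta series transform under $\Delta$. First I would expand a given $\b{F} \in N^{2n,r}_{k,S}(\overline{\Q})$ as $\b{F}(\tau,w)=\sum_{h} F_h(\tau)\,\Theta_{2S,\Lambda_2,h}(\tau,w)$ with $F_h \in N^{2n,r}_{k-l/2}(\overline{\Q})$, where now $\Lambda_1=AM_{l,2n}(\f{o})$ and $\Lambda_2=2\diag[s_1^{-1},\ldots,s_l^{-1}]M_{l,2n}(\f{o})$ are the degree-$2n$ lattices, $A$ and the $s_i$ depending only on $S$.

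Since $A$ and $\diag[s_1^{-1},\ldots,s_l^{-1}]$ act on $M_{l,2n}(\f{o})$ column by column, splitting each matrix into its first and last $n$ columns gives $M_{l,2n}(\f{o})=M_{l,n}(\f{o})\oplus M_{l,n}(\f{o})$, hence $\Lambda_j^{(2n)}=\Lambda_j^{(n)}\oplus\Lambda_j^{(n)}$ for $j=1,2$, and $\Lambda_1/\Lambda_2\cong(\Lambda_1^{(n)}/\Lambda_2^{(n)})^2$; I write $h=(h_1\,h_2)$ for the induced representatives. Splitting the summation variable $x=(x_1\,x_2)$ in the same way and using $(x+h)\diag[\tau_1,\tau_2]\transpose{(x+h)}=\sum_{i}(x_i+h_i)\tau_i\transpose{(x_i+h_i)}$ together with $(w_1\,w_2)\transpose{(x+h)}=\sum_i w_i\transpose{(x_i+h_i)}$, the lattice sum factors into a product and one obtains
\[
\Theta_{2S,\Lambda_2,h}(\diag[\tau_1,\tau_2],(w_1\,w_2)) = \Theta_{2S,\Lambda_2^{(n)},h_1}(\tau_1,w_1)\,\Theta_{2S,\Lambda_2^{(n)},h_2}(\tau_2,w_2).
\]

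Granting the analogous statement for Siegel forms, namely $\Delta^*\bigl(N^{2n,r}_{k-l/2}(\overline{\Q})\bigr)\subset N^{n,r}_{k-l/2}(\overline{\Q})\otimes_{\overline{\Q}}N^{n,r}_{k-l/2}(\overline{\Q})$, I would apply it to each $F_h$ and substitute the theta factorization; this exhibits $\Delta^*\b{F}$ as a finite sum of terms $\bigl(g\,\Theta_{2S,\Lambda_2^{(n)},h_1}\bigr)\otimes\bigl(g'\,\Theta_{2S,\Lambda_2^{(n)},h_2}\bigr)$ with $g,g'\in N^{n,r}_{k-l/2}(\overline{\Q})$. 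By Theorem \ref{Structure of nearly holomorphic Siegel-Jacobi} each such factor is the image under $\Phi^{-1}$ of a tuple supported on a single index $h_i$ with an $\overline{\Q}$-rational entry, hence lies in $N^{n,r}_{k,S}(\overline{\Q})$, and the claim follows.

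The main point, and the step I expect to require the most care, is the Siegel statement itself. Since $\iota_S$ embeds $\Sp_n\times\Sp_n$ block-diagonally into $\Sp_{2n}$ and $j\bigl(\iota_S(\gamma_1,\gamma_2),\diag[\tau_1,\tau_2]\bigr)=j(\gamma_1,\tau_1)j(\gamma_2,\tau_2)$, the restriction $\Delta^*F$ is, separately in each variable, a nearly holomorphic Siegel form of weight $k-l/2$ and some congruence level; finite dimensionality (Corollary \ref{cor:N_finite_dim} and its Siegel analogue) then places it in the algebraic tensor product over $\C$. To descend to $\overline{\Q}$ I would verify that the $Aut(\C/\overline{\Q})$-action on Fourier expansions commutes with $\Delta^*$: writing $F=\sum_T p_T((\Im\tau)^{-1})\,\ea(\tr(T\tau))$ and $T=\left(\begin{smallmatrix}T_1 & R/2\\ \transpose{R}/2 & T_2\end{smallmatrix}\right)$, on the diagonal one has $\tr(T\tau)=\tr(T_1\tau_1)+\tr(T_2\tau_2)$ while $(\Im\tau)^{-1}$ restricts to $\diag[(\Im\tau_1)^{-1},(\Im\tau_2)^{-1}]$, so the coefficient of $\ea(\tr(T_1\tau_1))\ea(\tr(T_2\tau_2))$ in $\Delta^*F$ is a finite $\Z$-linear combination (finite because $T\geq 0$ bounds $R$ in terms of $T_1,T_2$) of the restricted polynomials $p_T$. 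Being $\Z$-linear, this operation intertwines the $\sigma$-action, so $(\Delta^*F)^\sigma=\Delta^*(F^\sigma)=\Delta^*F$; comparing against fixed $\overline{\Q}$-rational bases of the two tensor factors forces the scalar coefficients to be $Aut(\C/\overline{\Q})$-invariant, hence in $\overline{\Q}$. The delicate bookkeeping I anticipate lies in tracking the congruence levels through $\iota_S$ and in matching Shimura's $\sigma$-action on the polynomial (nearly holomorphic) coefficients with the naive action on the underlying Fourier data.
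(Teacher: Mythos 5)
Your proposal is correct, but it takes a genuinely different route from the paper's. The paper's proof is a single Galois descent carried out directly at the Jacobi level: it cites \cite[Lemma 10.6]{BMana} for the splitting $\Delta^*\b{f}=\sum_i \b{g}_i(z_1)\b{h}_i(z_2)$ with the $\b{h}_i$ a basis of the relevant space of nearly holomorphic Siegel-Jacobi forms, chooses that basis inside $N^{n,r}_{k,S}(\overline{\Q})$ (possible since $N^{n,r}_{k,S}=N^{n,r}_{k,S}(\overline{\Q})\otimes_{\overline{\Q}}\C$), and then uses $(\Delta^*\b{f})^{\sigma}=\Delta^*\b{f}$ for $\sigma\in Aut(\C/\overline{\Q})$ together with uniqueness of coefficients relative to a basis to force $\b{g}_i\in N^{n,r}_{k,S}(\overline{\Q})$. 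You instead push everything down to Siegel modular forms: theta-decompose in degree $2n$, factor the theta series under $\Delta$ (your factorization is correct, since the lattices split column-wise and $\diag[\tau_1,\tau_2]$ splits the quadratic form and the linear term), apply the Siegel-level pullback statement to each $F_h$, and reassemble through $\Phi^{-1}$, where rationality of each tensor factor is immediate from the paper's definition of $N^{n,r}_{k,S}(\overline{\Q})$ via $\Phi$. What your route buys: it is self-contained relative to this paper's structure theory (Theorem \ref{Structure of nearly holomorphic Siegel-Jacobi}) plus the Siegel case, which is exactly \cite[Lemma 24.11]{Sh00} — the very result the paper says its proof imitates — and it avoids having to define and manipulate a $\sigma$-action on two-variable Jacobi objects; the theta factorization under $\Delta$ is also a clean observation worth recording. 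What the paper's route buys is brevity, by outsourcing the hard two-variable splitting to \cite{BMana}. Two points in your sketch deserve tightening: the passage from finite-dimensionality to the splitting $F_h\circ\Delta=\sum_j g_j\otimes g_j'$ over $\C$ requires the standard argument of solving a nonsingular linear system at sufficiently many evaluation points (the paper spells this out in the proof of Lemma \ref{diagonal bounded}); and when $l$ is odd the weight $k-l/2$ is half-integral, so the Siegel-level input and the $\sigma$-action must be taken in Shimura's metaplectic setting — moreover Shimura's rational structure on nearly holomorphic forms is normalized with $(\pi\Im\tau)^{-1}$ rather than $(\Im\tau)^{-1}$ as the polynomial variable, though your $\Z$-linear restriction argument is insensitive to this normalization, as you correctly anticipate in your final caveat.
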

\begin{proof} The proof of this lemma is identical to the Siegel modular form case (see \cite[Lemma 24.11]{Sh00}). Let $\b{f} \in N^{2n,r}_{k,S}(\b{\Gamma}^{2n}, \overline{\mathbb{Q}})$ for a sufficiently deep congruence subgroup $\b{\Gamma}^{2n}$. By \cite[Lemma 10.6]{BMana}, we have that
\[
\b{g}(z_1,z_2) = \sum_i \b{g}_i(z_1) \b{h}_i(z_2),
\]
where $\b{g}_i \in N^{n,r}_{k,S}$, and $\b{h}_i \in N_{k,S}^{n,r}$, where the latter form a basis of $N_{k,S}^{n,r}$. Using now the fact that $N_{k,S}^{n,r} = N_{k,S}^{n,r}(\overline{\mathbb{Q}}) \otimes_{\overline{\mathbb{Q}}} \mathbb{C}$, we may take $\b{h}_i \in N_{k,S}^{n,r}(\overline{\mathbb{Q}})$. Now, for any $\sigma \in Aut(\mathbb{C}/\overline{\mathbb{Q}})$, 
\[
\b{g}(z_1,z_2) = \b{g}^{\sigma}(z_1,z_2) = \sum_i \b{g}^{\sigma}_i(z_1) \b{h}^{\sigma}_i(z_2) = \sum_i \b{g}^{\sigma}_i(z_1) \b{h}_i(z_2).
\] 
Hence, $\b{g}^{\sigma}_i(z_2) = \b{g}_i(z_2)$ for all $\sigma \in  Aut(\mathbb{C}/\overline{\mathbb{Q}})$, and thus $\b{g}_i \in N^{n,r}_{k,S}(\overline{\mathbb{Q}})$.
\end{proof}

We can now establish a theorem which is the key result towards Theorem \ref{Main Theorem on algebraicity}. We note that this kind of results are well known for the case of Siegel and Hermitian modular forms, see for example \cite[Theorem 28.5 and Theorem 29.5]{Sh00}, and our proof is inspired by the proofs of these theorems. We would like to emphasise that, as in the case of Siegel and Hermitian modular forms, a vital component of the proof is a result on the algebraic splitting of the cuspidal part from the Eisenstein part. For the case of Siegel-Jacobi modular forms this is provided by Lemma \ref{projection to cuspidal part} above, and it is precisely this part that makes the Property A so important for our result.  

\begin{thm} \label{ratio of inner products}
Assume $n >1$ or $F\neq \mathbb{Q}$. Let $0 \neq \b{f} \in S_{k,S}^{n}(\b{\Gamma},\overline{\mathbb{Q}})$, with $S$ satisfying the condition $M_{\f{p}}^+$ for all prime ideals $\f{p}$ of $F$ coprime to $\f{c}$, be an eigenfunction of $T(\mathfrak{a})$ for all integral ideals $\mathfrak{a}$ with $(\f{a},\f{c})=1$. Define $\mu := \min_{v \in \mathbf{a}}{\{k_v\}}$ and assume that
\begin{enumerate}
\item $\mu > 2n +l +1$,
\item Property A holds for all $\widetilde{\b{f}} \in V(\b{f})$,
\item $k_v \equiv k_{v'} \mod{2}$ for all $v, v' \in \mathbf{a}$.
\item $k_v  > l/2 + n(1+ k_v - \mu)$ for all $v \in \mathbf{a}$. 
\end{enumerate}
Then for any $\b{g} \in M_{k,S}^n(\overline{\mathbb{Q}})$,
\[
\frac{<\b{f},\b{g}>}{ <\b{f}, \b{f}>} \in \overline{\mathbb{Q}}.
\]
\end{thm}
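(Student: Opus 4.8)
The plan is to reduce the statement, in three $\overline{\mathbb{Q}}$-rationality-preserving steps, to the known algebraicity of ratios of Petersson products for Siegel modular forms. First I would dispose of the non-cuspidal part of $\b{g}$: since every element of $V(\b{f})$, and in particular $\b{f}$ itself, satisfies Property A by hypothesis $(2)$, and the weight bounds $(1)$ and $(3)$ place us in the situation of Lemma \ref{projection to cuspidal part}, there is a cuspidal projection $\mathfrak{q}(\b{g}) \in S_{k,S}^n(\overline{\mathbb{Q}})$ with $<\b{f},\b{g}> = <\b{f},\mathfrak{q}(\b{g})>$. After passing to a common congruence subgroup on which both $\b{f}$ and $\mathfrak{q}(\b{g})$ are defined, it therefore suffices to treat $\b{g} \in S_{k,S}^n(\b{\Gamma},\overline{\mathbb{Q}})$.

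Next I would exploit the Hecke action. The operators $T(\f{a})$ with $(\f{a},\f{c})=1$ commute, preserve the $\overline{\mathbb{Q}}$-structure $S_{k,S}^n(\b{\Gamma},\overline{\mathbb{Q}})$ by Proposition \ref{Hecke Operators preserve field of definition}, and are normal with respect to the Petersson product, their adjoints being expressible through the involution $\b{h}\mapsto \b{h}^c$ of Proposition \ref{Behaviour under complex conjugation}. Hence $S_{k,S}^n(\b{\Gamma})$ splits as an orthogonal sum of joint eigenspaces, whose eigensystems are all algebraic, being eigenvalues of $\overline{\mathbb{Q}}$-rational operators on a finite-dimensional $\overline{\mathbb{Q}}$-space. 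The subalgebra $\overline{\mathbb{Q}}[T(\f{a})]$ of $\mathrm{End}(S_{k,S}^n(\b{\Gamma},\overline{\mathbb{Q}}))$ is then commutative and semisimple, so its idempotents are $\overline{\mathbb{Q}}$-rational; in particular the projector $P$ onto the eigenspace $V(\b{f})$ lies in $\overline{\mathbb{Q}}[T(\f{a})]$. Thus $P\b{g} \in V(\b{f})$, and by orthogonality of distinct eigensystems $<\b{f},\b{g}> = <\b{f},P\b{g}>$. This reduces the claim to proving $<\b{f},\b{h}>/<\b{f},\b{f}> \in \overline{\mathbb{Q}}$ for $\b{h} \in V(\b{f})$.

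Finally I would pass to Siegel modular forms via the theta decomposition. Since every element of $V(\b{f})$ has Property A, writing $\b{f} = \sum_h f_h \Theta_{2S,\Lambda_2,h}$ and $\b{h} = \sum_h h_h \Theta_{2S,\Lambda_2,h}$ with $f_h,h_h \in S_{k-l/2}^n(\overline{\mathbb{Q}})$, the formula recorded after Property A gives $<\b{f},\b{h}> = N(\det(4S))^{-n/2}\sum_h <f_h,h_h>$, and likewise for $<\b{f},\b{f}>$, the prefactor being algebraic. At this point I would decompose the $\overline{\mathbb{Q}}$-span of the $f_h,h_h$ into Siegel-Hecke eigenspaces by the same orthogonal-projector mechanism, and invoke Shimura's algebraicity theorem for ratios of Petersson products of Siegel cusp forms, namely \cite[Theorem 28.5]{Sh00}, to conclude.

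The main obstacle is the last step: handling a multi-dimensional $V(\b{f})$ simultaneously in all theta-components. When $V(\b{f})$ is one-dimensional the argument is immediate, for then $P\b{g} = c\,\b{f}$ with $c \in \overline{\mathbb{Q}}$ and the ratio equals $\overline{c} \in \overline{\mathbb{Q}}$; but in general one must exclude transcendental ratios among the Gram entries of a $\overline{\mathbb{Q}}$-basis of $V(\b{f})$, and no multiplicity-one statement is available in the Siegel-Jacobi setting. The delicate point is that the Jacobi Hecke eigensystem of $\b{f}$ need not single out one Siegel-Hecke eigensystem for the components $f_h$, so the real work is to show that the self-adjointness of the $T(\f{a})$, together with the compatibility between the Jacobi and Siegel Hecke actions under the theta isomorphism of Theorem \ref{Structure of holomorphic Siegel-Jacobi}, aligns the two eigenspace decompositions; it is this compatibility, rather than any analytic estimate, that carries the weight of the proof.
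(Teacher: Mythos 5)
Your first two reductions (cuspidal projection via Lemma \ref{projection to cuspidal part}, then orthogonal projection onto $V(\b{f})$ using normality and $\overline{\mathbb{Q}}$-rationality of the Hecke operators) coincide with the paper's proof and are fine. The gap is the final step, and it is fatal as written, because that step \emph{is} the theorem: once $\b{g}\in V(\b{f})$, proving $<\b{f},\b{g}>/<\b{f},\b{f}>\in\overline{\mathbb{Q}}$ when $\dim_{\overline{\mathbb{Q}}}V(\b{f})>1$ is exactly the point. Your route through the theta decomposition fails for two concrete reasons. First, the isomorphism $\Phi$ of Theorem \ref{Structure of holomorphic Siegel-Jacobi} does not intertwine the Jacobi Hecke operators $T(\f{a})$ with the Siegel Hecke operators acting component-wise on the tuple $(f_h)_h$; the Jacobi operators mix the $h$-components (this is the same phenomenon that makes the Jacobi/Siegel correspondence delicate already for $n=1$ and non-trivial index, cf.\ the discussion of \cite{SZ} in the introduction). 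So $\b{f}$ being a Jacobi eigenform gives no Siegel eigenform structure on the $f_h$, and \cite[Theorem 28.5]{Sh00} simply cannot be invoked. Second, even if every $f_h$ were a Siegel eigenform, Shimura's theorem only controls ratios against a \emph{single} eigenform's period: the quantities $<f_h,f_h>$ for distinct $h$ (distinct eigensystems, or orthogonal forms inside one eigenspace) are a priori independent transcendentals, so algebraicity of $\sum_h<f_h,h_h>\big/\sum_h<f_h,f_h>$ does not follow. You name this obstacle yourself and defer it as "the real work", but no mechanism in your proposal can supply it; note also that your argument never uses hypothesis (4) of the theorem, nor the condition $M_{\f{p}}^+$, which is a sign that the structure actually forcing the conclusion has not been engaged.

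The paper's proof supplies precisely the missing mechanism, and by a different route: it evaluates the doubling identity of Theorem \ref{thm:dm_identity} at $s=\mu/2$ against each $\tilde{\b{f}}\in V(\b{f})$, uses Theorem \ref{algebraic properties of Eisenstein series} and Lemma \ref{diagonal restriction preserves algebraicity} to write the pulled-back normalized Eisenstein series as $\sum_i \b{f}_i(z_1)\b{g}_i(z_2)$ with $\b{f}_i,\b{g}_i\in N^{n,r}_{k,S}(\overline{\mathbb{Q}})$, and then applies the holomorphic and cuspidal projections $\f{p}$ and $\f{q}$ --- this is exactly where Property A and hypothesis (4) (the bound $k_v-l/2>n+r_v$) are consumed, via Lemmas \ref{holomorphic_projection} and \ref{projection to cuspidal part}. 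Comparing Fourier coefficients and using the non-vanishing \eqref{eq:non-vanishing of L-values} of $\b{\Lambda}(\mu/2,\b{f},\chi)$ (which is where $M_{\f{p}}^+$ and the Euler product enter), one obtains cusp forms $\widetilde{\b{h}}_{r,t}\in S^n_{k,S}(\overline{\mathbb{Q}})$ that span $V(\b{f})$ and satisfy $<\widetilde{\b{h}}_{r,t},\tilde{\b{f}}>\in\pi^{\delta-d_0+\beta}\,\b{\Lambda}(\mu/2,\b{f},\chi)\,\overline{\mathbb{Q}}$ for every $\tilde{\b{f}}\in V(\b{f})$. Hence the entire Gram matrix of $V(\b{f})$ lies in a single $\overline{\mathbb{Q}}$-line in $\mathbb{C}$, and all ratios of Petersson products, in particular $<\b{f},\b{g}>/<\b{f},\b{f}>$, are algebraic. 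Any repair of your proposal would have to reproduce this single-period uniformization of $V(\b{f})$; the theta decomposition plus Shimura's Siegel-modular theorem does not provide it.
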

\begin{proof} By Lemma \ref{projection to cuspidal part} it suffices to prove this theorem for $\b{g} \in S_{k,S}^n(\overline{\mathbb{Q}})$. Furthermore, as it was shown in \cite[Section 7.4]{BMana}, the Hecke operators are normal and Proposition \ref{Hecke Operators preserve field of definition}  states that the Hecke operators $T(\f{a})$ preserve $S_{k,S}^{n}(\b{\Gamma},\overline{\mathbb{Q}})$. That is, we have a decomposition
\[
S_{k,S}^{n}(\b{\Gamma},\overline{\Q}) = V(\b{f}) \oplus \b{U},
\]
where $\b{U}$ is a $\overline{\mathbb{Q}}$-vector space orthogonal to $V(\b{f})$. Therefore, without loss of generality, we may assume that $\b{g} \in V(\b{f})$. 

Now consider a character $\chi$ of conductor $\f{f}_{\chi} \neq \mathfrak{o}$ such that $\chi_{\a}(x) = \sgn_{\a}(x)^{k}$, $\chi^2 \neq 1$ and $G(\chi,\mu-n-l/2) \in \overline{\mathbb{Q}}^{\times}$, where 
$G(\chi,\mu-n-l/2)$ is as in Theorem \ref{thm:dm_identity}, equation \eqref{definition of $G$}. 
The existence of such a character follows from the fact that $G(\chi,2s-n-l/2)$ is the ratio of products of finitely many Euler polynomials.  

We recall that if $\tilde{\b{f}} \in V(\b{f})$, then so is $\tilde{\b{f}}^c \in V(\b{f})$ and their $L$-functions agree. In particular, up to some non-zero algebraic number, the identity \eqref{eq:dm_identity} in Theorem \ref{thm:dm_identity} becomes:
\begin{multline*}
\Lambda_{k-l/2,\mathfrak{c}}^{2n}(\mu/2-l/4,\chi \psi_S)  vol(A)  <(E|_{k,S} \b{\rho}) (\diag[z_1,z_2],\mu/2;\chi),(\tilde{\b{f}}^c|_{k,S}\b{\eta}_n)^c(z_2)>\\
={^{\overline{\mathbb{Q}}}}^{\times} c_{S,k}(\mu/2-k/2) \b{\Lambda}(\mu/2,\b{f},\chi) \tilde{\b{f}}^c(z_1), 
\end{multline*}
where, recall, $\b{\eta}_n=1_H\(\begin{smallmatrix} & -1_n\\ 1_n & \end{smallmatrix}\)$. Since we take $\mu > 2n +l +1$, and $\mu$ is equal to the smallest of the $k_v$'s, it follows that for all $v \in \mathbf{a}$:
\begin{enumerate}
\item $2n+1 - (k_v - l/2) \leq \mu-l/2 \leq k_v - l/2$,
\item $\mu   - k_v  \in 2\mathbb{Z}$;
\end{enumerate}

the last condition is possible because all $k_v$ have the same parity. Thanks to the above choice of $\mu$ we can apply Theorem \ref{algebraic properties of Eisenstein series} (note that the excluded cases there do not apply for  $\mu$ larger than $2n+l + 1$) so that 
\[
\Lambda_{k-l/2,\mathfrak{c}}^{2n}(\mu/2-l/4,\chi \psi_S) E^{2n}(z,\mu/2; \chi) \in \pi^{\beta} N_{k,S}^{2n,r}(\overline{\mathbb{Q}})
\]
for some $\beta \in \mathbb{N}$ and 
$$r = n (k  - \mu\a ).$$

The same then holds for 
\[
\Lambda_{k-l/2,\mathfrak{c}}^{2n}(\mu/2-l/4,\chi \psi_S) E^{2n}(z,\mu/2; \chi)|_{k,S} \b{\rho}.
\]
Indeed if we set
$$\mathcal{E}(z,\mu/2)= \Lambda_{k-l/2,\mathfrak{c}}(\mu/2-l/4,\chi \psi_S) E^{2n}(z,\mu/2;\chi) \in N_{k,S}^{2n,r}(\overline{\mathbb{Q}}),$$
then by Theorem \ref{Structure of nearly holomorphic Siegel-Jacobi}, 
\[
\mathcal{E}(z,\mu/2)  = \sum_{h \in \Lambda_1/\Lambda_2} E_h(\tau) \,\,\Theta_{2S,\Lambda_2,h}(\tau,w)
\]
for some $E_h \in N^{2n,r}_{k-l/2}$. Further, since $\b{\rho} = 1_H \rho$ with $\rho \in Sp_{2n}(F)$, we can write
 \[
 \mathcal{E}(z,\mu/2) |_{k,S} \b{\rho}= \sum_{h \in \Lambda_1/\Lambda_2} E_h(\tau) |_{k} \rho\, \Theta_{2S,\Lambda_2,h}(\tau,w)|_{k,S}\b{\rho},
 \]
where by \cite[page 153]{Sh94b} we have $E_h|_k\rho \in N^{2n,r}_{k-l/2}$. In this way $\mathcal{E}(z,\mu/2) |_{k,S} \b{\rho} \in N^{2n,r}_{k,S}$.

In particular, we can now conclude that,
\[
\pi^{-\beta}\Lambda_{k-l/2,\mathfrak{c}}^{2n}(\mu/2-l/4,\chi \psi_S) (E|_{k,S} \b{\rho}) (\diag[z_1,z_2],\mu/2; \chi) = \sum_i \b{f}_i(z_1) \b{g}_i(z_2),
\] 
where $\b{f}_i,\b{g}_i \in N^{n,r}_{k,S}(\overline{\mathbb{Q}})$ by Lemma \ref{diagonal restriction preserves algebraicity}. Moreover, $vol(A) = \pi^{d_0} \mathbb{Q}^{\times}$, where $d_0$ is the dimension of $\mathbb{H}_n^d$ since the volume of the Heisenberg part is normalized to one. Furthermore, 
\[
c_{S,k}(\mu/2-k/2) \in \pi^{\delta} \overline{\mathbb{Q}}^{\times},\,\,\,\delta \in \frac{1}{2}\mathbb{Z}.
\]
Altogether we obtain
\[
 \sum_i \b{f}_i(z_1) <\b{g}_i(z_2), \b{w}(z_2) > ={^{\overline{\mathbb{Q}}}}^{\times} \pi^{\delta- d_0 +\beta} \b{\Lambda}(\mu/2,\b{f},\chi) \tilde{\b{f}}^c(z_1),
\]
where $\b{w}:= (\tilde{\b{f}}^c|_{k,S}\b{\eta}_n)^c=\tilde{\b{f}}|_{k,S}\b{\eta}^{-1}_n\in S_{k,S}^n(\overline{\mathbb{Q}})$. Considering the Fourier expansion of $\b{f}_i$'s and $\tilde{\b{f}}^c$, and comparing any $(r,t)$ coefficients for which $c(r,t;\tilde{\b{f}}^c) = \overline{c(r,t;\tilde{\b{f}})} \neq 0$, we find that 
\[
<\sum_i \alpha_{i,r,t} \b{g}_i(z_2),\b{w}(z_2)> = {^{\overline{\mathbb{Q}}}}^{\times} \pi^{\delta- d_0 +\beta} \b{\Lambda}(\mu/2,\b{f},\chi) \neq 0,
\]
for some $\alpha_{i,r,t} \in \overline{\mathbb{Q}}$, where the non-vanishing follows from \eqref{eq:non-vanishing of L-values}, a corollary to Theorem \ref{Euler Product Representation}. Setting $\b{h}_{r,t}(z_2) := \sum_i \alpha_{i,r,t} \b{g}_i(z_2) \in N^{n,r}_{k,S}(\overline{\mathbb{Q}})$, we obtain 
\[
<\b{h}_{r,t}(z_2), \b{w}(z_2)>={^{\overline{\mathbb{Q}}}}^{\times} \pi^{\delta- d_0 +\beta} \b{\Lambda}(\mu/2,\b{f},\chi) \neq 0,
\]
or,
\[
<\mathfrak{p}^0(\b{h}_{r,t}|_{k,S}\b{\eta}_n)(z_2), \tilde{\b{f}}(z_2)>= {^{\overline{\mathbb{Q}}}}^{\times} \pi^{\delta- d_0 +\beta} \b{\Lambda}(\mu/2,\b{f},\chi) \neq 0,\,\,\,\,(*)
\]
where 
$$\f{p}^0 :=\begin{cases} \f{p}, & k\mbox{ not parallel},\\
\f{q} \circ \f{p}, & k\,\,\mbox{parallel}, \end{cases}$$
and we have used the assumptions (2) and (4) in the statement of the theorem  in order to be able to apply Lemma \ref{holomorphic_projection}. \newline

Since $\tilde{\b{f}} \in V(\b{f})$ was arbitrary, the forms $\widetilde{\b{h}}_{r,t} := \mathfrak{p}^0(\b{h}_{r,t}|_{k,S}\b{\eta}_n) \in S_{k,S}^n(\overline{\mathbb{Q}})$  (or rather their projections to $V(\b{f})$) for the various $(r,t)$ span the space $V(\b{f})$ over $\overline{\mathbb{Q}}$. Indeed, if we denote by $\mathcal{S} \subset V(\b{f})$ the $\overline{\mathbb{Q}}$ vector space spanned by the projections of $\widetilde{\b{h}}_{r,t}$ to $V(\b{f})$ and if there exists an $\tilde{\b{f}} \in V(\b{f})$ which is not in $\mathcal{S}$ then there is a form, say $\tilde{\b{f}_1} \in V(\b{f})$ which is orthogonal to $\mathcal{S}$. But then this would imply by $(*)$ above that 
$c(r,t;\tilde{\b{f}_1}^c) = 0$ for all $(r,t)$. In particular $\tilde{\b{f}_1} = 0$. Hence indeed $\widetilde{\b{h}}_{r,t}$ span $V(\b{f})$.
Moreover we have
\[
<\widetilde{\b{h}}_{r,t},\tilde{\b{f}}> \in \pi^{\delta- d_0 +\beta} \b{\Lambda}(\mu/2,\b{f},\chi) \overline{\mathbb{Q}}^{\times}.
\]  
That is, for any $\b{g} \in V(\b{f})$ we have $<\b{g},\b{f}> \in \pi^{\delta- d_0 +\beta} \b{\Lambda}(\mu/2,\b{f},\chi) \overline{\mathbb{Q}}^{\times}$. In particular, the same holds for $\b{g} = \b{f}$, and that concludes the proof.
\end{proof}
We are now ready to give the proof of Theorem \ref{Main Theorem on algebraicity}.

\begin{proof}[Proof of Theorem \ref{Main Theorem on algebraicity}] We follow the same steps as in the proof of Theorem \ref{ratio of inner products} but this time we apply Theorem \ref{algebraic properties of Eisenstein series} with selecting $\mu: = \sigma$ there. Note that the restrictions on $\sigma$ in conditions $(i)$ and $(ii)$ are the ones that make the corresponding Eisenstein series nearly holomorphic (compare with conditions in Theorem \ref{algebraic properties of Eisenstein series}). Condition $(iii)$ allows us to use the holomorphic projection operator. Finally, the restrictions on the minimal weight are set so that we can apply the above Theorem \ref{ratio of inner products} on the algebraicity of the ratio of the Petersson inner products. \newline

In exactly the same way as above we obtain
\[
<\b{h}_{r,t}(z_2), \b{f}(z_2)>={^{\overline{\mathbb{Q}}}}^{\times} \pi^{\delta- d_0 +\beta} \b{\Lambda}(\sigma/2,\b{f},\chi),
\]
for some $\b{h}_{r,t} \in \b{N}_{k,S}^n(\overline{\mathbb{Q}})$. In particular we obtain,
\[
<\mathfrak{p}^0(\b{h}_{r,t}(z_2)), \b{f}(z_2)>= {^{\overline{\mathbb{Q}}}}^{\times} \pi^{\delta- d_0 +\beta} \b{\Lambda}(\sigma/2,\b{f},\chi),
\]
where 
$$\f{p}^0 :=\begin{cases} \f{p}, & k\mbox{ not parallel},\\
\f{q} \circ \f{p}, & k\,\,\mbox{parallel}, \end{cases}$$ and $\mathfrak{p}^0(\b{h}_{r,t}(z_2)) \in M_{k,S}$.

Thanks to Theorem \ref{ratio of inner products} the proof will be finished after dividing the above equality by $<\b{f},\b{f}>$ if we make the powers of $\pi$ precise. Recall that
\begin{multline*}
c_{S,k}(\sigma/2 - k/2) ={^{\overline{\mathbb{Q}}}}^{\times} \pi^{dn(n+1)/2} \prod_{v \in \mathbf{a}} \frac{\Gamma_n(\sigma/2 + k_v -l/2 - (n+1)/2)}{\Gamma_n(\sigma/2 +k_v - l/2)}\\
={^{\overline{\mathbb{Q}}}}^{\times} \pi^{dn(n+1)/2} \prod_{v \in \mathbf{a}} \frac{\prod_{i=0}^{n-1}\Gamma(\sigma/2 + k_v -l/2 - (n+1)/2 - i/2)}{\prod_{i=0}^{n-1}\Gamma(\sigma/2 +k_v - l/2 - i/2)} ={^{\overline{\mathbb{Q}}}}^{\times} \pi^{dn(n+1)/2}. 
\end{multline*}
Hence, $\delta = dn(n+1)/2$. However, this is also equal to the dimension of the space $\mathbb{H}_n^d$, which we denoted by $d_0$. We are then left only with $\beta$, which is provided by Theorem \ref{algebraic properties of Eisenstein series}; namely,
\[
\beta =  n \sum_{v \in \mathbf{a}} (k_v - l + \sigma) - de, 
\]
where $e := n^2 + n - \sigma + l/2$ if $2 \sigma -l  \in 2 \mathbb{Z}$ and $\sigma \geq 2n + l/2$, and $e:= n^2$ otherwise. This concludes the proof of the theorem.
\end{proof}

\section{Poincar\'e series of exponential type and holomorphic projection}\label{sec:Sturm}

Our results in the previous section were obtained under the assumption of Property A, which allowed us to obtain Lemma \ref{projection to cuspidal part} and Lemma \ref{holomorphic_projection} above. The first one allowed us to split the Eisenstein series from the cuspidal part while preserving the algebraicity of the coefficients (the operator $\mathfrak{q}$ above), and the second to define a projection from nearly holomorphic Siegel-Jacobi forms to modular forms via the known projection from Siegel modular forms of integral (when $\ell \in 2 \mathbb{Z}$) or half-integral (when $\ell \in \mathbb{Z}$) weight (the operator $\mathfrak{p}$ above). As we indicated above, Property A is known to hold in many cases, as for example in the case of non-parallel weight. However in general, it may not be so easy to check in practice. For this reason in this section we develop a different approach to the holomorphic projection, which does not rely on Property A, and projects the given nearly holomorphic form directly to a cusp form. This new approach is modelled on the one developed by Sturm \cite{St} in the case of Siegel modular forms; and as it is there, Poincar\'e series play a central role. In the first part of this section we study properties of such series in the Jacobi setting.   

In the next section we will apply the results of this section to obtain algebraicity results of the special $L$-values without assuming Property A. For simplicity we will restrict ourselves to the case of $F = \mathbb{Q}$, even though the results of this section should generalize to the case of totally real fields. \newline 

We let $\b{\Gamma}=H(\Z)\rtimes\Gamma$, equipped with a homomorphism $\chi$, where $\Gamma$ is a subgroup of $\Sp_n(\Z)$ of finite index. We denote by $\f{M}^n_{k,S}(\b{\Gamma} ,\chi)$ the space of $C^{\infty}$ (smooth) functions $f:\H_{n,l}\to\C$ such that $f|_{k,S}\, \b{g} =\chi(\b{g})f$ for every $\b{g} \in \b{\Gamma}$. We further let $\lag$ be the smallest positive integer for which
$$\b{\Gamma}_{\infty}:=\{ (0,\mu,0)\begin{pmatrix} 1_n & b\\ & 1_n\end{pmatrix} : \mu\in M_{l,n}(\Z), b\in\lag Sym_n(\Z)\}$$
is contained in the kernel of $\chi$. It follows that $f$ has a Fourier epxansion of the form
\begin{equation}\label{eq:Fexp-in-y}
f(\tau,w)=\sum_{t\in L}\sum_{r\in M_{l,n}(\Z)} A_{t,r}(\Im(\tau)) e\(\frac{1}{\lag}\tr(t\Re(\tau))\) e(\tr(\T{r}w)),
\end{equation}
where $L:=\{ t\in \frac12 Sym_n(\Z): t_{ii}\in\Z\mbox{ for all }i=1,\ldots ,n\}$ and $A_{t,r}$ are $C^\infty$ functions on 
$$Y_n:=\{ y\in M_n(\R): \T{y}=y, y>0\} .$$
Throughout this section we will write $\tau= x+iy$, $w=u+iv$ with $x, y, u, v$ having real entries.

Further, for a positive definite matrix $t\in L$ and $r\in M_{l,n}(\Z)$ such that $4t-\lag S^{-1}[r]>0$, we define the $(t,r)$-th Siegel-Jacobi Poincar\'e series
\begin{equation}
P_{t,r}(\tau,w):=\sum_{\b{g}\in Z_l\b{\Gamma}_{\infty}\back\b{\Gamma}}\overline{\chi(\b{g})}e\(\frac{1}{\lag}\tr(t\tau)\) e(\tr(\T{r}w))|_{k,S}\,\b{g} ,
\end{equation}
where $Z_l:=\{ \pm (0,0,\kappa)1_{2n}:\kappa\in Sym_l(\Z)\} \cap\b{\Gamma}$ is the subgroup of $\b{\Gamma}$ which acts trivially on $\H_{n,l}$. 

\begin{prop}\label{prop:convergence}
The Poincar\'e series $P_{t,r}$ converges absolutely and locally uniformly on $\H_{l,n}$ for $k>n+l+1$. If $k>2n+l$, then $P_{t,r}\in S^n_{k,S}(\b{\Gamma},\chi)$.
\end{prop}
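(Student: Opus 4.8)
The plan is to majorise the series by a Siegel-Eisenstein-type sum. Since the Nebentypus $\chi$ takes values of absolute value one, $|\overline{\chi(\b{g})}|=1$, so it suffices to estimate $\sum_{\b{g}}|(\phi_{t,r}|_{k,S}\b{g})(z)|$, where $\phi_{t,r}(\tau,w):=e(\frac1\lag\tr(t\tau))e(\tr(\T{r}w))$ is the seed function. The first step is the standard $\Delta_{S,k}$-trick: from $\Delta_{S,k}(\b{g}z)=|J_{k,S}(\b{g},z)|^{-2}\Delta_{S,k}(z)$ one gets
\[
|(\phi_{t,r}|_{k,S}\b{g})(z)|\,\Delta_{S,k}(z)^{1/2}=|\phi_{t,r}(\b{g}z)|\,\Delta_{S,k}(\b{g}z)^{1/2},
\]
so the whole analysis reduces to the single function $\Psi(z):=|\phi_{t,r}(z)|\Delta_{S,k}(z)^{1/2}$ evaluated along the $\b{\Gamma}$-orbit. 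Writing $\tau=x+iy$, $w=u+iv$ and completing the square in $v$ (the minimum being attained at $v_{0}=-\tfrac12 S^{-1}ry$) gives
\[
\Psi(\tau,w)=\det(y)^{k/2}\exp\!\big(-2\pi\tr(My)\big)\exp\!\big(-2\pi\tr(S[v-v_{0}]\,y^{-1})\big),\qquad M:=\tfrac{1}{4\lag}\big(4t-\lag S^{-1}[r]\big).
\]
The hypothesis $4t-\lag S^{-1}[r]>0$ is precisely what makes $M$ positive definite, so $\Psi$ is globally bounded on $\mathcal{H}_{n,l}$ and decays rapidly as $y\to\infty$, while the factor carrying $v$ is $\le 1$.

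For convergence I would split the quotient $Z_{l}\b{\Gamma}_{\infty}\backslash\b{\Gamma}$ into its Heisenberg and symplectic parts, choosing representatives $(\lambda,0,0)g$ with $\lambda\in M_{l,n}(\Z)$ and $g\in\Gamma_{\infty}\backslash\Gamma$, where $\Gamma_{\infty}=\{\left(\begin{smallmatrix}1_n&b\\0&1_n\end{smallmatrix}\right): b\in\lag Sym_n(\Z)\}\cap\Gamma$ is the image of $\b{\Gamma}_{\infty}$ in $\Sp_{n}$. For fixed $g$ the inner sum over $\lambda$ is, after the square-completion above, a shifted theta series with Gram form $\sim S\otimes\Im(g\tau)$; it converges by positive definiteness of $S$ and is dominated by a constant times $\max\big(1,\det(\Im(g\tau))^{-l/2}\big)$. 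Substituting $\det(\Im(g\tau))=\det(y)\,|\det(c_{g}\tau+d_{g})|^{-2}$ turns the remaining sum over $\Gamma_{\infty}\backslash\Gamma$ into a majorant of Siegel-Eisenstein type $\sum_{g}|\det(c_{g}\tau+d_{g})|^{-(k-l)}$, in which the $\GL_{n}(\Z)$-redundancy along the Levi (where $|\det(c_g\tau+d_g)|$ is constant) is absorbed by the rapidly decaying factor $\exp(-2\pi\tr(M\Im(g\tau)))$. This reduces the estimate to the full-parabolic Siegel-Eisenstein series, which converges locally uniformly once $k-l>n+1$, i.e. $k>n+l+1$; local uniform convergence then yields holomorphy of $P_{t,r}$ (as a locally uniform limit of holomorphic functions) and, after re-indexing the sum, the transformation law, so that $P_{t,r}$ is a holomorphic Siegel-Jacobi form of the prescribed weight, index and level.

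For the cuspidality statement I would invoke the criterion that a holomorphic $\b{f}\in M^{n}_{k,S}(\b{\Gamma},\chi)$ is a cusp form precisely when $|\b{f}(z)|\Delta_{S,k}(z)^{1/2}$ is bounded on all of $\mathcal{H}_{n,l}$ (the Fourier expansion at each cusp then forces the non--positive-definite coefficients to vanish); this is the Jacobi analogue of the classical Siegel criterion. Since
\[
|P_{t,r}(z)|\,\Delta_{S,k}(z)^{1/2}\le\sum_{\b{g}}\Psi(\b{g}z),
\]
it remains to bound the right-hand side uniformly on $\mathcal{H}_{n,l}$, and not merely on compacta. The same decomposition reduces this to a \emph{uniform} bound for the symplectic majorant; because the theta sum and the Levi sum both degenerate as one approaches the cusps, the decay furnished by $M$ now only suffices under the stronger exponent $k-l>2n$, i.e. $k>2n+l$, which accounts for the gap between the two thresholds.

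The main obstacle in both parts is this convergence estimate itself: one must simultaneously control the Heisenberg theta sum (whose size is governed by $\det\Im(g\tau)$ and degenerates at the cusps) and the $\GL_{n}(\Z)$-redundancy of the symplectic majorant, and verify that the decay supplied by the positive-definite $M$ is exactly enough to pin down the thresholds $n+l+1$ and $2n+l$. Upgrading the locally uniform estimate to a uniform one valid near every cusp, together with the boundedness criterion for cusp forms in the Jacobi setting, is the delicate point; holomorphy and the modular transformation behaviour are then formal consequences of the (locally) uniform convergence.
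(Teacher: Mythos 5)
Your first half (absolute and locally uniform convergence for $k>n+l+1$) is in substance the argument the paper itself invokes: it cites the generalization of \cite[Lemma 2.27]{Bri}, using exactly your decomposition of $Z_l\b{\Gamma}_\infty\back\b{\Gamma}$ into representatives $(\l,0,0)\gamma$, the Gaussian-integral estimate \eqref{eq:cool-id} for the Heisenberg theta sum, and an Eisenstein-type majorant for the symplectic sum. Modulo the caveat that the theta bound $\max\bigl(1,\det(\Im(g\tau))^{-l/2}\bigr)$ and the Levi sum degenerate simultaneously (so ``absorbed by the decaying factor'' needs to be made quantitative), this part is consistent with the paper.

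The cuspidality half, however, has a genuine gap. You reduce cuspidality (via the boundedness criterion of \cite[Lemma 1.3]{Mu89}, which is also what the paper uses) to a uniform bound of $\sum_{\b{g}}\Psi(\b{g}z)$ on all of $\H_{n,l}$, and then simply assert that this bound holds exactly when $k-l>2n$; your closing paragraph concedes that upgrading the locally uniform estimate to a uniform one near every cusp ``is the delicate point.'' That delicate point is the entire content of the second statement, and nothing in your sketch produces the exponent $2n+l$: crude lattice-point estimates for the $\GL_n(\Z)$-sum against the degenerating matrices $\Im(\gamma\tau)$, combined with the theta-sum blow-up $\prod_j\max\bigl(1,\nu_j^{-l/2}\bigr)$ in the eigenvalues $\nu_j$ of $\Im(\gamma\tau)$, lose at least one power of the determinant and yield at best a threshold like $k>2n+l+1$, with uniformity at all cusps still unaddressed. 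The paper avoids the direct estimate entirely: it applies Poisson summation to the generating series $\sum_{t,r}\det(4t-\lag S^{-1}[r])^{k-(l+n+1)/2}P_{t,r}(z_1)e\bigl(\tfrac{1}{\lag}\tr(t\tau_2)\bigr)e(\tr(\T{r}w_2))$, identifying it up to an explicit constant with the kernel $P_{k,S}(z_1,z_2)$ of \eqref{eq:Pks}; it then proves, following Klingen \cite[\S III.6]{Kl}, that $\Delta_{S,k}(z_1)^{1/2}|P_{k,S}(z_1,\hat{z}_2)|$ is bounded by comparing the series termwise with the integral $G(z_1,\hat{z}_2)$ over $\b{\Gamma}\back\H_{n,l}$, using Arakawa's involution $\b{g}\mapsto\tilde{\b{g}}$ \cite{Ar94} and the finiteness criterion of \cite[Lemma 3]{ZWL} --- and it is precisely this integral that converges if and only if $k>2n+l$, which is where the threshold actually comes from. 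Cuspidality of each $P_{t,r}$ then follows because the $P_{t,r}$ are, up to constants, the Fourier coefficients of the kernel in the second variable. To complete your direct approach you would have to carry out the uniform estimate with the sharp exponent, which in effect requires an integral comparison of exactly this kind; otherwise you should adopt the kernel argument.
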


In order to prove this proposition, and also for further calculations, we need the following two identities:

\begin{itemize}
\item For positive integers $k,n$ such that $k>(n-1)/2$ and for $\tau\in \HH_n$,
\begin{equation}\label{eq:int-det}
\int_{Y_n}\det(t)^{k-(n+1)/2}e^{-\tr(t\tau)}dt=\Gamma_n(k)\det (\tau)^{-k},
\end{equation}
where $dt=\prod_{1\leq i\leq j\leq n} dt_{ij}$ and $\Gamma_n(k):=\pi^{n(n-1)/4}\prod_{j=0}^{n-1}\Gamma(k-j/2)$.
\item For a positive definite matrix $S\in Sym_l(\R)$, $R\in M_{n,l}(\R)$, $A\in Sym_n(\C)$ and $a\in\C^{\times}$, 
\begin{multline}\label{eq:cool-id}
\int_{M_{l,n}(\R)} \exp(a\tr(-S[X]A+RXA)) dX\\
=(\det A)^{-l/2}\({\pi\over a}\)^{nl/2}(\det S)^{-n/2}\exp \({a\over 4}\tr(S^{-1}[\T{R}]A)\) .
\end{multline}
\end{itemize}
The first identity is derived in \cite[Lemma 6.2]{Kl}, and the proof of the second one may be found in \cite[Lemma 6.5]{BMarXiv}.

\begin{proof}[Proof of Proposition \ref{prop:convergence}]
The required convergence follows from an easy generalization of \cite[Lemma 2.27]{Bri}, where $n=1$, $l$ arbitrary. For this notice that the coset representatives for $Z_l\b{\Gamma}_{\infty}\back\b{\Gamma}$ are $(\l,0,0)\gamma$, where $\l\in M_{l,n}(\Z)$ and $\gamma$ runs through the coset representatives for $\Gamma_{\infty}\back\Gamma$, and use the identity \eqref{eq:cool-id}.

The modularity property follows from the definition of $P_{t,r}$ and absolute convergence.

In order to prove cuspidality we generalise to the case of Jacobi forms the approach of \cite[section III.6]{Kl}, where the case of Siegel Poincar\'e series is considered. We first note that  it suffices to show that the function 
$$(\det y)^{k/2}\exp (-2\pi\tr(S[v]y^{-1}))|P_{t,r}(\tau ,w)|$$
is bounded on $\H_{n,l}$ (c.f. \cite[Lemma 1.3]{Mu89}; note that this condition is independent of the level of $P_{t,r}$ because of strong approximation theorem). 

First we apply Poisson summation formula to the function
$$\varphi(t,r):=\begin{cases}\det\(\frac{4t}{\lag}-S^{-1}[r]\)^{k-(l+n+1)/2}e\(\frac{1}{\lag}\tr(t\tau)\) e(\tr(\T{r}w)) , & t>0, \frac{4t}{\lag}-S^{-1}[r]>0\\
0, & \mbox{otherwise} 
\end{cases}$$
defined on $Sym_n(\R)\times M_{l,n}(\R)$ and for the lattices 
$$\Lambda :=L \times M_{l,n}(\Z)\quad\mbox{and}\quad
\Lambda ':=Sym_n(\Z)\times M_{l,n}(\Z).$$ 
For $2k-l-n>-1$ we obtain
\begin{align*}
\sum_{(t,r)\in\Lambda}\varphi(t,r)
&=\sum_{(b,\mu)\in\Lambda '}\int_{t>0}e\(\frac{1}{\lag}\tr(t(\tau +\lag b))\) dt\\
&\hspace{2cm}\int_{\frac{4t}{\lag}-S^{-1}[r]>0} \det\(\frac{4t}{\lag}-S^{-1}[r]\)^{k-(l+n+1)/2} e(\tr(\T{r}(w+\mu)))dr \\
&=\lag^{n(n+1)/2}\sum_{(b,\mu)\in\Lambda '}\int_{h>0}e\(\tr(h(\tau +\lag b))\)\det (4h)^{k-(l+n+1)/2} dh\\
&\hspace{2cm}\cdot\int_{M_{l,n}(\R)} e(\tr((4S)^{-1}[r](\tau +\lag b))) e(\tr(\T{r}(w+\mu)))dr\\
&=\lag^{n(n+1)/2}2^{nk-n(l+n+1)}\Gamma_n\( k-\frac{l}{2}\)i^{nk}\det(4S)^{n/2}\\
&\hspace{0.5cm}\cdot\sum_{(b,\mu)\in\Lambda '}\det(\tau +\lag b)^{-k}e(-\tr(S[w+\mu](\tau +\lag b)^{-1}))
\end{align*}
Hence
\begin{multline*}
\hspace{-0.4cm}\sum_{\substack{t\in L\\t>0}}\sum_{4t-\lag S^{-1}[r]>0}\det\(4t-\lag S^{-1}[r]\)^{k-(l+n+1)/2}e\(\frac{1}{\lag}\tr(t(\tau_1+\tau_2))\) e(\tr(\T{r}(w_1-w_2)))\\
=\tilde{C}_{\Gamma ,k,n,S}\sum_{\b{g}\in\b{\Gamma}_{\infty}}\det(\tau_1 +\tau_2)^{-k}e(-\tr(S[w_1-w_2](\tau_1 +\tau_2)^{-1}))|^{(1)}_{k,S}\,\b{g},
\end{multline*}
where $\tilde{C}_{\Gamma ,k,n,S}:=\lag^{n(k-l/2)}2^{nk-n(l+n+1)}\Gamma_n\( k-\frac{l}{2}\)i^{nk}\det(4S)^{n/2}$, and $|^{(1)}_{k,S}$ denotes the action with respect to the first variable, namely $(\tau_1,w_1)$.

Now let 
\begin{equation}\label{eq:Pks}
P_{k,S}(z_1,z_2):=\sum_{\b{g}\in Z_l\back\b{\Gamma}}\overline{\chi(\b{g})}\det(\tau_1 +\tau_2)^{-k}e(-\tr(S[w_1-w_2](\tau_1 +\tau_2)^{-1}))|^{(1)}_{k,S}\,\b{g}.
\end{equation}
As we will shortly show, for a fixed $z_2\in\H_{n,l}$ the series $P_{k,S}(\cdot\, ,z_2)$ is absolutely convergent on $\H_{n,l}$ for $k>2n+l$ (and by symmetry also in other variable when  $z_1$ is fixed); then 
\begin{align*}
P_{k,S}(z_1,z_2)=(\tilde{C}_{\Gamma ,k,n,S})^{-1}\sum_{\substack{t\in L\\t>0}}\sum_{4t-\lag S^{-1}[r]>0}&\det\(4t-\lag S^{-1}[r]\)^{k-(l+n+1)/2}\\ 
& \cdot P_{t,r}(\tau_1,w_1)e\(\frac{1}{\lag}\tr(t\tau_2)\) e(-\tr(\T{r}w_2)).
\end{align*}
We will establish the cuspidality of the Poincar\'e series $P_{t,r}(z_1)$ by showing that $P_{k,S}(z_1,z_2)$ is cuspidal in $z_1$ for every fixed $z_2$. We remark here that various properties of $P_{k,S}$ have already been studied in \cite{Ar94} and \cite[Proposition 2]{ZWL}. The rest of the proof, which is implied but not written in \cite{Ar94}, is based on the approach taken by Klingen in \cite{Kl} in the case of Siegel modular forms and offers a slightly different way of proving the cuspidality of $P_{k,S}$ than \cite{ZWL}; for this reason we have decided to include it.

Similarly to the proof of \cite[Proposition 2 and its Corollary]{Kl}, the rest of the proof is based on the comparison between the above series and the function
\begin{equation*}\begin{split}
G(z_1,\hat{z}_2):=\int_{\b{\Gamma}\back\H_{n,l}}|\det(\tau_1 +\hat{\tau}_2)^{k}\det(\tau_1 -\bar{\tau}_2)^{-k}e(\tr(S[w_1-\hat{w}_2](\tau_1 +\hat{\tau}_2)^{-1}))|\\
\cdot |e(-\tr(S[w_1-\bar{w}_2](\tau_1 -\bar{\tau}_2)^{-1}))|(\Delta_{S,k}(z_2))^{1/2}dz_2 
\end{split}\end{equation*}
defined on $\H_{n,l}\times\H_{n,l}$; 
if $k>2n+l$, then by \cite[Lemma 3 and Lemma 1.(2)]{ZWL} the defining integral is finite for any $z_1,\hat{z}_2\in\H_{n,l}$. Moreover, similarly to \cite[page 80]{Kl}, for any compact subset $\mathcal{K} \subset \mathcal{H}_{n,l}$ there exists a constant $c(\mathcal{K})>0$ such that 
\begin{equation} \label{eq:bound}
G(z_1,\hat{z}_2)\geq c(\mathcal{K})\qquad\mbox{ for}\quad (z_1,\hat{z}_2 ) \in \mathcal{H}_{n,l}\times \mathcal{K}. 
\end{equation}
In order to proceed we recall from \cite[p. 192-193]{Ar94} that the map
\begin{equation*}
\b{g}=(\l,\mu,\kappa)\begin{pmatrix} a & b\\ c & d\end{pmatrix}\quad\longmapsto\quad \tilde{\b{g}}:=(-\l,\mu,-\kappa)\begin{pmatrix} a & -b\\ -c & d\end{pmatrix}
\end{equation*}
defines an involutive automorphism of $\b{G}^{n,l}(\R)$ with the following properties:
$$\tilde{\b{g}}(-\bar{\tau},\bar{w})=(-\overline{g\tau},\overline{\b{g}w}),\qquad\qquad \overline{J_{k,S}(\tilde{\b{g}},(-\bar{\tau},\bar{w}))}=J_{k,S}(\b{g}, (\tau ,w))$$
and (c.f. \cite[Lemma 1.4]{Ar94})
\begin{align*}
&\det(g\tau_1 -\bar{\tau}_2)^{-k}e(-\tr(S[\b{g}w_1-\bar{w}_2](g\tau_1 -\bar{\tau}_2^{-1})))J_{k,S}(\b{g},(\tau_1,w_1))^{-1}=\\
&\det(\tau_1 +\tilde{g}^{-1}(-\bar{\tau}_2))^{-k}e(-\tr(S[w_1-\tilde{\b{g}}^{-1}\bar{w}_2](\tau_1 +\tilde{g}^{-1}(-\bar{\tau}_2)^{-1})))J_{k,S}(\tilde{\b{g}}^{-1},(-\bar{\tau}_2,\bar{w}_2))^{-1} ,
\end{align*}
where $\b{g}w_1:=w_1 \lambda(g,\tau_1)^{-1}+\l g \tau_1+\mu$.

Hence, for $\b{g}=(\l,\mu,0)g\in\b{\Gamma}$ we have
\begin{align*}
G&(\b{g}z_1,\hat{z}_2)|\det(g\tau_1 +\hat{\tau}_2)^{-k}e(-\tr(S[\b{g}w_1-\hat{w}_2](g\tau_1 +\hat{\tau}_2)^{-1}))J_{k,S}(\b{g},z_1)^{-1}|\\
&=\int_{\b{\Gamma}\back\H_{n,l}} |\det(g\tau_1 -\bar{\tau}_2)^{-k}e(-\tr(S[\b{g}w_1-\bar{w}_2](g\tau_1 -\bar{\tau}_2)^{-1}))J_{k,S}(\b{g},z_1)^{-1}|\\
&\hspace{1.5cm}\cdot (\Delta_{S,k}(z_2))^{1/2}dz_2 \\
&=\int_{\b{\Gamma}\back\H_{n,l}} |\det(\tau_1 -\overline{g^{-1}\tau_2})^{-k}e(-\tr(S[w_1-\overline{\b{g}^{-1}w_2}](\tau_1 -\overline{g^{-1}\tau_2})^{-1}))|\\
&\hspace{1.5cm}\cdot |\overline{J_{k,S}(\b{g}^{-1},(\tau_2,w_2))}|^{-1}(|J_{k,S}(\b{g}^{-1},z_2)|^2\Delta_{S,k}(\b{g}^{-1}z_2))^{1/2}dz_2\\
&=\int_{\b{\Gamma}\back\H_{n,l}} |\det(\tau_1 -\overline{g^{-1}\tau_2})^{-k}e(-\tr(S[w_1-\overline{\b{g}^{-1}w_2}](\tau_1 -\overline{g^{-1}\tau_2})^{-1}))|(\Delta_{S,k}(\b{g}^{-1}z_2))^{1/2}dz_2.
\end{align*} 

We now fix a $\hat{z}_2$ and write $\mathcal{C}$ for the constant $c(\mathcal{K})$ appearing in equation (\ref{eq:bound}) above, for some compact set $\mathcal{K} \ni \hat{z}_2$. By triangle inequality, 
\begin{align*}
\mathcal{C} &\cdot |P_{k,S}(z_1,\hat{z}_2)|\\
& \leq \sum_{\b{g} \in Z_l\back\b{\Gamma}} G(\b{g}z_1,\hat{z}_2)|\det(g\tau_1 +\hat{\tau}_2)^{-k}e(-\tr(S[\b{g}w_1-\hat{w}_2](g\tau_1 +\hat{\tau}_2)^{-1}))J_{k,S}(\b{g},z_1)^{-1}|\\
&= \sum_{\b{g} \in Z_l\back\b{\Gamma}} \int_{\b{\Gamma}\back\H_{n,l}} |\det(\tau_1 -\overline{g^{-1}\tau_2})^{-k}e(-\tr(S[w_1-\overline{\b{g}^{-1}w_2}](\tau_1 -\overline{g^{-1}\tau_2})^{-1}))|\\
&\hspace{3cm}\cdot(\Delta_{S,k}(\b{g}^{-1}z_2))^{1/2}dz_2\\
&\leq \int_{\H_{n,l}} |\det(\tau_1 -\bar{\tau}_2)^{-k}e(-\tr(S[w_1-\bar{w}_2](\tau_1 -\bar{\tau}_2)^{-1}))|(\Delta_{S,k}(z_2))^{1/2}dz_2<\infty .
\end{align*}

The finiteness follows again from \cite[Lemma 3]{ZWL}, and thus proves absolute convergence of $P_{k,S}(z_1,\hat{z}_2)$ (because what we really bound here is the series of absolute values of the consecutive terms of $P_{k,S}(z_1,\hat{z}_2)$). 

In fact, a closer look at the proof of \cite[Lemma 3]{ZWL} reveals that if $\b{\xi}\in\b{G}^{n,l}(\R)$ is such that $\b{\xi}(i1_n,0)=(\tau_1,w_1)$ and $c_{k,S}>0$ is a constant as in \cite[p. 716]{ZWL}, then
\begin{multline*}
c_{k,S}^{-1}2^{-nk}\int_{\H_{n,l}} |\det(\tau_1 -\bar{\tau}_2)^{-k}e(-\tr(S[w_1-\bar{w}_2](\tau_1 -\bar{\tau}_2)^{-1}))|(\Delta_{S,k}(z_2))^{1/2}dz_2\\
=|J_{k,S}(\b{\xi}, (i1_n,0))|\int_{\H_{n,l}} |\det(i1_n -\bar{\tau}_2)^{-k}e(-\tr(S[\bar{w}_2](i1_n -\bar{\tau}_2)^{-1}))|(\Delta_{S,k}(z_2))^{1/2}dz_2,
\end{multline*}
where 
$$J_{k,S}(\b{\xi}, (i1_n,0))=\Delta_{S,k}((i1_n,0))^{1/2}\Delta_{S,k}(z_1)^{-1/2}=(\det y_1)^{-k/2}\exp (2\pi\tr(S[v_1]y_1^{-1}))$$
and the integral, now independent of $z_1$, is explicitly computed in the remaining part of the proof and turns out to be finite if $k>2n+l$. This proves that for every fixed $\hat{z}_2$ the function 
\[
(\det y_1)^{k/2}\exp (-2\pi\tr(S[v_1]y_1^{-1}))|P_{k,S}(z_1,\hat{z}_2)| 
\]
is bounded on $\mathcal{H}_{n,l}$, and hence $P_{k,S}(z_1,\hat{z}_2)$ is a cusp form in the $z_1$ variable. From the discussion above on the relation between $P_{k,S}$ and $P_{t,r}$ it follows that the Poincar\'e series $P_{t,r}$ is a cusp form. 
\end{proof}

\begin{cor}\label{cor:kernel}
For $k>2n+l$, for every $z_2\in\H_{n,l}$, the function 
\begin{align*}
K(z_1,z_2):=C_{\b{\Gamma}, k,n,S}^{-1}\sum_{\substack{t\in L\\t>0}}\sum_{4t-\lag S^{-1}[r]>0}&\det(4t-\lag S^{-1}[r])^{k-(n+l+1)/2} P_{t,r}(\tau_1,w_1)\\
&\cdot e\(-\frac{1}{\lag}\tr(t\bar\tau_2)\) e(-\tr(\T{r}\bar w_2))\\
&\hspace{-4.5cm}=C_{\b{\Gamma}, k,n,S}^{-1} \tilde{C}_{\Gamma ,k,n,S} P_{k,S}((\tau_1 ,w_1),(-\bar{\tau}_2,\bar{w}_2)),
\end{align*}
where $P_{k,S}$ is defined in \eqref{eq:Pks}, is absolutely and uniformly convergent in $z_1$ on compact subsets of $\H_{n,l}$ and defines a cusp form in $S_{k,S}^n(\b{\Gamma},\chi)$.
The use of constant 
\begin{equation}\label{eq:kernel constant}
C_{\b{\Gamma}, k,n,S}:=vol(\b{\Gamma}\back\H_{n,l})^{-1}\det(2S)^{-n/2}\Gamma_n\(k-\frac{n+l+1}{2}\)\(\frac{\pi}{\lag}\)^{n(n+l+1)/2-nk}
\end{equation}
will be justified in Theorem \ref{prop:projection}.(a). Moreover, for any $f\in S^n_{k,S}(\b{\Gamma},\chi)$:
$$<f,K(\,\cdot\, ,z_2)> =f(z_2).$$
\end{cor}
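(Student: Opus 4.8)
The plan is to establish the reproducing property one Fourier mode at a time: first compute the Petersson pairing $<f,P_{t,r}>$ of an arbitrary cusp form $f\in S^n_{k,S}(\b{\Gamma},\chi)$ against each Poincar\'e series by the classical Rankin--Selberg unfolding, and then assemble these evaluations against the coefficients of $K$, checking that everything collapses to $f(z_2)$. Throughout write $\varphi_{t,r}(\tau,w):=e(\tfrac1{\lag}\tr(t\tau))\,e(\tr(\T{r}w))$ for the seed of $P_{t,r}$.

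\emph{Unfolding.} Since $f$ is cuspidal and $P_{t,r}$ converges absolutely (Proposition \ref{prop:convergence}), I would insert the definition of $P_{t,r}$ into $<f,P_{t,r}>=\mathrm{vol}(A)^{-1}\int_{\b{\Gamma}\back\H_{n,l}} f\,\overline{P_{t,r}}\,\Delta_{S,k}\,dz$ and interchange sum and integral. Using the cocycle relation for $J_{k,S}$, the identity $\Delta_{S,k}(\b{g}z)=|J_{k,S}(\b{g},z)|^{-2}\Delta_{S,k}(z)$, and the modularity $f|_{k,S}\b{g}=\chi(\b{g})f$, the factors $\chi(\b{g})$ and $J_{k,S}(\b{g},z)$ cancel term by term after the substitution $z\mapsto\b{g}z$, so the sum over $Z_l\b{\Gamma}_{\infty}\back\b{\Gamma}$ unfolds the domain:
\[
<f,P_{t,r}> = \mathrm{vol}(A)^{-1}\int_{Z_l\b{\Gamma}_{\infty}\back\H_{n,l}} f(\tau,w)\,\overline{\varphi_{t,r}(\tau,w)}\,\Delta_{S,k}(\tau,w)\,dz .
\]
A fundamental domain for $\b{\Gamma}_{\infty}$ has $\Re(\tau)$ in the torus $\lag Sym_n(\Z)\back Sym_n(\R)$ and $\Re(w)$ in $M_{l,n}(\Z)\back M_{l,n}(\R)$, with $y=\Im(\tau)\in Y_n$ and $v=\Im(w)\in M_{l,n}(\R)$ unconstrained.

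\emph{Orthogonality and the two integrals.} Inserting the Fourier expansion \eqref{eq:Fexp-in-y} of $f$ and integrating over the two compact tori in $\Re(\tau)$ and $\Re(w)$ annihilates, by orthogonality of additive characters, every Fourier mode but the $(t,r)$-th, leaving the coefficient $c(t,r)$ of $f$ times integrals over $y\in Y_n$ and $v\in M_{l,n}(\R)$. The $v$-integral is the Gaussian $\int\exp(-4\pi\tr(\T{r}v)-4\pi\tr(S[v]y^{-1}))\,dv$, evaluated by \eqref{eq:cool-id} (with $A=y^{-1}$, $a=4\pi$, $R=-y\T{r}$), contributing $(\det y)^{l/2}$, the factor $\exp(\pi\tr(S^{-1}[r]y))$, and $(\det 2S)^{-n/2}$ up to a power of $2$. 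Collecting the powers of $\det y$ — namely $\det(y)^{k}$ from $\Delta_{S,k}$, $\det(y)^{l/2}$ from the $v$-integral and $\det(y)^{-(l+n+1)}$ from the measure — together with the combined exponential $\exp(-\tfrac{\pi}{\lag}\tr((4t-\lag S^{-1}[r])y))$, the remaining $y$-integral is exactly $\int_{Y_n}\det(y)^{\kappa-(n+1)/2}\exp(-\tr(Ty))\,dy$ with $\kappa=k-(n+l+1)/2$ and $T=\tfrac{\pi}{\lag}(4t-\lag S^{-1}[r])$, which \eqref{eq:int-det} evaluates to $\Gamma_n(\kappa)\det(T)^{-\kappa}$. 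Since $\det(T)^{-\kappa}=(\pi/\lag)^{n(n+l+1)/2-nk}\det(4t-\lag S^{-1}[r])^{-\kappa}$, this yields
\[
<f,P_{t,r}> = \kappa_{\b{\Gamma},k,n,S}\,\det(4t-\lag S^{-1}[r])^{-(k-(n+l+1)/2)}\,c(t,r),
\]
where $\kappa_{\b{\Gamma},k,n,S}$ collects $\mathrm{vol}(A)^{-1}$, the torus volumes, $\Gamma_n(k-\tfrac{n+l+1}{2})$, the factor $(\det 2S)^{-n/2}$ and the power $(\pi/\lag)^{n(n+l+1)/2-nk}$.

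\emph{Assembling the kernel.} Substituting into the definition of $K(z_1,z_2)$ and using that $C_{\b{\Gamma},k,n,S}$ is real, conjugation only replaces $e(-\tfrac1{\lag}\tr(t\bar\tau_2))e(-\tr(\T{r}\bar w_2))$ by $e(\tfrac1{\lag}\tr(t\tau_2))e(\tr(\T{r}w_2))$, so applying $<f,\cdot>$ termwise gives
\[
<f,K(\cdot,z_2)> = C_{\b{\Gamma},k,n,S}^{-1}\sum_{t,r}\det(4t-\lag S^{-1}[r])^{k-(n+l+1)/2}\,<f,P_{t,r}>\,e(\tfrac1{\lag}\tr(t\tau_2))\,e(\tr(\T{r}w_2)).
\]
The crucial point is that the determinant weight $\det(4t-\lag S^{-1}[r])^{k-(n+l+1)/2}$ built into $K$ is the exact inverse of the determinant produced by the $y$-integral in $<f,P_{t,r}>$ — this is precisely why the Poincar\'e series must carry that determinant power. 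After this cancellation one is left with $C_{\b{\Gamma},k,n,S}^{-1}\,\kappa_{\b{\Gamma},k,n,S}\sum_{t,r}c(t,r)\,e(\tfrac1{\lag}\tr(t\tau_2))e(\tr(\T{r}w_2))=C_{\b{\Gamma},k,n,S}^{-1}\,\kappa_{\b{\Gamma},k,n,S}\,f(z_2)$, and the constant $C_{\b{\Gamma},k,n,S}$ of \eqref{eq:kernel constant} is chosen to be exactly $\kappa_{\b{\Gamma},k,n,S}$ (this is the matching referred to in Theorem \ref{prop:projection}.(a)), giving $<f,K(\cdot,z_2)>=f(z_2)$. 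The delicate points are the justification of the two interchanges of summation with integration, which rest on the absolute convergence and cuspidal decay from Proposition \ref{prop:convergence}, and the careful bookkeeping of the normalization (torus volumes, the Heisenberg measure normalization, and the powers of $2$ in $\det(2S)$) needed to match $\kappa_{\b{\Gamma},k,n,S}$ with $C_{\b{\Gamma},k,n,S}$ on the nose; the structural heart of the argument, however, is the determinant cancellation in the last step.
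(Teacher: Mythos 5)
Your computation of the reproducing property is, in substance, the paper's own argument: the corollary's proof defers the inner-product formula to Theorem \ref{prop:projection}(a), whose proof is exactly your chain --- unfold $<f,P_{t,r}>$ over $Z_l\b{\Gamma}_{\infty}\back\b{\Gamma}$ (legitimate since cusp forms are of bounded growth for $k>2n+l$), kill all Fourier modes except the $(t,r)$-th by orthogonality, evaluate the $v$-integral by \eqref{eq:cool-id} and the $y$-integral by \eqref{eq:int-det}, and note that the weight $\det(4t-\lag S^{-1}[r])^{k-(n+l+1)/2}$ built into $K$ cancels the determinant produced by \eqref{eq:int-det}, the surviving constant being precisely $C_{\b{\Gamma},k,n,S}$. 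So for the statement $<f,K(\,\cdot\,,z_2)>=f(z_2)$ your route and the paper's coincide, including the constant bookkeeping you flag.

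There is, however, a genuine gap: your proof does not address the rest of the corollary, and what is missing is also what underwrites your ``delicate point''. The corollary asserts (i) the identity $K(z_1,z_2)=C_{\b{\Gamma},k,n,S}^{-1}\tilde{C}_{\Gamma,k,n,S}\,P_{k,S}((\tau_1,w_1),(-\bar{\tau}_2,\bar{w}_2))$ and (ii) that $K(\,\cdot\,,z_2)$ converges absolutely and uniformly on compacta and lies in $S^n_{k,S}(\b{\Gamma},\chi)$; neither appears in your write-up. In the paper, (i) is the Poisson-summation identity derived inside the proof of Proposition \ref{prop:convergence} (evaluated at $(-\bar{\tau}_2,\bar{w}_2)$), and (ii) follows from it together with the bound on $\det(y_1)^{k/2}e^{-2\pi\tr(S[v_1]y_1^{-1})}|P_{k,S}(z_1,\hat{z}_2)|$ established there. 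These are not cosmetic add-ons: to apply $<f,\cdot>$ term by term to the infinite $(t,r)$-sum you must dominate $\sum_{t,r}|\hbox{coeff}|\,|P_{t,r}(z_1)|$ by a function integrable against $|f|\Delta_{S,k}$ over the \emph{non-compact} domain $\b{\Gamma}\back\H_{n,l}$; the locally uniform convergence in $z_1$ that you invoke from Proposition \ref{prop:convergence} is not sufficient for this. The domination is supplied exactly by the identification of the double series with $P_{k,S}$ and its cuspidal decay (equivalently, one can run the pairing against $K$ as a single Poincar\'e-type series, as the paper does). To make your proposal a complete proof of the corollary, you should therefore import the Poisson-summation identity and the cuspidality of $P_{k,S}$ from Proposition \ref{prop:convergence}, state that this gives (i) and (ii), and use it (or the bounded-growth formalism of Theorem \ref{prop:projection}) to justify the interchange rather than asserting it.
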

\begin{proof}
The first statement is a direct consequence of the proof of Proposition \ref{prop:convergence}. In particular, $K(z_1,z_2)$ can be given by two formulas indicated above. The second equality implies that, up to a constant and appearance of $\chi$, it coincides with the function studied in \cite[Proposition 2]{ZWL}. From this it is not difficult to see that the formula for the inner product remains valid in our setting. In fact, this formula naturally comes out in the proof of Theorem \ref{prop:projection} below, where we carry out more general computations.
\end{proof}

\begin{rem}
It seems to us that for convergence reasons the series $K(z_1,z_2)$, when written using the formula \eqref{eq:Pks} for $P_{k,S}$, should be defined as a sum which is taken only over $\b{g}\in Z_l\back\b{\Gamma}$, and not over all $\b{g}\in\b{\Gamma}$ as in \cite{ZWL}. Our definition coincides also with the one used by \cite{Ar94}.
\end{rem}

Corollary \ref{cor:kernel} implies that $K(z_1,z_2)$ is the reproducing kernel for $S^n_{k,S}(\b{\Gamma},\chi)$. This means that in fact
$$K(z_1,z_2)=\sum_{i=1}^d f_i(z_1)\overline{f_i(z_2)},$$
where $\{ f_1,\ldots ,f_d\}$ is an orthonormal basis for $S_{k,S}^n(\b{\Gamma},\chi)$.
As a consequence,
$$<f(z_1),K(z_1,z_2)>=\sum_{i=1}^d <f(z_1),f_i(z_1)>f(z_2)\in S^n_{k,S}(\b{\Gamma},\chi)$$
for any $f\in \f{M}^n_{k,S}(\b{\Gamma} ,\chi)$, as long as the sum is finite. The finiteness condition will follow if we assume that $f$ is of bounded growth. We define this notion as follows: 

We say that a function $f : \mathcal{H}_{n,l} \rightarrow \mathbb{C}$ is of bounded growth, and we write $f \in \mathcal{B}^n_{k,S}(\b{\Gamma})$ if the integral
$$\int_{Y_n}\int_{M_{l,n}(\R)}\int_{X_n}\int_{U_n} |f(z)|e^{-2\pi\tr\(\frac{t}{\lag} y\)}e^{-2\pi\tr (\T{r}v)}\Delta_{S,k}(z)dz$$
is finite for every pair $(t,r) \in \frac{1}{2}Sym_n(\mathbb{Z}) \times M_{l,n}(\mathbb{Z})$, with $t >0$ and $4t -\lag S^{-1}[r] > 0$. Here
$$X_n:=\{ x\in M_n(\R):\T{x}=x, |x_{i,j}|\leq \lambda_{\Gamma}/2 \mbox{ for all } i,j\} ,$$
$$U_n:=\{ u\in M_{l,n}(\R): |u_{i,j}|\leq 1/2 \mbox{ for all } i,j\} ;$$
if it does not lead to confusion, we will omit the subscripts $n$ in $X_n, U_n$ and $Y_n$.
We recall here that $\Delta_{S,k}(z)= \det(y)^k e^{-4 \pi \tr(S[v] y^{-1})}$ and $dz=(\det y)^{-(l+n+1)}dxdydudv$, where $\tau =x+iy$ and $w=u+iv$, and we remark that
$$\{(x+iy,u+iv): x\in X, y\in Y, u\in U, v\in M_{l,n}(\R)\} =\bigcup_{\b{g}\in\b{\Gamma}_{\infty}\back\b{\Gamma}} \b{g}\mathcal{F}_{\b{\Gamma}},$$
where $\mathcal{F}_{\b{\Gamma}}$ is a fundamental domain for $\b{\Gamma}$.

We will be writing $\mathcal{B}^n_{k,S}$ for the union of $\mathcal{B}^n_{k,S}(\b{\Gamma})$ over all congruence subgroups $\b{\Gamma}$. Our definition should be seen as a generalisation of the notion of bounded growth introduced by Sturm in \cite{St} in the case of Siegel modular forms. \newline

We note that for $k > 2n + \ell$, we have $S_{k,S}^n(\b{\Gamma},\chi) \subset \mathcal{B}^n_{k,S}(\b{\Gamma})$. Indeed, if $f \in S_{k,S}^n(\b{\Gamma},\chi)$, then the function $\Delta_{S,k}(z)^{1/2}|f(z)|$ is bounded on $\mathcal{H}_{n,l}$ (see \cite[Lemma 2.6]{Z89}) and hence there exists a constant $M > 0$ such that 
\begin{multline*}
\int_{Y_n}\int_{M_{l,n}(\R)}\int_{X_n}\int_{U_n} |f(z)|e^{-2\pi\tr\(\frac{t}{\lag} y\)}e^{-2\pi\tr (\T{r}v)}\Delta_{S,k}(z)dz \leq \\
M \int_{Y_n}\int_{M_{l,n}(\R)}\int_{X_n}\int_{U_n} e^{-2\pi\tr\(\frac{t}{\lag} y\)}e^{-2\pi\tr (\T{r}v)} \det(y)^{k/2} e^{-2 \pi \tr(S[v] y^{-1})}  dz.
\end{multline*}
The last integral is equal to
\begin{multline*}
vol(X_n \times U_n) \int_{Y_n} \det(y)^{k/2- (l+n+1)}  e^{-2\pi\tr\(\frac{t}{\lag} y\)} \left( \int_{M_{l,n}(\R)} e^{-2\pi\tr (S[v] y^{-1}+\T{r}v)} dv \right) dy\\
\stackrel{\eqref{eq:cool-id}}{=}\det(2S)^{-n/2}vol(X_n \times U_n) \int_{Y_n} \det(y)^{\frac{k-l}{2}- n}  e^{-\frac{\pi}{2 \lag}\tr\( (4t - \lag S^{-1}[r])y \)} d^{\times}y,
\end{multline*}
where $d^{\times}y = \det(y)^{-1}dy$; and is finite since $(4t - \lag S^{-1}[r])y > 0$ and $\frac{k-l}{2}- n > 0$. \newline

We are now ready to state the main result of this section. The following Theorem (except the statement (b)) is a generalization of \cite[Proposition 1 and Theorem 1]{St} to the Jacobi setting. \newline

\begin{thm}\label{prop:projection}
Let $k>2n+l$ and $f\in \f{M}^n_{k,S}(\b{\Gamma} ,\chi)\cap \mathcal{B}^n_{k,S}(\b{\Gamma})$ with the Fourier expansion \eqref{eq:Fexp-in-y}. For $t>0$ and $r\in M_{l,n}(\Z)$ such that $4t-\lambda_\Gamma S^{-1}[r]>0$ we define the coefficient
\begin{align*}
c(t,r):= &\Gamma_n\(k-\frac{n+l+1}{2}\)^{-1}\(\frac{\pi}{\lag}\)^{-n(n+l+1)/2+nk}\det(4t-\lag S^{-1}[r])^{k-(l+n+1)/2}\\
&\cdot\int_{Y_n} A_{t,r}(y)e^{-\pi\tr((\frac{2t}{\lag}- S^{-1}[r])y)}\det (y)^{k-l/2-n-1}dy .
\end{align*}
Then 
\begin{align*}
Hol(f)(z_2):&=<f(z_1),K(z_1,z_2)>\\
&=\sum_{t>0}\sum_{4t-\lambda_\Gamma S^{-1}[r]>0}c(t,r)e\(\frac{1}{\lag}\tr(t\tau_2)\) e(\tr(\T{r}w_2))\in S^n_{k,S}(\b{\Gamma},\chi)
\end{align*}
and 
$$<f,g>=<Hol(f), g>\quad\mbox{ for all }\quad g \in S^n_{k,S}(\b{\Gamma},\chi).$$
Moreover,
\begin{enumerate}
\item[(a)] if $f(\tau,w)=\sum_{t,r}\tilde{c}(t,r)e\(\frac{1}{\lag}\tr(t\tau_2)\) e(\tr(\T{r}w_2))\in S^n_{k,S}(\b{\Gamma},\chi)$, then 
$$<f,P_{t,r}>=C_{\b{\Gamma}, k,n,S}\det(4t-\lag S^{-1}[r])^{-k+(n+l+1)/2}\,\tilde{c}(t,r),$$
where the constant $C_{\b{\Gamma}, k,n,S}$ is as in \eqref{eq:kernel constant}; hence $Hol(f)=f$; 
\item[(b)] if $f(\tau,w)=\sum_{t,r}p_{t,r}(y^{-1})e\(\frac{1}{\lag}\tr(t\tau)\) e(\tr(\T{r}w))\in N_{k,S}^{n,D}(\b{\Gamma},\chi)$ and $k>n+\frac{l}{2}+D$, then
\begin{align*}
<f,P_{t,r}>=&vol(\mathcal{F}_{\b{\Gamma}})^{-1}\det(2S)^{-n/2}\(\frac{\pi}{\lag}\)^{n(n+l+1)/2-nk}\Gamma_n\(k-\frac{n+l+1}{2}-D\)\\
&\cdot R_{t,r}\(-\frac{\partial}{\partial y}\)\left[\det(y)^{-k+D+(n+l+1)/2}\right]_{y=h}
\end{align*}
and so
\begin{align*}
Hol(f)(z_2)=&\Gamma_n\(k-\frac{n+l+1}{2}\)^{-1}\Gamma_n\(k-\frac{n+l+1}{2}-D\)\\
&\cdot\sum_{t,r}e\(\frac{1}{\lag}\tr(t\tau_2)\) e(\tr(\T{r}w_2))\det(4t-\lag S^{-1}[r])^{k-(n+l+1)/2}\\
&\hspace{1cm} \cdot R_{t,r}\(-\frac{\partial}{\partial y}\)\left[\det(y)^{-k+D+(n+l+1)/2}\right]_{y=h} ,
\end{align*}
where $R_{t,r}(y):=(\det y)^{D}p_{t,r}(y^{-1})$ is a polynomial in the entries of $y$, $h:=4t-\lag S^{-1}[r]>0$, $\frac{\partial}{\partial y}:=[\frac12 (1+\delta_{ij}\frac{\partial}{\partial y_{ij}})]_{1\leq i\leq j\leq n}$ and $\delta_{ij}$ denotes Dirac delta.
\end{enumerate}
\end{thm}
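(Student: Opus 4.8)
The plan is to reduce the entire statement to a single computation, the Petersson pairing $<f,P_{t,r}>$ of an arbitrary $f\in\f{M}^n_{k,S}(\b{\Gamma},\chi)\cap\mathcal{B}^n_{k,S}(\b{\Gamma})$ against the Poincar\'e series; all the remaining assertions, including (a) and (b), then follow by feeding this into the two descriptions of the kernel $K(z_1,z_2)$ furnished by Corollary \ref{cor:kernel} and by specialising $f$.

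First I would compute $<f,P_{t,r}>$ by unfolding. Writing $\phi_{t,r}(\tau,w):=e(\frac{1}{\lag}\tr(t\tau))e(\tr(\T{r}w))$, so that $P_{t,r}=\sum_{\b{g}\in Z_l\b{\Gamma}_\infty\backslash\b{\Gamma}}\overline{\chi(\b{g})}\,\phi_{t,r}|_{k,S}\b{g}$, the $\chi$-automorphy of $f$ together with the relation $\Delta_{S,k}(\b{g}z)=|J_{k,S}(\b{g},z)|^{-2}\Delta_{S,k}(z)$ collapses the integral over $\b{\Gamma}\backslash\H_{n,l}$ to the integral of $f\,\overline{\phi_{t,r}}\,\Delta_{S,k}$ over the fundamental domain $X_n\times Y_n\times U_n\times M_{l,n}(\R)$ for $\b{\Gamma}_\infty\backslash\H_{n,l}$. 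Substituting the Fourier expansion \eqref{eq:Fexp-in-y}, the integrations over $x\in X_n$ and $u\in U_n$ kill every term except the one indexed by $(t,r)$, the Gaussian integral over $v\in M_{l,n}(\R)$ is carried out with \eqref{eq:cool-id}, and one is left with
\[
<f,P_{t,r}>=\varkappa\int_{Y_n}A_{t,r}(y)\,e^{-\pi\tr((\frac{2t}{\lag}-S^{-1}[r])y)}\det(y)^{k-l/2-n-1}\,dy,
\]
where $\varkappa$ is an explicit constant depending only on $vol(\b{\Gamma}\backslash\H_{n,l})$, $\det(2S)$ and the volumes of $X_n,U_n$. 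It is exactly the hypothesis $f\in\mathcal{B}^n_{k,S}(\b{\Gamma})$ that guarantees absolute convergence and so legitimises the interchange of summation and integration in this unfolding.

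With this formula in hand, pairing $f$ against the expansion of $K$ in Corollary \ref{cor:kernel} gives the first assertion: conjugating that expansion identifies the $(t,r)$-th Fourier coefficient of $Hol(f)(z_2)=<f(z_1),K(z_1,z_2)>$ with $C_{\b{\Gamma},k,n,S}^{-1}\det(4t-\lag S^{-1}[r])^{k-(n+l+1)/2}<f,P_{t,r}>$, and the normalisation \eqref{eq:kernel constant} is chosen precisely so that this equals $c(t,r)$. That $Hol(f)\in S^n_{k,S}(\b{\Gamma},\chi)$ and that $<f,g>=<Hol(f),g>$ for all cusp forms $g$ I would read off the reproducing-kernel form $K(z_1,z_2)=\sum_i f_i(z_1)\overline{f_i(z_2)}$ in an orthonormal basis $\{f_i\}$ of the finite-dimensional space $S^n_{k,S}(\b{\Gamma},\chi)$: then $Hol(f)=\sum_i<f,f_i>f_i$ lies in the cusp space, while both $<Hol(f),g>$ and $<f,g>$ expand as $\sum_i<f,f_i><f_i,g>$. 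For (a) I specialise to $f\in S^n_{k,S}(\b{\Gamma},\chi)$, so $A_{t,r}(y)=\tilde{c}(t,r)e^{-\frac{2\pi}{\lag}\tr(ty)}$; the exponentials combine to $e^{-\frac{\pi}{\lag}\tr(hy)}$ with $h:=4t-\lag S^{-1}[r]$, and \eqref{eq:int-det} evaluates the $y$-integral to a multiple of $\det(h)^{-(k-(n+l+1)/2)}$, yielding the displayed value of $<f,P_{t,r}>$ and hence $c(t,r)=\tilde{c}(t,r)$, i.e. $Hol(f)=f$; this is also what pins down $C_{\b{\Gamma},k,n,S}$. For (b) I take $f\in N_{k,S}^{n,D}(\b{\Gamma},\chi)$, so $A_{t,r}(y)=p_{t,r}(y^{-1})e^{-\frac{2\pi}{\lag}\tr(ty)}$; writing $p_{t,r}(y^{-1})=\det(y)^{-D}R_{t,r}(y)$ the $y$-integral becomes $\int_{Y_n}R_{t,r}(y)\det(y)^{(k-(n+l+1)/2-D)-(n+1)/2}e^{-\frac{\pi}{\lag}\tr(hy)}\,dy$, and I would pull $R_{t,r}$ out of the integral as the differential operator $R_{t,r}(-\partial/\partial y)$ acting on $\det(\,\cdot\,)^{-(k-(n+l+1)/2-D)}$ via \eqref{eq:int-det}, using $y_{ij}e^{-\tr(hy)}=-\tfrac{\partial}{\partial h_{ij}}e^{-\tr(hy)}$ and the symmetric-matrix convention for $\partial/\partial y$; the hypothesis $k>n+\frac{l}{2}+D$ keeps the exponent above $(n-1)/2$ so that \eqref{eq:int-det} applies.

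The main obstacle is twofold. Analytically, the delicate point is justifying the unfolding, i.e. proving that the series-integral converges absolutely so that termwise integration is valid; this is exactly where membership in $\mathcal{B}^n_{k,S}(\b{\Gamma})$, rather than mere moderate growth, is indispensable. Computationally, the real labour lies in tracking every constant produced by \eqref{eq:cool-id} and \eqref{eq:int-det} — the volumes of $X_n,U_n$, the factors $2^{-nl}$ and $\det(2S)^{-n/2}$, and the powers of $\pi/\lag$ — accurately enough to confirm that $C_{\b{\Gamma},k,n,S}$ as in \eqref{eq:kernel constant} is the unique normalisation making the $(t,r)$-th coefficient equal $c(t,r)$, which is precisely the consistency promised in Corollary \ref{cor:kernel} and verified through part (a).
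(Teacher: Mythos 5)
Your proposal is correct and follows essentially the same route as the paper's proof: unfolding $<f,P_{t,r}>$ against the fundamental domain with the bounded-growth hypothesis justifying the interchange of sum and integral, evaluating the $x,u,v$ integrals via \eqref{eq:cool-id}, then feeding this into the two descriptions of the kernel $K$ from Corollary \ref{cor:kernel} to get the Fourier expansion of $Hol(f)$, the reproducing/adjointness properties via an orthonormal basis, part (a) via \eqref{eq:int-det}, and part (b) via the passage from $p_{t,r}(y^{-1})$ to $R_{t,r}(y)$ and the differential-operator evaluation of the resulting $y$-integral. The only cosmetic difference is that where you derive the Laplace-type integral in (b) by differentiating under the integral sign, the paper simply cites the closed formula in Courtieu--Panchishkin \cite[eq.\ (2.165), p.\ 89]{CP}.
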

\begin{proof}
First we compute the inner product of $f$ and $P_{t,r}$:
\begin{align*}
vol(\mathcal{F}_{\b{\Gamma}})&<f,P_{t,r}>\\
&=\int_{\mathcal{F}_{\b{\Gamma}}}\sum_{\b{g}\in\b{\Gamma}_{\infty}\back\b{\Gamma}}\chi(\b{g})f(z)e(-\tr(\T{r}\overline{\b{g}w}+\frac{t}{\lag}\overline{\b{g}\tau}))\overline{J_{k,S}(\b{g},z)^{-1}}\Delta_{S,k}(z)dz\\
&=\int_{\mathcal{F}_{\b{\Gamma}}}\sum_{\b{g}\in\b{\Gamma}_{\infty}\back\b{\Gamma}}\chi(\b{g})J_{k,S}(\b{g},z)f(z)e(-\tr(\T{r}\overline{\b{g}w}+\frac{t}{\lag}\overline{\b{g}\tau}))|J_{k,S}(\b{g},z)|^{-2}\Delta_{S,k}(z)dz\\
&=\sum_{\b{g}\in\b{\Gamma}_{\infty}\back\b{\Gamma}}\int_{\mathcal{F}_{\b{\Gamma}}} f(\b{g}z)e(-\tr(\T{r}\overline{\b{g}w}+\frac{t}{\lag}\overline{\b{g}\tau}))\Delta_{S,k}(\b{g}z)dz\\
&=\int_Y \int_{M_{l,n}(\R)}\int_X\int_U f(\tau ,w)e(-\tr(\T{r}\bar{w}+\frac{t}{\lag}\bar\tau))\Delta_{S,k}(z)dz <\infty ,
\end{align*}
where the interchange of integration and summation is justified by the bounded growth assumption. Further, since
\begin{align*}
\int_X\int_U &f(\tau ,w)e(-\tr(\T{r}\bar{w}+t\bar\tau/\lag))dudx\\
&=\sum_{t',r'} A_{t',r'}(y)\int_X e(\tr(t'x-t(x-iy))/\lag)dx\int_U e(\tr(\T{r'}(u+iv)-\T{r}(u-iv)))du\\
&=A_{t,r}(y)e^{-2\pi\tr(ty)/\lag}e^{-4\pi\tr(\T{r}v)}
\end{align*}
and
\begin{align*}
\int_Y\int_{M_{l,n}(\R)} &A_{t,r}(y)e^{-2\pi\tr(ty)/\lag}e^{-4\pi\tr(\T{r}v)}\Delta_{S,k}(z)(\det y)^{-(l+n+1)}dvdy\\
&=\int_Y A_{t,r}(y)e^{-2\pi\tr(ty)/\lag}(\det y)^{k-(l+n+1)}dy \int_{M_{l,n}(\R)} e^{4 \pi \tr(-S[v] y^{-1}-y\T{r}vy^{-1})}dv\\
&=\det(2S)^{-n/2}\int_Y A_{t,r}(y)e^{-2\pi\tr(ty)/\lag}e^{\pi\tr(S^{-1}[r]y)}(\det y)^{k-(l/2+n+1)}dy ,
\end{align*}
we obtain a finite quantity
\[
vol(\mathcal{F}_{\b{\Gamma}})<f,P_{t,r}>=\det(2S)^{-n/2}\int_Y A_{t,r}(y)e^{-2\pi\tr(ty)/\lag}e^{\pi\tr(S^{-1}[r]y)}(\det y)^{k-(l/2+n+1)}dy.
\]

If $f\in S^n_{k,S}(\b{\Gamma},\chi)$ with Fourier coefficients $\tilde{c}(t,r)$, then $A_{t,r}(y)=\tilde{c}(t,r)e^{-2\pi\tr(ty)/\lag}$. As we have seen above, $f$ is of bounded growth and by applying identity \eqref{eq:int-det} we obtain 
\begin{multline*}
vol(\mathcal{F}_{\b{\Gamma}})<f,P_{t,r}>=\det(2S)^{-n/2}\int_Y \tilde{c}(t,r)e^{-\pi\tr((\frac{4t}{\lag}- S^{-1}[r])y)}(\det y)^{k-(l/2+n+1)}dy\\
=\det(2S)^{-n/2}\Gamma_n\(k-\frac{n+l+1}{2}\)\pi^{n(n+l+1)/2-nk}\det\(\frac{4t}{\lag}-S^{-1}[r]\)^{-k+(n+l+1)/2}\tilde{c}(t,r) .
\end{multline*}
Hence $Hol(f)=f$, which provides another proof of the second statement in Corollary \ref{cor:kernel}.

Now for any $f\in \f{M}^n_{k,S}(\b{\Gamma} ,\chi)\cap \mathcal{B}^n_{k,S}(\b{\Gamma})$, since $K(z_1,z_2)$ is the reproducing kernel for $S^n_{k,S}(\b{\Gamma},\chi)$, we have
\begin{align*}
< &f(z_1),K(z_1,z_2)>\\
&=C_{\b{\Gamma}, k,n,S}^{-1}\sum_{t>0}\sum_{4t-\lag S^{-1}[r]>0}\det(4t-\lag S^{-1}[r])^{k-(n+l+1)/2} e\(\frac{1}{\lag}\tr(t\tau_2)\) e(\tr(\T{r}w_2))\\
&\hspace{5cm}\cdot <f(z_1), P_{t,r}(z_1)>\\
&=\sum_{t>0}\sum_{4t-\lambda_\Gamma S^{-1}[r]>0}c(t,r)e\(\frac{1}{\lag}\tr(t\tau_2)\) e(\tr(\T{r}w_2))\in S^n_{k,S}(\b{\Gamma},\chi) .
\end{align*}
Furthermore, for any $g\in S^n_{k,S}(\b{\Gamma},\chi)$, 
\begin{align*}
<&Hol(f),g>=\sum_i<f(z_1),f_i(z_1)><f_i(z_2), \sum_j <g(z_1),f_j(z_1)>f_j(z_2)>\\
&=\sum_i <f,f_i>\overline{<g(z_1),f_i(z_1)>}=<f(z_2),\sum_i <g,f_i>f_i(z_2)>=<f(z_2),g(z_2)>.
\end{align*}

If $f\in N_{k,S}^{n,D}(\b{\Gamma},\chi)$ is a nearly holomorphic Jacobi form, then we can also explicitly evaluate the integral over $Y$ (c.f. \cite[eq. (2.165), p. 89]{CP}). Now $A_{t,r}(y)=p_{t,r}(y^{-1})e^{-2\pi\tr(ty)/\lag}$, where $p_{t,r}(y^{-1})$ is a polynomial of total degree $D$ in the entries of $y^{-1}$. Since $y^{-1}=(\det y)^{-1}\T{Adj(y)}$ with $Adj(y)$ denoting the adjoint of $y$, we can replace each homogeneous polynomial of degree $d$ occurring in $p_{t,r}(y^{-1})$ by $(\det y)^{-d} R^d_{t,r}(y)$, where $R^d_{t,r}(y)$ is a polynomial in $y_{ij}$, $1\leq i\leq j\leq n$, of homogeneous degree $d(n-1)$. We adjust it further to $R_{t,r}(y):=\sum_{d=0}^D (\det y)^{D-d} R^d_{t,r}(y)=(\det y)^{D}p_{t,r}(y^{-1})$. Then, as long as $k>n+\frac{l}{2}+D$, 
\begin{multline*}
\det(2S)^{n/2}vol(\mathcal{F}_{\b{\Gamma}})<f,P_{t,r}>=\int_Y R_{t,r}(y)e^{-\pi\tr((\frac{4t}{\lag}-S^{-1}[r])y)}(\det y)^{k-(l/2+n+1)-D}dy\\
=\(\frac{\pi}{\lag}\)^{n(n+l+1)/2-nk}\Gamma_n\(k-\frac{n+l+1}{2}-D\)R_{t,r}\(-\frac{\partial}{\partial y}\)\left[\det(y)^{-k+D+(n+l+1)/2}\right]_{y=h},
\end{multline*}
where $h:=4t-\lag S^{-1}[r]>0$, $\frac{\partial}{\partial y}:=[\frac12 (1+\delta_{ij}\frac{\partial}{\partial y_{ij}})]_{1\leq i\leq j\leq n}$ and $\delta_{ij}$ denotes Dirac delta.
\end{proof}

We can now derive the following important corollary from the calculation done above.

\begin{cor} \label{Hol preserves algebraicity} Let $f \in N_{k,S}^{n,D}(\b{\Gamma},\chi,\overline{\mathbb{Q}}) \cap \mathcal{B}^n_{k,S}(\b{\Gamma})$, with $k > \max\{n + \frac{l}{2} + D, 2n+ l\}$. Then $Hol(f) \in S_{k,S}^n(\b{\Gamma},\chi,\overline{\mathbb{Q}})$.
\end{cor}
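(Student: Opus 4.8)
The plan is to compute the Fourier coefficients of $Hol(f)$ from the explicit formula of Theorem \ref{prop:projection}(b) and to verify that each of them lies in $\overline{\mathbb{Q}}$; since a holomorphic cusp form belongs to $S^n_{k,S}(\b{\Gamma},\chi,\overline{\mathbb{Q}})$ precisely when all of its Fourier coefficients are algebraic, this will finish the proof. The hypotheses $f \in \mathcal{B}^n_{k,S}(\b{\Gamma})$ and $k > \max\{n+\tfrac{l}{2}+D,\,2n+l\}$ are exactly what Theorem \ref{prop:projection} requires in order to produce $Hol(f) \in S^n_{k,S}(\b{\Gamma},\chi)$ together with the closed expression for its coefficient $c(t,r)$ as the product of the Gamma-quotient $\Gamma_n(k-\tfrac{n+l+1}{2})^{-1}\Gamma_n(k-\tfrac{n+l+1}{2}-D)$, of $\det(h)^{k-(n+l+1)/2}$, and of $R_{t,r}(-\tfrac{\partial}{\partial y})\big[\det(y)^{-k+D+(n+l+1)/2}\big]$ evaluated at the argument coming from the underlying Gamma-integral, where $h = 4t - \lag S^{-1}[r]$ and $R_{t,r}(y) = (\det y)^D p_{t,r}(y^{-1})$. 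Because we are over $F = \mathbb{Q}$, with $t \in L$, $r$ integral and $S$ of rational entries, the matrix $h$ has rational entries, so $\det(h) \in \mathbb{Q}_{>0}$ and $h^{-1} \in M_n(\mathbb{Q})$.

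First I would dispose of the factors that are visibly benign. The Gamma-quotient is $\pi$-free, since the power $\pi^{n(n-1)/4}$ in $\Gamma_n(x) = \pi^{n(n-1)/4}\prod_{j=0}^{n-1}\Gamma(x-\tfrac{j}{2})$ cancels and the bound $k > n + \tfrac{l}{2} + D$ turns the surviving quotient into $\prod_{j=0}^{n-1}\prod_{m=1}^{D}\big(k-\tfrac{n+l+1}{2}-\tfrac{j}{2}-m\big)^{-1}$, a nonzero rational number. Applying the differential operator $R_{t,r}(-\tfrac{\partial}{\partial y})$ to $\det(y)^{\alpha}$, with $\alpha = -k+D+\tfrac{n+l+1}{2} \in \tfrac12\mathbb{Z}$, produces $\det(y)^{\alpha}$ times a polynomial in the entries of $y^{-1}$ whose coefficients are polynomials in $\alpha$ (hence rational) multiplied by the coefficients of $R_{t,r}$; after multiplication by $\det(h)^{k-(n+l+1)/2}$ the fractional powers of the determinant telescope to $\det(h)^{D}$, an integer power of a positive rational, and the matrix $h^{-1}$ at which one evaluates is rational. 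Thus everything is reduced to the arithmetic nature of the coefficients of $R_{t,r}$, equivalently of the polynomials $p_{t,r}$.

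The genuinely delicate point — which I expect to be the crux — is the accounting of the powers of $\pi$ carried by $p_{t,r}$. By the definition of $N^{n,D}_{k,S}(\overline{\mathbb{Q}})$ via the theta decomposition of Theorem \ref{Structure of nearly holomorphic Siegel-Jacobi} and Shimura's arithmetic structure on nearly holomorphic Siegel forms (\cite[p.~117]{Sh00}), the relevant rationality is the one preserved by the Maass--Shimura operators; consequently, written in the variable $y^{-1}$, the homogeneous degree-$j$ part of $p_{t,r}$ has coefficients not in $\overline{\mathbb{Q}}$ but in $\pi^{-j}\overline{\mathbb{Q}}$. The key observation is that these graded powers of $\pi$ cancel exactly: the argument at which the Gamma-integral in Theorem \ref{prop:projection}(b) is evaluated is $\tfrac{\pi}{\lag}h$ rather than $h$, so the homogeneous piece of $R_{t,r}$ responsible for the factor $\pi^{-j}$ acquires a compensating $\pi^{j}$ from the rescaling of both the determinant power and the derivatives, making the total power of $\pi$ the same for every $j$; combined with the explicit power of $\pi/\lag$ occurring in that theorem, the net power of $\pi$ in $c(t,r)$ is $0$. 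Once this cancellation is established, every remaining factor in $c(t,r)$ is algebraic, so $c(t,r) \in \overline{\mathbb{Q}}$ and $Hol(f) \in S^n_{k,S}(\b{\Gamma},\chi,\overline{\mathbb{Q}})$. A reassuring check on the whole $\pi$-bookkeeping is that when $f$ is obtained from a holomorphic cusp form by a Maass--Shimura raising operator the formula must return $Hol(f) = 0$, which it does, the cancellation there being already visible term by term.
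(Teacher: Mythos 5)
Your proposal is correct, and its overall strategy --- read off the coefficients $c(t,r)$ of $Hol(f)$ from the explicit formula in Theorem \ref{prop:projection}(b) and verify that each factor (the Gamma-quotient, the determinant powers, the evaluation of the differential operator at a rational matrix) is algebraic --- is exactly the strategy of the paper's own proof. The substantive difference is your accounting of powers of $\pi$, and there your version is the more precise one. The paper's proof simply asserts that the polynomials $p_{t,r}(y^{-1})$ have algebraic coefficients and applies the displayed formula of Theorem \ref{prop:projection}(b) verbatim; but since $N_{k,S}^{n,D}(\b{\Gamma},\chi,\overline{\mathbb{Q}})$ is defined through the theta decomposition and Shimura's arithmetic structure on $N^{n,D}_{k-l/2}$ --- the structure for which Shimura's Theorem 17.9 and Lemma 28.2 (both invoked elsewhere in the paper) actually hold --- the degree-$j$ homogeneous part of $p_{t,r}$ in $y^{-1}$ lies in $\pi^{-j}\overline{\mathbb{Q}}$, exactly as you say. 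Correspondingly, the single factor $(\pi/\lag)^{n(n+l+1)/2-nk}$ displayed in Theorem \ref{prop:projection}(b) is exact only on the top homogeneous component of $R_{t,r}$: the Laplace-type integral is really evaluated at $\tfrac{\pi}{\lag}h$, so the component of degree $nD-j$ carries an extra factor $(\pi/\lag)^{j}$. Your degree-by-degree cancellation of these two graded effects, leaving $c(t,r)$ with net $\pi$-power zero, is precisely what reconciles the corollary with the paper's actual definition of rationality for nearly holomorphic forms; the paper's shorter argument reaches the same true conclusion only because its two imprecisions (ungraded algebraicity of $p_{t,r}$, and the ungraded power of $\pi$ in the integral formula) compensate each other. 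Your closing consistency check on images of Maass--Shimura operators is a sound confirmation of the bookkeeping.
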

\begin{proof} This follows immediately from the Theorem above. Indeed, if we write 
\[
f(\tau,w)=\sum_{t,r}p_{t,r}(y^{-1})e\(\frac{1}{\lag}\tr(t\tau)\) e(\tr(\T{r}w)),
\]
then the polynomials $p_{t,r}(y^{-1})$ have algebraic coefficients. Since $k > n+ \frac{l}{2}+D$,
\[
\frac{\Gamma_n\(k-\frac{n+l+1}{2}-D\)}{\Gamma_n\(k-\frac{n+l+1}{2}\)} \in \mathbb{Q}^{\times},
\]
because $\Gamma(m+\frac{1}{2})$ is a rational multiple of $\sqrt{\pi}$ for any $m \in \mathbb{N}$. Then by Theorem \ref{prop:projection} (b) above, $Hol(f)$ has Fourier expansion with coefficients in $\overline{\mathbb{Q}}$ since (in the notation of the Theorem) $R_{t,r}\(-\frac{\partial}{\partial y}\)\left[\det(y)^{-k+D+(n+l+1)/2}\right]_{y=h} \in \overline{\mathbb{Q}}$, as $h=4t-\lag S^{-1}[r] > 0$ is a symmetric positive definite matrix with rational  entries for all $(t,r)$.
\end{proof}

We conclude this section with another Corollary of Theorem \ref{prop:projection}, which we will use in the next section.
Namely, if the nearly holomorphic Siegel-Jacobi form has a particular kind of Fourier expansion, then the required bound on the weight $k$ can be improved.

\begin{cor} \label{improved bound projection} Let $f \in N_{k,S}^{n,D}(\b{\Gamma},\chi,\overline{\mathbb{Q}}) \cap \mathcal{B}^n_{k,S}(\b{\Gamma})$ and assume that $D = m \, n$ for some $m \in \mathbb{Z}_+$. Assume further that 
\[
f(\tau,w)=\det(y)^{-m} \sum_{t,r} Q_{t,r}(y)e\(\frac{1}{\lag}\tr(t\tau)\) e(\tr(\T{r}w)),
\]
for some polynomials $Q_{t,r}(y) \in \overline{\mathbb{Q}}[y_{ij}]$, that is, the polynomials in the entries of $y$, with algebraic coefficients. Then $Hol(f) \in S^n_{k,S}(\b{\Gamma},\chi, \overline{\mathbb{Q}})$ if $k > \max\{n + \frac{l}{2} + m,2n+l\}$.
\end{cor}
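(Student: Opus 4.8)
The plan is to re-run the evaluation of the Fourier coefficient $c(t,r)$ of $Hol(f)$ carried out in Theorem \ref{prop:projection}, but to exploit the prescribed shape of the expansion before the $y$-integral is computed. Since $f\in\mathcal{B}^n_{k,S}(\b{\Gamma})$, the pairing $<f,P_{t,r}>$ is given by the absolutely convergent integral obtained in the proof of Theorem \ref{prop:projection}, and substituting $A_{t,r}(y)=\det(y)^{-m}Q_{t,r}(y)e^{-2\pi\tr(ty)/\lag}$ the factor $\det(y)^{-m}$ merges with the $(\det y)^{k-(l/2+n+1)}$ already present. Thus I would first record that
\[
vol(\mathcal{F}_{\b{\Gamma}})<f,P_{t,r}>=\det(2S)^{-n/2}\int_Y Q_{t,r}(y)\,e^{-\pi\tr(hy/\lag)}(\det y)^{k-m-(l/2+n+1)}\,dy,\qquad h:=4t-\lag S^{-1}[r]>0.
\]

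The crucial observation, and the source of the improved bound, is that $Q_{t,r}(y)$ is already a polynomial in the entries of $y$, so there is no need for the auxiliary rewriting $R_{t,r}(y)=(\det y)^{D}p_{t,r}(y^{-1})$ used in Theorem \ref{prop:projection}(b), which is precisely what forces the determinant exponent to drop by the full $D=mn$. Here only $m$ powers of $\det(y)$ are consumed, so the identity \eqref{eq:int-det}, applied through the differential operator $Q_{t,r}(-\partial/\partial y)$, converges and evaluates the integral as soon as $k-m-(l/2+n+1)$ is large enough, i.e. $k>n+l/2+m$; together with the requirement $k>2n+l$ guaranteeing that $P_{t,r}$ is a cusp form (Proposition \ref{prop:convergence}), this yields the stated bound $k>\max\{n+l/2+m,\,2n+l\}$. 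The evaluation returns $\Gamma_n(k-m-(n+l+1)/2)$ times $Q_{t,r}(-\partial/\partial y)[\det(y)^{-k+m+(n+l+1)/2}]_{y=h}$, up to an explicit power of $\pi/\lag$.

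Finally I would assemble $c(t,r)$ from its definition in Theorem \ref{prop:projection} and argue the algebraicity exactly as in Corollary \ref{Hol preserves algebraicity}: the powers of $\pi/\lag$ cancel against the prefactor, the ratio $\Gamma_n(k-m-(n+l+1)/2)/\Gamma_n(k-(n+l+1)/2)$ lies in $\mathbb{Q}^{\times}$ (being a product of quotients of Gamma values at arguments differing by integers or half-integers), and $Q_{t,r}(-\partial/\partial y)[\det(y)^{-k+m+(n+l+1)/2}]_{y=h}\in\overline{\mathbb{Q}}$ because $Q_{t,r}$ has algebraic coefficients and $h$ is a rational positive definite matrix (recall $F=\mathbb{Q}$ and $S$ is rational). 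Hence $c(t,r)\in\overline{\mathbb{Q}}$ for every admissible $(t,r)$, so $Hol(f)\in S^n_{k,S}(\b{\Gamma},\chi,\overline{\mathbb{Q}})$. I expect the main obstacle to be bookkeeping rather than conceptual: one must check that pulling $\det(y)^{-m}$ through the integrand is legitimate (which is guaranteed by the absolute convergence coming from the bounded-growth hypothesis) and, above all, that the effective determinant exponent really decreases by only $m$ and not by $D$, since it is exactly this point that separates the present corollary from Corollary \ref{Hol preserves algebraicity}.
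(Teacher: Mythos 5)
Your proposal is correct and follows essentially the same route as the paper's own proof: substitute $A_{t,r}(y)=\det(y)^{-m}Q_{t,r}(y)e^{-2\pi\tr(ty)/\lag}$ into the integral from Theorem \ref{prop:projection}, observe that the determinant exponent drops only by $m$ rather than $D$ (precisely because no rewriting via the adjoint $R_{t,r}(y)=(\det y)^{D}p_{t,r}(y^{-1})$ is needed), evaluate by the polynomial Laplace-transform formula (the paper cites \cite[eq.\ (2.165)]{CP}, which is exactly your differentiated form of \eqref{eq:int-det}), and conclude via the rationality of $\Gamma_n\(k-\frac{n+l+1}{2}-m\)/\Gamma_n\(k-\frac{n+l+1}{2}\)$ and the algebraicity of $Q_{t,r}\(-\frac{\partial}{\partial y}\)\left[\det(y)^{-k+m+(n+l+1)/2}\right]_{y=h}$. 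The only cosmetic caveat is your parenthetical about Gamma quotients at arguments ``differing by half-integers'': what matters here is that the arguments differ by the integer $m$, which is what makes each quotient rational.
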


\begin{proof} This follows from the proof of Theorem \ref{prop:projection} (b) after observing that in the notation used there, $A_{t,r}(y) = \det(y)^{-m} Q_{t,r}(y)e^{-2\pi\tr(ty)/\lag}$. In particular the relevant integrals are of the form
\begin{multline*}
\int_{Y} A_{t,r}(y)e^{-\pi\tr((\frac{2t}{\lag}-S^{-1}[r])y)}\det (y)^{k-l/2-n-1}dy = \\
\int_{Y} Q_{t,r}(y)e^{-\pi\tr((\frac{4t}{\lag}-S^{-1}[r])y)}\det (y)^{k-l/2-n-1-m}dy,
\end{multline*}
and thus we can evaluate it using the formula in \cite[eq. (2.165), p. 89]{CP} directly, without the need to apply the transformation involving the adjoint of $y$; that is, it suffices to assume that $k > n + \frac{l}{2}+m$ to obtain that it is equal to
\[
\(\frac{\pi}{\lag}\)^{n(n+l+1)/2-nk}\Gamma_n\(k-\frac{n+l+1}{2}-m\)Q_{t,r}\(-\frac{\partial}{\partial y}\)\left[\det(y)^{-k+m+(n+l+1)/2}\right]_{y=h},
\]
where $h:=4t-\lag S^{-1}[r]>0$. The corollary now follows by observing that we still have
\[
\frac{\Gamma_n\(k-\frac{n+l+1}{2}-m\)}{\Gamma_n\(k-\frac{n+l+1}{2}\)} \in \mathbb{Q}^{\times},
\]

which concludes the proof. 
\end{proof}

\begin{rem}
The construction of holomorphic projection by Sturm \cite{St}, which we extended to the Siegel-Jacobi setting in Theorem \ref{prop:projection}, was generalized by Courtieu and Panchishkin in \cite[Theorem 2.16]{CP}. Under a weaker growth condition, which they call ``of moderate growth'', they were able to define a projection to holomorphic (not necessarily cuspidal) Siegel modular forms. It seems that a similar extension is possible in the Siegel-Jacobi setting, and one could introduce there a notion of moderate growth. One of the reasons we haven't pursue this here is that the crucial difficulty to derive any applications lies in establishing which functions are actually of the required growth. Therefore we content ourselves with the case which already provides the necessary tool for our purposes. 
\end{rem}

In the next section, Lemma \ref{Bounded growth} provides a sufficient condition for bounded growth of certain $C^{\infty}$ functions, which is then verified in Proposition \ref{Eisenstein growth} in case of nearly holomorphic Siegel-Jacobi Eisenstein series.

\section{Algebraicity results without assuming Property A}

In this last part we use the theory developed in the previous section to obtain algebraicity results without assuming Property A. In particular, the aim is to establish the following theorem, whose proof will be concluded in the end of the section. We note that in comparison to Theorem \ref{Main Theorem on algebraicity}, the bound on $k$ is slightly weaker. This is because we need to ensure that the growth condition is met, in order to be able to apply the holomorphic projection discussed above. As it was mentioned in the introduction, the main tool to establish the theorem below is the doubling identity of Theorem \ref{thm:dm_identity}, however the approach for proving the algebraicity of the ratio of the Petersson inner products is different (compare the proofs of Theorem \ref{ratio of inner products} above with Theorem \ref{ratio of inner products_v2} below). Finally, as we indicated above, we  restrict ourselves to the case of $F = \mathbb{Q}$, but we expect that our results could be extended to totally real fields (see also Remark \ref{extension to totally real} below). 

We fix a fractional ideal $\mathfrak{b}$ and an integral ideal $\mathfrak{c}$ of $\mathbb{Q}$, and write $\b{\Gamma}$ for the congruence subgroup introduced in section 3. 

\begin{thm}\label{Main Theorem on algebraicity v2}
Let $n >1$, and assume that $S$ satisfies property $M_{\f{p}}^+$ for all prime ideals $\mathfrak{p}$ of $\mathbb{Q}$ prime to $\mathfrak{c}$. Let $0 \neq \b{f} \in S_{k,S}^{n}(\b{\Gamma},\overline{\mathbb{Q}})$ be an eigenfunction of all $T(\f{a})$, where $k > 6n +2l +1$,  and $\chi$ a Dirichlet character such that $\chi(-1) =(-1)^k$.
Let $\sigma \in  \mathbb{Z}$ be such that 
\begin{enumerate}
\item[(i)] $k/2 -2n -l/2 > \sigma/2  > n +l/2 +1$, 
\item[(ii)] $ \sigma  - k  \in 2 \mathbb{Z}$,
\end{enumerate}
Under these conditions
\[
\frac{\b{\Lambda}(\sigma/2,\b{f},\chi)}{\pi^{e_{\sigma}} <\b{f},\b{f}>} \in \overline{\mathbb{Q}},
\]
where  
$$e_{\sigma} =  n (k - l + \sigma) - e,\quad e := \begin{cases}  n^2 + n - \sigma + l/2, & \mbox{if }  l  \in 2 \mathbb{Z}, \\ n^2, & \mbox{otherwise}. \end{cases} $$
\end{thm}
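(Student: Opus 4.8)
The plan is to carry out the same doubling-identity argument as in the proof of Theorem~\ref{Main Theorem on algebraicity}, evaluating the identity of Theorem~\ref{thm:dm_identity} at $s=\sigma/2$ with the given $\chi$, but to replace the two points where that proof used Property~A — the projection of a nearly holomorphic form to a cusp form through Siegel forms, and the splitting off of the cuspidal part — by the intrinsic holomorphic projection $Hol$ of Section~\ref{sec:Sturm}. Two preliminary results must be in place, both established in the present section: the bounded-growth statement of Proposition~\ref{Eisenstein growth} (via Lemma~\ref{Bounded growth}), ensuring that the diagonal restriction of the relevant nearly holomorphic Jacobi Eisenstein series lies in $\mathcal{B}^n_{k,S}(\b{\Gamma})$, and the Property-A-free analogue Theorem~\ref{ratio of inner products_v2} of Theorem~\ref{ratio of inner products}, giving $<\b{f},\b{g}>/<\b{f},\b{f}>\in\overline{\mathbb{Q}}$ for $\b{g}\in M^n_{k,S}(\overline{\mathbb{Q}})$; it is the proof of the latter that uses CM points in the Jacobi setting, taking over the role that the theta decomposition played before.

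Granting these, I first observe that hypotheses (i) and (ii) are exactly the conditions of Theorem~\ref{algebraic properties of Eisenstein series} after replacing $n$ by $2n$: the lower bound $\sigma/2>n+l/2+1$ gives $\sigma>2n+l+2$, which makes $\sigma-l/2-(2n+1)/2>0$ so that (ii) collapses to $\sigma-k\in2\mathbb{Z}$, and which simultaneously rules out every excluded case (a)--(d) there (recall $F=\mathbb{Q}$ has class number one). Hence
\[
\mathcal{E}(z,\sigma/2):=\Lambda^{2n}_{k-l/2,\mathfrak{c}}(\sigma/2-l/4,\chi\psi_S)\,E^{2n}(z,\sigma/2;\chi)\in\pi^{\beta}N^{2n,r}_{k,S}(\overline{\mathbb{Q}}),
\]
with $r=n(k-\sigma)$ and $\beta=n(k-l+\sigma)-e$ for $e$ as in the statement. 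The twist by $\b{\rho}=1_H\rho$ preserves this, and Lemma~\ref{diagonal restriction preserves algebraicity} yields $\pi^{-\beta}(\mathcal{E}|_{k,S}\b{\rho})(\diag[z_1,z_2],\sigma/2)=\sum_i\b{f}_i(z_1)\b{g}_i(z_2)$ with $\b{f}_i,\b{g}_i\in N^{n,r}_{k,S}(\overline{\mathbb{Q}})$.

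The crux is the projection in $z_2$. The near-holomorphy degree $D=n(k-\sigma)$ is a multiple of $n$, and the explicit Fourier expansion of the nearly holomorphic Siegel Eisenstein series underlying Theorem~\ref{algebraic properties of Eisenstein series} is of the special shape $\det(y)^{-(k-\sigma)}\sum_{t,r}Q_{t,r}(y)\,e(\cdots)$ with $Q_{t,r}$ polynomial in the entries of $y$; this is precisely the hypothesis of the sharpened Corollary~\ref{improved bound projection} with $m=k-\sigma$, whose weight requirement $k>\max\{n+l/2+m,\,2n+l\}$ collapses to $\sigma>n+l/2$ and $k>2n+l$, both implied by (i). Together with the bounded-growth input, $Hol$ therefore maps the pieces arising here into $S^n_{k,S}(\b{\Gamma},\overline{\mathbb{Q}})$. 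It is exactly the verification of bounded growth for the degree-$2n$ Eisenstein series (Proposition~\ref{Eisenstein growth}) that needs the heavier weight bound $k>6n+2l+1$ and the upper constraint $\sigma/2<k/2-2n-l/2$; this accounts for the stronger hypotheses relative to Theorem~\ref{Main Theorem on algebraicity}.

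Feeding this into the doubling identity and comparing $(t,r)$-Fourier coefficients of $\b{f}$ as in the proof of Theorem~\ref{ratio of inner products}, with the non-vanishing \eqref{eq:non-vanishing of L-values} (valid since $\sigma/2>n+l/2+1$), produces finitely many $\b{h}_{r,t}\in N^{n,r}_{k,S}(\overline{\mathbb{Q}})\cap\mathcal{B}^n_{k,S}$ with $Hol(\b{h}_{r,t})\in S^n_{k,S}(\b{\Gamma},\overline{\mathbb{Q}})$ and $<Hol(\b{h}_{r,t}),\b{f}>={}^{\overline{\mathbb{Q}}^{\times}}\pi^{a}\b{\Lambda}(\sigma/2,\b{f},\chi)\neq0$ for an explicit $a$. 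The $\pi$-bookkeeping is identical to that of Theorem~\ref{Main Theorem on algebraicity}: for $F=\mathbb{Q}$ one has $c_{S,k}(\sigma/2-k/2)={}^{\overline{\mathbb{Q}}^{\times}}\pi^{n(n+1)/2}$ and $vol(A)=\pi^{n(n+1)/2}\mathbb{Q}^{\times}$, these two cancel, and the Eisenstein normalization leaves the exponent so that, after dividing by $<\b{f},\b{f}>$ and using Theorem~\ref{ratio of inner products_v2} to render the resulting ratio algebraic, one arrives at $\b{\Lambda}(\sigma/2,\b{f},\chi)/(\pi^{e_\sigma}<\b{f},\b{f}>)\in\overline{\mathbb{Q}}$ with $e_\sigma$ as stated. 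I expect the main obstacle to be the bounded-growth analysis: proving that the diagonal restriction of the degree-$2n$ nearly holomorphic Eisenstein series genuinely lies in $\mathcal{B}^n_{k,S}$, uniformly enough to pass through the reproducing-kernel projection of Theorem~\ref{prop:projection}, is delicate and is what dictates the weight range; the secondary difficulty is the CM-point argument behind Theorem~\ref{ratio of inner products_v2}, which must recover intrinsically the period information that Property~A previously furnished for free.
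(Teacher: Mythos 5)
Your overall architecture is the right one: conditions (i)--(ii) are exactly what place $\sigma$ in the range where Theorem~\ref{algebraic properties of Eisenstein series} (with $n$ replaced by $2n$) and the bounded-growth statement of Proposition~\ref{Eisenstein growth} apply, the bound $k>6n+2l+1$ is what makes that range nonempty, and the final division by $<\b{f},\b{f}>$ via Theorem~\ref{ratio of inner products_v2} together with the $\pi$-bookkeeping is as in the paper. The genuine gap is at what you yourself call the crux, the projection step. You form, exactly as in the proof of Theorem~\ref{ratio of inner products}, the combinations $\b{h}_{r,t}=\sum_i\alpha_{i,r,t}\b{g}_i$ out of the tensor decomposition $\pi^{-\beta}(\mathcal{E}|_{k,S}\b{\rho})(\diag[z_1,z_2],\sigma/2)=\sum_i\b{f}_i(z_1)\b{g}_i(z_2)$ supplied by Lemmas~\ref{diagonal restriction preserves algebraicity} and~\ref{diagonal bounded}, and then apply Corollary~\ref{improved bound projection} to them. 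But the hypothesis of that corollary is that the function being projected has Fourier coefficients of the special shape $\det(y)^{-m}Q_{t,r}(y)$ with $Q_{t,r}\in\overline{\mathbb{Q}}[y_{ij}]$ a polynomial in the entries of $y$ itself. This shape is known for the full degree-$2n$ series $G$ (Lemma~\ref{lem:G}), because $G$ is built from holomorphic Eisenstein series by Shimura--Maass operators; it is \emph{not} known for the individual pieces $\b{f}_i,\b{g}_i$ coming out of the abstract basis-and-linear-algebra decomposition, hence not for your $\b{h}_{r,t}$. This is precisely the obstruction the paper flags in the discussion preceding Theorem~\ref{ratio of inner products_v2}: one does not know whether the $\b{f}_i,\b{g}_i\in N^{n,D}_{k,S}$ have coefficients of the form $\det(y)^{-D/2n}P(y)$. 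Nor can you fall back on Corollary~\ref{Hol preserves algebraicity}: it requires $k>n+l/2+D$ with $D=n(k-\sigma)$, which fails for most $\sigma$ in the claimed range (for instance $\sigma$ near the lower bound $2n+l+2$ with $k$ large), so only the improved corollary can give the full theorem.

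The device that closes this gap, which your proposal relegates to a secondary difficulty hidden inside Theorem~\ref{ratio of inner products_v2}, is in fact the main chain of the paper's proof of Theorem~\ref{Main Theorem on algebraicity v2} itself: instead of comparing Fourier coefficients in $z_1$, one evaluates $G_*$ at CM points $\b{\omega}=(\omega,v\Omega_\omega)$ and divides by the CM period $\mathfrak{P}_k(\omega)$. Evaluation in $z_1$, unlike the tensor decomposition, visibly preserves the special Fourier shape in $z_2$, and Shimura's results on nearly holomorphic forms at CM points make the resulting polynomial coefficients algebraic; this is Lemma~\ref{Special Fourier Coefficients}, and it is what legitimizes applying Corollary~\ref{improved bound projection} to $g_{\omega,v}$, producing $\b{h}_{\omega,v}=Hol(g_{\omega,v})\in S^n_{k,S}(\overline{\mathbb{Q}})$ with $<\b{h}_{\omega,v},\b{f}>\in\pi^{\delta-d_0+\beta}\,\b{\Lambda}(\sigma/2,\b{f},\chi)\,\overline{\mathbb{Q}}^{\times}$, using that $\mathfrak{P}_k(\omega)^{-1}\b{f}^c_*(\b{\omega})\in\overline{\mathbb{Q}}^{\times}$ for suitable $(\omega,v)$; the spanning argument then rests on the density of CM points in $\mathbb{H}_n$ rather than on nonvanishing of Fourier coefficients. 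Replacing your coefficient-comparison step by this CM evaluation repairs the proof and recovers the paper's argument.
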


\vspace{0.5cm}

First, in a few steps, we will establish the range for $s$ in which the Eisenstein series $E^{2n}|_{k,S}\b{\rho}(z,s,\chi)$ is of bounded growth.

\begin{lem} \label{Bound Jacobi form} Let $g\in M^n_{k,S}(\b{\Gamma}' ,\psi)$ be a Jacobi form of weight $k \in \frac{1}{2}\mathbb{Z}$ for some congruence subgroup $\b{\Gamma}'$ and character $\psi$. Then there exists a positive real constant $c_1$ such that 
\[
|g(z)| = |g(\tau,w)| \leq c_1   \left(\prod_{i=1}^n (1- \lambda_j^{-k}) \right)  e^{2 \pi \tr(y^{-1}S[v])},
\]
where $\lambda_j$ are the eigenvalues of $y$. Here we write $\tau = x + i y$ and $w = u + i v$. 
\end{lem}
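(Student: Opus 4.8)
The plan is to deduce the bound from the corresponding classical estimates for Siegel modular forms and for theta series through the theta decomposition of Theorem~\ref{Structure of holomorphic Siegel-Jacobi}. Writing $g(\tau,w)=\sum_{h\in\Lambda_1/\Lambda_2}f_h(\tau)\,\Theta_{2S,\Lambda_2,h}(\tau,w)$ with $f_h\in M^n_{k-l/2}$ (the index set $\Lambda_1/\Lambda_2$ being finite, so that the constant $c_1$ may absorb the number of terms), it suffices to bound $|f_h(\tau)|$ and $|\Theta_{2S,\Lambda_2,h}(\tau,w)|$ separately and then multiply. The overall shape is forced by the observation that $\Delta_{S,k}(z)^{1/2}|g(z)|$ is $\b{\Gamma}'$-invariant, since $\Delta_{S,k}(\b{g}z)=|J_{k,S}(\b{g},z)|^{-2}\Delta_{S,k}(z)$ while $|g(\b{g}z)|=|J_{k,S}(\b{g},z)|\,|g(z)|$ for a unitary character $\psi$; as $\Delta_{S,k}(z)^{1/2}=\det(y)^{k/2}e^{-2\pi\tr(y^{-1}S[v])}$, the factor $e^{2\pi\tr(y^{-1}S[v])}$ appearing in the statement is precisely the contribution of the $w$-variable, and the product over the eigenvalues $\lambda_j$ of $y$ records the growth in $\tau$. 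For this to be a genuine (positive) upper bound the product must be read as $\prod_j(1+\lambda_j^{-k})$ rather than $\prod_j(1-\lambda_j^{-k})$, and it is the former that the argument below produces.

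For the theta factor I would take imaginary parts inside the defining sum. Setting $\xi:=x+h$, a summand of $\Theta_{2S,\Lambda_2,h}(\tau,w)$ has absolute value $\exp(-2\pi(\tr(yS[\xi])+2\tr(Sv\,\T{\xi})))$, and completing the square via $\xi\mapsto\xi+vy^{-1}$ gives the identity $\tr(yS[\xi])+2\tr(Sv\,\T{\xi})=\tr(yS[\xi+vy^{-1}])-\tr(y^{-1}S[v])$. Hence
\[
|\Theta_{2S,\Lambda_2,h}(\tau,w)|\le e^{2\pi\tr(y^{-1}S[v])}\sum_{\xi}\exp\!\big(-2\pi\tr(yS[\xi+vy^{-1}])\big),
\]
where $\xi$ runs over the coset $\Lambda_2+h$. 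The remaining lattice sum is a shifted positive-definite Gaussian sum in the $\tau$-variable alone; comparing it with the Gaussian integral \eqref{eq:cool-id} — uniformly in the real shift $c=vy^{-1}$ — bounds it by $C\prod_j(1+\lambda_j^{-l/2})$, the exponent $l/2$ reflecting the $l$ lattice directions attached to each small eigenvalue of $y$.

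For the Siegel factor I would invoke the standard growth estimate for holomorphic Siegel modular forms of weight $k-l/2$ (valid also for half-integral weight when $l$ is odd): on a Minkowski-reduced domain the Fourier expansion together with the Koecher principle bounds $|f_h|$ by a constant, and transporting to an arbitrary $\tau$ by modularity yields $|f_h(\tau)|\le C\prod_j(1+\lambda_j^{-(k-l/2)})$, the per-eigenvalue exponent being the full weight (as already in genus one, where a non-cuspidal form grows like $y^{-(k-l/2)}$ as $y\to 0$). Multiplying the two estimates and using that $\lambda_j^{-(k-l/2)}$ and $\lambda_j^{-l/2}$ are simultaneously $\ge 1$ or $\le 1$, so that $(1+\lambda_j^{-(k-l/2)})(1+\lambda_j^{-l/2})\le 4(1+\lambda_j^{-k})$, produces the asserted bound. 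The main obstacle is to make these two inputs genuinely uniform: the theta sum must be controlled uniformly over the shift $c=vy^{-1}\in M_{l,n}(\mathbb{R})$, and the Siegel estimate must be secured with the correct per-eigenvalue exponent across the whole upper half-space (including the half-integral-weight case). Once these uniformities are in hand, the rest is a routine assembly of constants, the finitely many $h$, and the elementary two-factor inequality above.
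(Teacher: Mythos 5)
Your route is genuinely different from the paper's. The paper never invokes the theta decomposition: it transplants Sturm's argument directly to the Jacobi setting, observing that $\phi(z)=\det(y)^{k/2}e^{-2\pi\tr(y^{-1}S[v])}|g(z)|$ is $\b{\Gamma}'$-invariant, choosing the fundamental domain $\b{\Omega}=\{(\tau,\lambda\tau+\mu):\tau\in\mathcal{F}_{\Gamma'},\ |\lambda_{ij}|,|\mu_{ij}|\le 1\}$ on which the relevant quantity is bounded, and then converting $\det(y_1)^{k/2}$ at the reduced point into $\prod_j(\lambda_j^{k/2}+\lambda_j^{-k/2})$ by \cite[Proposition 2]{St}, which after dividing by $\det(y)^{k/2}$ gives exactly $\prod_j(1+\lambda_j^{-k})$. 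Several things in your proposal are right and worth keeping: the observation that the statement must read $\prod_j(1+\lambda_j^{-k})$ (the minus sign is indeed a typo, and the paper's own proof produces the plus sign), the completion of the square for the theta factor, the invocation of \cite[Corollary 1]{St} for the Siegel factor, and the elementary inequality $(1+a)(1+b)\le 4(1+ab)$ for $a,b$ on the same side of $1$.

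The genuine gap is your estimate for the shifted Gaussian lattice sum, namely $\sum_{\xi\in\Lambda_2+h+vy^{-1}}\exp(-2\pi\tr(yS[\xi]))\le C\prod_j(1+\lambda_j^{-l/2})$. Comparison with the Gaussian integral \eqref{eq:cool-id} cannot yield this: integral comparison produces determinant-type bounds, and these are strictly weaker than eigenvalue-product bounds once $y$ is far from reduced. Concretely, for $l=1$, $S=1$, $n=2$ and $y$ the rotation by $45$ degrees of $\diag[\epsilon,\epsilon^{-1}]$, the sum over $\mathbb{Z}^2$ is of size $\epsilon^{-1/2}$ while $\det(y)=1$, so no bound of the shape $C\bigl(1+\det(y)^{-l/2}\bigr)$ holds; the quantity that controls the sum is the set of successive minima of the form $\xi\mapsto\tr(yS[\xi])$ on the lattice, not its eigenvalues, and passing from minima to eigenvalues requires a Hadamard--Minkowski argument (partial products of minima dominate partial products of square roots of the smallest eigenvalues). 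This is precisely the reduction-theoretic content that \cite[Proposition 2]{St} packages and that the paper uses directly; your route does not avoid it, it relocates it into the theta factor and then asserts it. You also misplace the difficulty: uniformity in the shift $c=vy^{-1}$ is the easy part, since by Poisson summation the shifted Gaussian sum is dominated by the unshifted one (the Fourier transform of a Gaussian is positive), whereas the eigenvalue bound is the real work. Finally, a scope restriction: Theorem \ref{Structure of holomorphic Siegel-Jacobi} requires $n>1$ or $F\neq\mathbb{Q}$ (for $n=1$, $F=\mathbb{Q}$ the $f_h$ need not satisfy the cusp condition), so your argument proves the lemma only for $n>1$; that suffices for the application in Proposition \ref{Eisenstein growth}, but the lemma as stated, and the paper's proof of it, carry no such restriction.
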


\begin{proof}

The proof of this lemma is similar to an analogue statement proved in \cite[Corollary 1, (B)]{St}. Indeed, we define the function $\phi(z) := \det(y)^{k/2} e^{-2 \pi \tr(y^{-1}S[v])} |g(z)|$, where as usual we write $z =(\tau,w)$ with $\tau = x+iy$ and $w = u + i v$. Then $\phi (\gamma z) = \phi(z)$ for all $\gamma \in \b{\Gamma}'$. 
We write $\b{\Gamma}' = H(\Z)\rtimes\Gamma'$ and we choose 
$$\b{\Omega}:=\{ (\tau,\lambda\tau +\mu): \tau\in\mathcal{F}_{\Gamma'} , \lambda ,\mu\in M_{l,n}(\R), \forall_{i,j}\, |\lambda|,|\mu|\leq 1\}$$
as a fundamental domain of $  \b{\Gamma}' \back \H_{n,l}$, where $\mathcal{F}_{\Gamma'}$ is Siegel's fundamental domain for $\Gamma' \back \mathbb{H}_n$. Then there exists a constant $c$ depending only on $g$ such that $|g(z)| \leq c$ for all $z \in \b{\Omega}$, and hence in particular thanks to the invariance of $\phi$ we have
$\phi(\gamma z) \leq c \det(y)^{k/2}   e^{-2 \pi \tr(y^{-1}S[v])}$ for all $z \in \b{\Omega}$ and $\gamma \in \b{\Gamma}'$.

We now consider any $z$. We pick a $\gamma$ such that $\gamma z \in \b{\Omega}$. Then 
\[
\phi(z) = \phi(\gamma^{-1} (\gamma \, z)) \leq c  \det(y_1)^{k/2}   e^{-2 \pi \tr(y_1^{-1}S[v_1])},
\] 
where we have written $\gamma (\tau,w) = (\tau_1,w_1)$. By \cite[Proposition 2]{St} there is a constant $c_0$ such that
\[
\phi(z) \leq  c_0 \left( \prod_{j=1}^n (\lambda_j^{k/2} + \lambda_j^{-k/2} )\right) e^{-2 \pi \tr(y_1^{-1}S[v_1])},
\] 
where $\lambda_j$ are the eigenvalues of $y$. From the description of $\b{\Omega}$, we may take $v_1 = \lambda y_1$ where $\lambda \in M_{l,n}(\mathbb{R})$ with $|\lambda_{ij}| \leq 1$, that is,
\[
e^{-2 \pi \tr(y_1^{-1}S[v_1])} = e^{-2 \pi \tr(S[\lambda] y_1)} .
\]  
But this last expression is bounded for $y_1 \in \b{\Omega}$ since $y_1 > \epsilon 1_n$ for some $\epsilon > 0$ and $\lambda_{ij}$ take values in a compact set. Hence, in fact, there is a constant $c_1 \in \mathbb{R}_+$ such that
\[
\phi(z) \leq  c_1 \left( \prod_{j=1}^n (\lambda_j^{k/2} + \lambda_j^{-k/2} )\right),
\]
and hence we obtain our claim by using $\phi(z) = \det(y)^{k/2} e^{-2 \pi \tr(y^{-1}S[v])} |g(z)|$. 
\end{proof}

\begin{lem} \label{Bounded growth} Let $h\in \mathfrak{M}^n_{k,S}(\b{\Gamma}' ,\psi)$, for some congruence subgroup $\b{\Gamma}'$ and character $\psi$. Assume $h(z) = g(\tau) \phi(\tau,w)$ for some nearly holomorphic Siegel modular form $g(\tau)$ of weight $k-l/2$ and a holomorphic Jacobi form $\phi(\tau,w)$ of weight $l/2$. Then $h\in \mathcal{B}^n_{k,S}(\b{\Gamma}')$ if 
\[
\int_{Y_n}\int_{X_n}|g(\tau) | \left(\prod_{i=1}^n (1- \lambda_j^{-l/2}) \right) e^{-\pi  ( \epsilon \tr(y)) } (\det y)^{(k-l/2)-(n+1)} dxdy < \infty
\]
for all $\epsilon > 0$, where $\lambda_j = \lambda_j(y)$ are the eigenvalues of the symmetric matrix $y>0$.
\end{lem}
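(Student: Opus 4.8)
The plan is to substitute the factorization $h=g\,\phi$ directly into the integral defining bounded growth and to use the exponential bound for Jacobi forms from Lemma~\ref{Bound Jacobi form} in order to tame the integration in the variable $v$. Concretely, fix a pair $(t,r)$ with $t>0$ and $4t-\lag S^{-1}[r]>0$; I must show that
\[
I(t,r):=\int_{Y_n}\int_{M_{l,n}(\R)}\int_{X_n}\int_{U_n} |h(z)|\,e^{-2\pi\tr(\frac{t}{\lag}y)}e^{-2\pi\tr(\T{r}v)}\Delta_{S,k}(z)\,(\det y)^{-(l+n+1)}\,du\,dx\,dv\,dy
\]
is finite, where $\Delta_{S,k}(z)=\det(y)^k e^{-4\pi\tr(S[v]y^{-1})}$.

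First I would apply Lemma~\ref{Bound Jacobi form} to the holomorphic Jacobi form $\phi$ of weight $l/2$ and index $S$ to obtain
\[
|h(z)|=|g(\tau)|\,|\phi(\tau,w)|\le c_1\,|g(\tau)|\prod_{j=1}^n\bigl(1-\lambda_j^{-l/2}\bigr)\,e^{2\pi\tr(y^{-1}S[v])},
\]
where $\lambda_j=\lambda_j(y)$ are the eigenvalues of $y$. The essential point is that the factor $e^{2\pi\tr(y^{-1}S[v])}$ exactly halves the Gaussian weight $e^{-4\pi\tr(S[v]y^{-1})}$ coming from $\Delta_{S,k}$, leaving the strictly integrable weight $e^{-2\pi\tr(S[v]y^{-1})}$ in $v$. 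Since every factor is now nonnegative, Tonelli's theorem justifies all interchanges of integration.

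Next, because the resulting bound no longer depends on $u$, the $U_n$-integration contributes only the finite constant $\vol(U_n)$, while the $v$-integration is a Gaussian which I evaluate with identity~\eqref{eq:cool-id}, exactly as in the verification that $S^n_{k,S}(\b{\Gamma}')\subset\mathcal{B}^n_{k,S}(\b{\Gamma}')$ recorded above, namely
\[
\int_{M_{l,n}(\R)} e^{-2\pi\tr(S[v]y^{-1}+\T{r}v)}\,dv=\det(2S)^{-n/2}\det(y)^{l/2}e^{\frac{\pi}{2}\tr(S^{-1}[r]y)}.
\]
Collecting the powers of $\det(y)$, which reduce to $(k-l/2)-(n+1)$, together with the exponentials in $y$, I obtain, up to a positive constant $C$,
\[
I(t,r)\le C\int_{Y_n}\int_{X_n}|g(\tau)|\prod_{j=1}^n\bigl(1-\lambda_j^{-l/2}\bigr)\,e^{-\frac{\pi}{2}\tr((\frac{4t}{\lag}-S^{-1}[r])y)}\det(y)^{(k-l/2)-(n+1)}\,dx\,dy.
\]

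Finally, the hypothesis $4t-\lag S^{-1}[r]>0$ is precisely what makes $M:=\frac{4t}{\lag}-S^{-1}[r]$ positive definite, so that $\tr(My)\ge\lambda_{\min}(M)\tr(y)$ for all $y>0$; taking $\epsilon:=\tfrac12\lambda_{\min}(M)>0$ gives $e^{-\frac{\pi}{2}\tr(My)}\le e^{-\pi\epsilon\tr(y)}$, and the last integral is then bounded by the quantity assumed finite in the statement for this particular $\epsilon$. This proves $h\in\mathcal{B}^n_{k,S}(\b{\Gamma}')$. The one genuinely delicate step is the cancellation of the $v$-exponentials: the hard part is to be sure that the bound of Lemma~\ref{Bound Jacobi form} leaves a strictly convergent (rather than merely borderline) Gaussian in $v$, and that completing the square produces exactly the factor $e^{+\frac{\pi}{2}\tr(S^{-1}[r]y)}$, whose growth in $y$ is then absorbed by the positivity of $4t-\lag S^{-1}[r]$ to yield genuine exponential decay in $\tr(y)$.
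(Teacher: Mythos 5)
Your proof is correct and follows essentially the same route as the paper's: bound $|\phi|$ by Lemma~\ref{Bound Jacobi form}, use the resulting factor $e^{2\pi\tr(S[v]y^{-1})}$ to halve the Gaussian weight coming from $\Delta_{S,k}$, integrate out $u$ and $v$ via identity~\eqref{eq:cool-id} to land on the integral with weight $(\det y)^{(k-l/2)-(n+1)}$ and exponent $-\frac{\pi}{2}\tr\bigl((\tfrac{4t}{\lambda_{\Gamma'}}-S^{-1}[r])y\bigr)$, and then invoke positivity of $4t-\lambda_{\Gamma'}S^{-1}[r]$ to compare with the hypothesis. Your explicit choice $\epsilon=\tfrac12\lambda_{\min}\bigl(\tfrac{4t}{\lambda_{\Gamma'}}-S^{-1}[r]\bigr)$ merely makes precise the final step that the paper leaves implicit.
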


\begin{proof}

By Lemma \ref{Bound Jacobi form}, a sufficient condition on the bounded growth of $h(z)$ can be given by finiteness of the following integral: 
\begin{align*}
&\int_{Y_n}\int_{M_{l,n}(\R)}\int_{X_n}\int_{U_n} |g(\tau) | \left(\prod_{i=1}^n (1- \lambda_j^{-l/2}) \right)   e^{2 \pi \tr(S[v]y^{-1})} e^{-2 \pi\tr\(\frac{t_0}{\lagp}y+\T{r_0}v\)}\Delta_{S,k}(z)dz\\
&=vol(U_n)\int_{Y_n}\int_{X_n}|g(\tau) | \left(\prod_{i=1}^n (1- \lambda_j^{-l/2}) \right) e^{-2 \pi\tr\(\frac{t_0}{\lagp}y\)}(\det y)^{k}\\
&\hspace{8cm}\cdot\int_{M_{l,n}(\R)} e^{-2 \pi \tr(S[v]y^{-1}+\T{r_0}v)}dv d\tau\\
&\stackrel{\eqref{eq:cool-id}}{=}vol(U_n)\det(2S)^{-n/2}\\ 
&\hspace{1cm}\int_{Y_n}\int_{X_n}|g(\tau) | \left(\prod_{i=1}^n (1- \lambda_j^{-l/2}) \right) e^{- \frac{\pi}{2\lagp} (\tr (4t_0 - \lagp S^{-1}[r_0])y) } (\det y)^{(k-l/2)-(n+1)} dxdy.
\end{align*}
Since $4t_0 -\lagp S^{-1}[r_0] > 0 $, the above expression is finite if 
\[
\int_{Y_n}\int_{X_n}|g(\tau) | \left(\prod_{i=1}^n (1- \lambda_j^{-l/2}) \right) e^{-\pi  ( \epsilon \tr(y)) } (\det y)^{(k-l/2)-(n+1)} dxdy < \infty
\]
for all $\epsilon > 0$, where we recall that $\lambda_j = \lambda_j(y)$ are the eigenvalues of the symmetric matrix $y>0$. This is similar to the condition of Sturm for a weight $k-l/2$ nearly holomorphic form multiplied by a holomorphic modular form of weight $l/2$ (compare with the proof of \cite[Corollary 2]{St}). There is however a difference in the power of the determinant. In \cite{St} we have $k-(n+1)$ and in our case we have $k-l/2 -(n+1)$.  \end{proof}

\begin{prop} \label{Eisenstein growth} With notation as in section 3,  
\[
E^{2n}|_{k,S}\b{\rho}(z,s,\chi) \in \mathcal{B}^{2n}_{k,S}
\]
for $s \in \frac{1}{2}\mathbb{Z}$ such that $\frac{k-l}{2} - 2n > s > \frac{2n+1 + l/2}{2}$.
\end{prop}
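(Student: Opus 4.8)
The plan is to reduce the claim to a growth estimate for a \emph{classical} Siegel Eisenstein series and then to run the argument behind Lemma \ref{Bounded growth}. As established in \cite[Section 8]{BMana} (and recalled in the proof of Theorem \ref{algebraic properties of Eisenstein series}), throughout its range of convergence the degree-$2n$ Jacobi Eisenstein series is a finite sum
\[
E^{2n}(z,s,\chi)=\sum_{i} g_{i}(\tau)\,\Theta_{i}(\tau,w),
\]
where each $g_i$ is a Siegel Eisenstein series $E^{2n}(\tau,s-l/4;\chi\psi_S\psi_i)$ of weight $k-l/2$ on $\mathbb{H}_{2n}$ and each $\Theta_i$ is a holomorphic theta series of weight $l/2$ and index $S$. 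Applying $|_{k,S}\b{\rho}$ and using that $\b{\rho}=1_H\rho$ with $\rho\in\Sp_{2n}$ (as in the proof of Theorem \ref{ratio of inner products}), one again obtains a finite sum of the same shape, with $g_i$ replaced by the translates $g_i|_k\rho$, still Siegel Eisenstein series by \cite[p. 153]{Sh94b}, and the $\Theta_i$ replaced by finite combinations of theta series. Since bounded growth is stable under finite sums, it suffices to prove it for a single product $g(\tau)\phi(\tau,w)$ with $g$ a Siegel Eisenstein series of weight $k-l/2$ and $\phi$ a holomorphic Jacobi form of weight $l/2$.

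For such a product the computation in the proof of Lemma \ref{Bounded growth} applies verbatim: that proof uses only that $\phi$ is a holomorphic weight-$l/2$ Jacobi form (through the bound of Lemma \ref{Bound Jacobi form}) and never the near-holomorphy of the first factor. Hence, with $n$ replaced by $2n$, bounded growth of $g(\tau)\phi(\tau,w)$ follows once we show that for every $\epsilon>0$
\[
\int_{Y_{2n}}\int_{X_{2n}}|g(\tau)|\Big(\prod_{j=1}^{2n}\big(1-\lambda_j^{-l/2}\big)\Big)e^{-\pi\epsilon\tr(y)}(\det y)^{(k-l/2)-(2n+1)}\,dx\,dy<\infty,
\]
where $\lambda_j=\lambda_j(y)$ are the eigenvalues of $y>0$. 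The lower bound $s>\tfrac{2n+1+l/2}{2}$ is precisely $\Re(s-l/4)>\tfrac{2n+1}{2}$, i.e.\@ the half-plane of absolute convergence of the degree-$2n$ Siegel Eisenstein series $g$; we may therefore replace $|g|$ by its majorant, the positive real-analytic Eisenstein series of parameter $s-l/4$, without affecting the question.

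It then remains to bound this majorant and to check integrability. Recall that the region $X_{2n}\times Y_{2n}$ (together with the $u,v$ variables) covers $\bigcup_{\b{g}\in\b{\Gamma}_\infty\backslash\b{\Gamma}}\b{g}\mathcal{F}_{\b{\Gamma}}$, so after unfolding the integral becomes an integral of the Eisenstein majorant against $(\det y)^{(k-l/2)-(2n+1)}$ and the factor $\prod_j(1-\lambda_j^{-l/2})$. The factor $e^{-\pi\epsilon\tr(y)}$ makes the cusp $y\to\infty$ harmless, so the only contribution to control is the growth of the weight $k-l/2$ Siegel Eisenstein series on the translates of the fundamental domain; this is exactly the situation treated by Sturm in the Siegel case. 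Combining the pointwise growth estimate for modular forms \cite[Proposition 2]{St} with the Eisenstein bound \cite[Corollary 2]{St}, the integral converges as soon as the weight $k-l/2$ exceeds the relevant linear expression in the degree $2n$ and the parameter $s-l/4$, and a direct substitution turns this into the upper bound $s<\tfrac{k-l}{2}-2n$.

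The main obstacle is exactly this last step: one must produce a growth estimate for the real-analytic Siegel Eisenstein series of weight $k-l/2$ and parameter $s-l/4$ that is uniform over the translates $\b{g}\mathcal{F}_{\b{\Gamma}}$ covered by the box, and then verify that the exponent $(k-l/2)-(2n+1)$ of $\det y$, once corrected by the growth exponent of the Eisenstein series and by the unbounded behaviour of $\prod_j(1-\lambda_j^{-l/2})$ as eigenvalues degenerate, lands in the convergent range precisely when $s<\tfrac{k-l}{2}-2n$. Everything else---the theta decomposition, the reduction via the argument of Lemma \ref{Bounded growth}, and the stability of bounded growth under finite sums and under the right translation by $\b{\rho}$---is formal.
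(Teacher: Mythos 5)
Your proposal is correct and follows essentially the same route as the paper's proof: both decompose $E^{2n}|_{k,S}\b{\rho}(z,s,\chi)$ via \cite[Section 8]{BMana} (Heim's trace identity) into finitely many products of weight-$(k-l/2)$ Siegel Eisenstein series with weight-$l/2$ holomorphic theta series, feed each product into the criterion of Lemma \ref{Bounded growth}, and extract the range $\frac{k-l}{2}-2n > s > \frac{2n+1+l/2}{2}$ by adapting \cite[Proposition 2 and Corollary 2]{St} to the shifted exponent $(k-l/2)-(2n+1)$ of $\det y$. The only point the paper addresses that you omit is that when $l$ is odd the weight $k-l/2$ is half-integral, so Sturm's integral-weight estimates apply only after observing that everything involves absolute values and $|h(\gamma,z)|=|j(\gamma,z)|^{1/2}$ (cf. \cite[Theorem A.2.4 (2)]{Sh00}); this is a minor remark rather than a gap in your argument.
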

\begin{proof} 
We write $\b{\Gamma}'$ for the congruence subgroup of the Eisenstein series $E^{2n}(z,s,\chi)$. As it was shown in \cite[Section 8]{BMana}, generalizing a previous result of Heim in \cite[Theorem 3.6]{Heim}, the Eisenstein series $E^{2n}(z,s,\chi)$ can be expressed as the trace from a congruence subgroup $\b{\Gamma_1} \subset \b{\Gamma}'$ of some finite index $m$ of a form $E_{k-l/2}^{2n}(\tau,s-l/4) \theta (\tau,w)$, where $E_{k-l/2}^{2n}(\tau,s-l/4)$ is a Siegel type Eisenstein series of weight $k-l/2$, and $\theta(\tau,w)$ a theta series in $M_{l/2,S}$; that is, 
\[
E^{2n}(z,s,\chi) = \sum_{i=1}^m E_{k-l/2}^{2n}(\tau,s-l/4) \theta (\tau,w) |_{k,S} \b{\gamma_i} ,
\]
where the sum is over a set of representatives $\b{\gamma}_i \in \b{\Gamma}' / \b{\Gamma_1}$. From this we obtain
\[
E^{2n}|_{k,S}\b{\rho}(z,s,\chi) = \sum_{i=1}^m E_{k-l/2}^{2n}(\tau,s-l/4) \theta (\tau,w) |_{k,S} \b{\gamma_i} \b{\rho}.
\]

If we write $\gamma_i$ and $\rho$ for the Siegel part of $\b{\gamma}_i$ and $\b{\rho}$ respectively, we see that the growth of the series $E^{2n}_{k-l/2}(\tau,s - l/4)|_k \gamma_i \rho$, for each $\gamma_i$, is bounded by the growth of the series
\[
H_{k-l/2}(z,s) = \det(y)^{s-k/2} \sum  \det(cz + d)^{-2s+l/2},
\] 
where the sum is over all coprime matrices $c,d \in M_{2n}(\mathbb{Z})$. This growth of this series was studied by Sturm in \cite{St}. Strictly speaking Sturm considers integral weight series, namely the case of $k-l/2 \in \mathbb{Z}$, however since we are actually interested in the absolute value $|H_{k-l/2}(z,s)|$  his result is valid also for the case of $k-l/2 \in \frac{1}{2}\mathbb{Z}$, by using the fact, see \cite[Theorem A.2.4 (2)]{Sh00}, that the absolute value of the half-integral factor of automorphy is equal to the square root of the absolute value of integral one, i.e $|h(\gamma,z)| = |j(\gamma,z)|^{1/2}$, in the notation of \cite{Sh00}. For more on how the results of Sturm can be extended to the case of half-integral Siegel modular forms we refer to \cite{Mercuri}. Furthermore we note that the statements in \cite{St} are valid for any congruence subgroup and hence in particular valid for the groups $\rho^{-1} \gamma^{-1}_i \Gamma' \gamma_i \rho$. 

We can now use Lemma \ref{Bounded growth} above to establish the range where the Eisenstein series is of bounded growth. For this we need to modify slightly the range obtained by Sturm in \cite[Corollary 2]{St} to accommodate for the small difference in the exponent of the determinant observed in the condition of bounded growth in the proof of Lemma \ref{Bounded growth}. Following the proof of \cite[Corrolary 2]{St} we see that the bounds are
\[
-2n - \frac{l}{2} > s-\frac{k}{2} > \frac{(2n+1) - (k-l/2)}{2},
\]
or equivalently
\[
\frac{k-l}{2} -2n > s > \frac{2n+1 + l/2}{2}.
\]
\end{proof}

\begin{rem} \label{extension to totally real} In order to extend the results of this section to the case of totally real field, one needs to extend \cite[Corollary 2]{St} to the case of totally real fields. We have not attempted doing this.
\end{rem}

\begin{lem}\label{lem:G} Let 
$$G(z) := \pi^{-\beta}\Lambda_{k-l/2,\mathfrak{c}}^{2n}(\mu/2-l/4,\chi \psi_S) (E^{2n}|_{k,S} \b{\rho}) (z,\mu/2; \chi)$$ 
and assume that  $\frac{k-l}{2} -2n > \frac{\mu}{2} > \frac{2n+1 + l/2}{2}$, $k - \mu \in 2 \mathbb{Z}$, and $\beta$ is as in Theorem \ref{algebraic properties of Eisenstein series}. Then $G \in \mathcal{B}^{2n}_{k,S} \cap N^{2n,D}_{k,S}(\overline{\mathbb{Q}})$, where $D = 2n\left(\frac{k-\mu}{2}\right)$. Moreover, the Fourier coefficients of $G$ are of the form $det(y)^{-D/2n} P(y)$, where $P \in \overline{\mathbb{Q}}[y_{ij}]$ is a polynomial of degree at most $D$.
\end{lem}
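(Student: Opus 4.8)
The plan is to read each of the three assertions about $G$---bounded growth, near holomorphy over $\overline{\mathbb{Q}}$, and the precise shape of the Fourier coefficients---off results already in place. For bounded growth, observe that the hypothesis $\frac{k-l}{2}-2n > \frac{\mu}{2} > \frac{2n+1+l/2}{2}$ is exactly the range appearing in Proposition \ref{Eisenstein growth}, so $(E^{2n}|_{k,S}\b{\rho})(z,\mu/2;\chi) \in \mathcal{B}^{2n}_{k,S}$. The normalizing factor $\pi^{-\beta}\Lambda_{k-l/2,\mathfrak{c}}^{2n}(\mu/2-l/4,\chi\psi_S)$ is a finite scalar---its argument avoids the poles of the $L$-factors since $\mu > 2n+1+l/2$---and multiplication by a scalar preserves membership in $\mathcal{B}^{2n}_{k,S}$, so $G \in \mathcal{B}^{2n}_{k,S}$.

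For near holomorphy and rationality I would invoke Theorem \ref{algebraic properties of Eisenstein series} in degree $2n$ at the point $\mu$. Using $\mu < k-l-4n$ and $k-\mu \in 2\mathbb{Z}$ one checks that conditions (i), (ii) there translate into $2n+1-(k-l/2) \le \mu-l/2 \le k-l/2$ and $\mu - k \in 2\mathbb{Z}$, and that none of the excluded cases occurs: $\mu > 2n+1+l/2$ disposes of (a), (b), (c), while $F=\mathbb{Q}$ has class number one, disposing of (d). Since $\mu - l/2 - \frac{2n+1}{2} > 0$, the ``otherwise'' value of the degree collapses to $r = nk - n\mu = n(k-\mu) = D$, whence $\Lambda_{k-l/2,\mathfrak{c}}^{2n}(\mu/2-l/4,\chi\psi_S)E^{2n}(z,\mu/2;\chi) \in \pi^{\beta}N^{2n,D}_{k,S}(\overline{\mathbb{Q}})$. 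Slashing by $\b{\rho}=1_H\rho$, $\rho \in Sp_{2n}(\mathbb{Q})$, preserves both the degree of near holomorphy and the field of definition (Section \ref{sec:nhol}), so dividing by $\pi^{\beta}$ gives $G \in N^{2n,D}_{k,S}(\overline{\mathbb{Q}})$.

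The substantive point is the precise form of the Fourier coefficients, for which the generic near-holomorphic bound (a denominator $\det(y)^{-D}$) is too crude; instead I would exploit the explicit shape of the Siegel Eisenstein series. Via the identity $E^{2n}(z,s;\chi) = \sum_i E^{2n}_{k-l/2}(\tau,s-l/4)\,\theta(\tau,w)\,|_{k,S}\b{\gamma_i}$ from the proof of Proposition \ref{Eisenstein growth}---or, equivalently, via the theta decomposition $G = \sum_h E_h(\tau)\,\Theta_{2S,\Lambda_2,h}(\tau,w)$ of Theorem \ref{Structure of nearly holomorphic Siegel-Jacobi}, with $E_h \in N^{2n,D}_{k-l/2}(\overline{\mathbb{Q}})$---the problem reduces to the purely Siegel factor. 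At the critical value $s_0 = \mu/2 - l/4$, Shimura's explicit Fourier expansion of $E^{2n}_{k-l/2}$ (whose confluent hypergeometric coefficients degenerate to polynomials there) exhibits each coefficient as $\det(y)^{-(\kappa/2-s_0)}$ times a polynomial over $\overline{\mathbb{Q}}$, where $\kappa = k-l/2$ and the exponent $\kappa/2 - s_0 = (k-\mu)/2 = D/2n =: m$ is exactly the holomorphy defect. Multiplying by the holomorphic theta series leaves $\det(y)^{-m}$ intact and only alters the polynomial, while the bound $\deg P \le D$ is forced by near holomorphy, since $\det(y)^{-m}y^{\alpha}$ is a genuine polynomial in $y^{-1}$ only when $|\alpha| \le 2nm = D$.

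The main obstacle is to carry this single-power structure through the slash operators $|_{k,S}\b{\gamma_i}$ and $|_{k,S}\b{\rho}$, because slashing entangles the automorphy factor $j(g,\tau)^{-\kappa}$ with the transformed argument $\Im(g\tau)$ and could a priori spoil the single denominator. The resolution I would pursue is that $m = \kappa/2 - s_0$ is a slash-invariant numerical datum---the gap between the weight and twice the spectral parameter, neither of which the $Sp_{2n}$-action changes---so that each $E_h$, being a $\overline{\mathbb{Q}}$-combination of slashes of $E^{2n}_{k-l/2}(\tau,s_0)$, inherits the same denominator $\det(y)^{-m}$; here Shimura's machinery applies directly in the Siegel setting. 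Reassembling the sums over $h$ (and $i$) then yields the asserted $\det(y)^{-D/2n}P(y)$ with $P \in \overline{\mathbb{Q}}[y_{ij}]$ of degree at most $D$, which is precisely the hypothesis needed to feed $G$ into Corollary \ref{improved bound projection}.
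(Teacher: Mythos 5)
Your reductions coincide with the paper's: bounded growth is read off Proposition \ref{Eisenstein growth} (the scalar normalization being harmless), near holomorphy and $\overline{\mathbb{Q}}$-rationality come from Theorem \ref{algebraic properties of Eisenstein series} (your verification of conditions (i)--(ii), the exclusion of (a)--(d), and the computation $r = n(k-\mu) = D$ are all correct), and the refined coefficient claim is reduced, exactly as in the paper, to the Siegel factors $E^{2n}_{k-l/2}(\tau,\mu/2-l/4)\,|_{k-l/2}\,\gamma_i\rho$ via the decomposition $G = \sum_i \bigl(E^{2n}_{k-l/2}\,\theta\bigr)|_{k,S}\,\b{\gamma}_i\b{\rho}$. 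Where you genuinely diverge is the mechanism for the single-denominator structure and, above all, for its stability under slashing. The paper does not argue through degenerate confluent hypergeometric expansions: it writes the Eisenstein series at the special value as a Shimura--Maass derivative of a \emph{holomorphic} Eisenstein series, $E^{2n}_{k-l/2}(\tau,\mu/2-l/4) = \delta^{(k-\mu)/2}_{\mu-l/2}E^{2n}_{\mu-l/2}(\tau)$, quotes the fact (Courtieu--Panchishkin, p.~106) that Fourier coefficients of $\delta^{r}_{m}(\hbox{holomorphic})$ have exactly the shape $\det(y)^{-r}P(y)$ with $\deg P \le 2nr$, and then uses the equivariance $\delta^{r}_{m}(f|_{m}\gamma) = (\delta^{r}_{m} f)|_{m+2r}\,\gamma$ to commute the slash past the operator: the slashed series is again $\delta^{(k-\mu)/2}_{\mu-l/2}$ of a holomorphic form, so the coefficient structure is automatic.

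This equivariance is precisely the point your proposal leaves open. Declaring that $m = \kappa/2 - s_0$ is ``a slash-invariant numerical datum'' is not by itself a proof that the single denominator survives slashing: near holomorphy of degree $D$ only guarantees coefficients that are polynomials in the entries of $y^{-1}$, and slashing a form whose coefficients happen to have the shape $\det(y)^{-m}P(y)$ does not visibly preserve that shape --- this is exactly the obstacle you yourself identify. What makes the idea correct is either (a) the paper's Maass-operator commutation, or (b) the observation that $E^{2n}_{k-l/2}(\cdot,s_0)|_{k-l/2}\,\gamma$ is the expansion of the same Eisenstein series at another cusp, to which Shimura's confluent hypergeometric analysis, with the same degeneration at $s_0$, applies verbatim. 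You gesture at (b) (``Shimura's machinery applies directly''), and the argument can be completed along those lines, but as written the crucial step is an assertion rather than an argument; the paper's route (a) is the clean and fully referenced way to close it, and it is worth knowing since it costs one line once the identity $E^{2n}_{k-l/2}(\tau,s_0)=\delta^{(k-\mu)/2}_{\mu-l/2}E^{2n}_{\mu-l/2}(\tau)$ is invoked.
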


\begin{proof} The first statement of the Lemma 
is already contained in Proposition \ref{Eisenstein growth} and Theorem \ref{algebraic properties of Eisenstein series}. 
The new information is the nature of the Fourier coefficients of $G$, which a priory are the polynomials (with algebraic coefficients and of degree at most $D$) in the entries of $y^{-1}$. We have seen in the above proposition that up to a constant the Eisenstein series $G(z)$ is of the form 
\[
G(z) =  \sum_{i=1}^m \left(E_{k-l/2}^{2n}(\tau,\mu/2-l/4) \theta (\tau,w)\right) |_{k,S} \b{\gamma_i} \b{\rho}.
\] 
In particular, it is enough to establish the statement for the nearly holomorphic Siegel modular forms $E_{k-l/2}^{2n}(\tau,\mu/2-l/4)|_{k-l/2} \gamma_i \rho$, where $\gamma_i$ and $\rho$ denote the Siegel part of $\b{\gamma}_i$ and $\b{\rho}$, respectively. It is well known that the Eisenstein series $E_{k-l/2}^{2n}(\tau,\mu/2-l/4)$ are of the form $\delta^{\frac{k-\mu}{2}}_{\mu - l/2} E^{2n}_{\mu-l/2}(\tau)$, where $E^{2n}_{\mu-l/2}(\tau)$ are holomorphic and $\delta^{\frac{k-\mu}{2}}_{\mu - l/2}$ are the Shimura-Maass differential operators (see for example \cite[page 135]{CP} for $l \in 2 \mathbb{Z}$ and \cite[pages 145-146]{Sh00} for any $l \in \mathbb{Z}$). Moreover, if a nearly holomorphic modular form is obtained by such operators from a holomorphic modular form, then the coefficients of its Fourier expansion are of the form $\det(y)^{-(k-\mu)/2} P(y)$ where $P$ is a polynomial in the entries of $y$ of degree at most $2n(k-\mu)/2$ (see \cite[page 106]{CP}). Moreover, the operators $\delta^r_m$ have the property (see \cite[page 116]{CP} for $m \in \mathbb{Z}$ and \cite[Paragraph 14.14]{Sh00} for $m \in\Z+ \frac{1}{2}$) that $\delta^r_m(f |_{m} \gamma) = (\delta_m^r f)|_{m+2r} \gamma$ for a Siegel modular form $f$ of weight $m$ and any $\gamma \in \Sp_{2n}(\mathbb{R})$. In particular, 
\begin{align*}
E_{k-l/2}^{2n}(\tau,\mu/2-l/4)|_{k-l/2} \gamma_i \rho &= \left(\delta^{\frac{k-\mu}{2}}_{\mu - l/2} E^{2n}_{\mu-l/2}(\tau) \right)|_{k - l/2}\gamma_i \rho\\
&= \delta^{\frac{k-\mu}{2}}_{\mu - l/2} \left(E^{2n}_{\mu-l/2}(\tau) |_{\mu - l/2}\gamma_i \rho \right), 
\end{align*}
which proves that the Fourier coefficients of $E_{k-l/2}^{2n}(\tau,\mu/2-l/4)|_{k-l/2} \gamma_i \rho$ are of the required form. 
\end{proof}

The next Lemma shows that the pullback $\Delta^*$ defined in Lemma \ref{diagonal restriction preserves algebraicity} preserves the bounded growth property.

\begin{lem} \label{diagonal bounded} Let $f \in \mathcal{B}^{2n}_{k,S}$ and let $g(z_1,z_2) := \Delta^*(f)$. Then $g$ is of bounded growth in both variables $z_1$ and $z_2$. If moreover $f \in N^{2n,r}_{k,S}$ for some $r \in \mathbb{N}$, then we may write $g(z_1,z_2) = \sum_{i=1}^m g_i(z_1) h_i(z_2)$, where $g_i,h_i \in \mathcal{B}^n_{k,S} \cap N^{n,r}_{k,S}$.
\end{lem}
\begin{proof} We first prove that the growth condition is satisfied for $g(z_1,z_2)$ both for $z_1$ and $z_2$. Since the argument is symmetric we may keep $z_2$ fixed and consider the function $g(z_1) := g(z_1,z_2)$. Let $\b{\Gamma}'$ be the congruence subgroup for $g(z_1)$ and $\lagp$ the  associated constant. We need to show that for any $(t,r)$ with $t \in \frac12 Sym_n(\Z)$ half-integral and $r \in M_{l,n}(\Z)$ such that $4t - \lagp S^{-1}[r] >0$ we have
\[
\int_{Y_n}\int_{M_{l,n}(\R)}\int_{X_n}\int_{U_n} |g(z_1)|e^{-2\pi \tr(t y_1/\lagp)}e^{-2\pi\tr (\T{r}v_1)}\Delta_{S,k}(z_1)d_nz_1 < \infty.
\] 
Since $f$ if of bounded growth, we already know that for any given $(t_0,r_0)$, now of degree $2n$, such that $4t_0 - \lagp S^{-1}[r_0] > 0$ we have 
$$\int_{Y_{2n}}\int_{M_{l,2n}(\R)}\int_{X_{2n}}\int_{U_{2n}} |f(z)|e^{-2\pi \tr(t_0y/\lagp)}e^{-2\pi\tr (\T{r_0}v)}\Delta_{S,k}(z)d_{2n}z < \infty .$$
We note here that since $g$ is simply the pull back of $f$ with respect to the diagonal map $\Delta$ we can use the same $\lagp$.

We specialize to $t_0 := \diag[t ,\,  1_n]$ and $r_0 = [r \,\,\, 0_{l,n}]$ with $t, r$ as above, so that $4t_0 - \lagp S^{-1}[r_0] > 0$. Moreover, in the second integral we decompose
$X_{2n} = \Delta(X_n) \cup \Delta(X_n)^c$, where $\Delta(X_n) = \diag[X_n,\,X_n]$ and $\Delta(X_n)^c$ is its complement in $X_{2n}$; and do the same with $Y_{2n}$. With these choices the above integral is equal to
\begin{equation}\tag{$\star$}\label{eq:lembg}
\begin{split}
\int_{\Delta(Y_{2n)}}\int_{M_{l,2n}(\R)}\int_{\Delta(X_{n})}\int_{U_{2n}} |g(z_1,z_2)|e^{-2\pi \tr(t y_1/\lagp + y_2/\lagp)}e^{-2\pi\tr (\T{r}v_1)}\Delta_{S,k}(z)d_{2n}z\, +\\
\int_{\Delta(Y_{2n})^c}\int_{M_{l,2n}(\R)}\int_{\Delta(X_{n})^c}\int_{U_{2n}} |f(z)|e^{-2\pi \tr(t_0y/\lagp)}e^{-2\pi\tr (\T{r_0}v)}\Delta_{S,k}(z)d_{2n}z < \infty . 
\end{split}\end{equation}

Now consider the function 
$$h(z_2) := \int_{Y_{n}}\int_{M_{l,n}(\R)}\int_{X_{n}}\int_{U_{n}} |g(z_1,z_2)|e^{-2\pi \tr(t y_1/\lagp)}e^{-2\pi\tr (\T{r}v_1)}\Delta_{S,k}(z_1)d_{n}z_1 .$$ 
It is continuous in $z_2$, since $g$ is continuous in the variable $z_2$.  

Suppose that for the selected $(t,r)$ there exists a $z'_2$ such that $g(z_1,z'_2)$ is not of bounded growth in $z_1$, that is, 
\[
h(z'_2) = \int_{Y_n}\int_{M_{l,n}(\R)}\int_{X_n}\int_{U_n} |g(z_1,z'_2)|e^{-2\pi \tr(t y_1/\lagp)}e^{-2\pi\tr (\T{r}v_1)}\Delta_{S,k}(z_1)d_nz_1 = \infty.
\]
Since $h$ is continuous, for every $N > 0$ there exists a neighborhood of $z'_2$, say $D(z_2',\delta)$ such that $|h(z_2)| > N$ for all $z_2 \in D(z_2',\delta)$. This means that the function $h$ is unbounded in any neighborhood of $z_2'$. We will show that this further implies that the first integral in \eqref{eq:lembg}, call it $I$, is infinite.

Choose a compact neighborhood $D$ of $z'_2$ and denote by $M$ be the smallest value of the continuous function $e^{-2\pi \tr( y_2)/\lagp}$ on $D$. Then 
\begin{align*}
&I \geq 
\int_{\Delta(Y_{2n)}}\int_{M_{l,2n}(\R)}\int_{\Delta(X_{n})}\int_{U_{2n}} |g(z_1,z_2)|  \mathbf{1}_D(z_2) e^{-2\pi \tr(t y_1/\lagp + y_2/\lagp)}\\
&\hspace{10.3cm}\cdot e^{-2\pi\tr (\T{r}v_1)}\Delta_{S,k}(z)d_{2n}z \\
&\geq M \int_{\Delta(Y_{2n)}}\int_{M_{l,2n}(\R)}\int_{\Delta(X_{n})}\int_{U_{2n}} |g(z_1,z_2)|  \mathbf{1}_D(z_2) e^{-2\pi \tr(t y_1/\lagp)}e^{-2\pi\tr (\T{r}v_1)}\Delta_{S,k}(z)d_{2n}z\\
&=M \int_{Y_n}\int_{M_{l,n}(\R)}\int_{X_n}\int_{U_n} h(z_2) \mathbf{1}_D(z_2) \Delta_{S,k}(z_2)d_{n}z_2,
\end{align*}
where $\mathbf{1}_D(z_2)$ is the indicator function of the set $D$. Now let $K$ be the smallest value of the continuous function $\Delta_{S,k}(z_2)\det(y_2)^{-(l+n+1)}$ on $D$. Then the last integral is larger or equal to
\[
M K \int_D h(z_2) dx_2dy_2dv_2 du_2 .
\]

Since $h$ is unbounded on $D$, the last integral is undefined and hence so is $I$ (note that all the integrands are positive valued), which is in contradiction with \eqref{eq:lembg}. Since $(t,r)$ were selected arbitrarily, we conclude that this holds for any pair $(t,r)$ and hence $g(z_1,z_2)$ is of bounded growth in $z_1$ for all $z_2$. A symmetric argument establishes also the bounded growth in the $z_2$ variable. 

For the last statement of the lemma we use an argument similar to the proof \cite[Lemma 24.11]{Sh00}. We first observe that if $f$ is additionally nearly holomorphic, then $g$ is both of bounded growth and nearly holomorphic in $z_1$ and $z_2$. Since the space of nearly holomorphic Siegel-Jacobi forms (of a given level) is of finite dimension (Corollary \ref{cor:N_finite_dim}), so is the linear subspace of the nearly holomorphic functions of bounded growth. If $\{ g_i:i=1,\ldots ,m\}$ forms a basis of such functions, then for any fixed $z_2$ we may write $g(z_1,z_2) = \sum_{i=1}^m g_i(z_1) h_i(z_2)$ for some complex numbers $h_i(z_2)$. We can find enough values of $z_1$, say $a_1,\ldots,a_m$ such that the linear system $g(a_j, z_2) = \sum_{i=1}^m g_i(a_j) h_i(z_2)$, $j=1,\ldots, m$, with the unknowns $h_i(z_2)$ is non-singular, and hence solving for the $h_i(z_2)$ we see that they have the same properties (of being bounded and nearly holomorphic) as the $g(a_j,z_2)$.  
\end{proof}

We now let $\omega$ be a CM point of $\mathbb{H}_{n}$ in the sense of \cite[Section 9]{Sh00}, $ v \in M_{l,2n}(\mathbb{Q})$ and $\mathfrak{P}:= \mathfrak{P}_k(\omega) \in \mathbb{C}^{\times}$ be the corresponding period of $\omega$ of weight $k$, defined for example in \cite[Paragraph 11.12]{Sh00}. We set $\b{\omega} := (\omega, v \Omega_{\omega})$ where $\Omega_{\omega} := \transpose{(\omega \,\,\,1_n)}$ and consider $G_*(\b{\omega},z_2):= G_*( (\omega, v \Omega_{\omega}),z_2)$, where $G_*(z_1,z_2) = e(\tr(S[w_1] (\tau_1 - \bar{\tau}_1)^{-1})) G(z_1,z_2)$, $G(z_1,z_2) := \Delta^*(G)$ and $G$ is as in Lemma \ref{lem:G}; this is in agreement with the notation of Proposition \ref{Hecke Operators preserve field of definition}. 

\begin{lem}  \label{Special Fourier Coefficients} 
The Fourier coefficients of the function
$$g_{\omega,v}(z_2) := \frac{1}{\mathfrak{P}} G_*(\b{\omega},z_2) \in \mathcal{B}^n_{k,S} \cap N_{k,S}^{n,D}(\overline{\mathbb{Q}})$$
are of the form $\det(y_2)^{-D/2n}P(y_2)$, where  $D = 2n\left(\frac{k-\mu}{2}\right)$ and $P(y_2) \in \overline{\mathbb{Q}}[y_{2,ij}]$.
\end{lem}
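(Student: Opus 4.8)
The plan is to reduce everything to Shimura's theorem on the algebraicity of CM-values of nearly holomorphic \emph{Siegel} modular forms, transporting it through the $*$-operation and the specialization $w=v\Omega_\tau$ supplied by Proposition \ref{Hecke Operators preserve field of definition}. First I would record the structure of $G(z_1,z_2):=\Delta^*(G)$. By Lemma \ref{lem:G} we have $G\in\mathcal{B}^{2n}_{k,S}\cap N^{2n,D}_{k,S}(\overline{\mathbb{Q}})$ with $D=2n\tfrac{k-\mu}{2}$ and Fourier coefficients of the shape $\det(Y)^{-D/2n}P(Y)=\det(Y)^{-(k-\mu)/2}P(Y)$. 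Combining Lemma \ref{diagonal restriction preserves algebraicity} with Lemma \ref{diagonal bounded}, the pullback can be written as a finite sum
\[
G(z_1,z_2)=\sum_i g_i(z_1)\,h_i(z_2),\qquad g_i,h_i\in\mathcal{B}^n_{k,S}\cap N^{n,D}_{k,S}(\overline{\mathbb{Q}}).
\]
Since the prefactor $e(\tr(S[w_1](\tau_1-\bar\tau_1)^{-1}))$ defining $G_*$ depends on $z_1$ only, this yields $G_*(z_1,z_2)=\sum_i (g_i)_*(z_1)\,h_i(z_2)$, where $(g_i)_*$ is the $*$-transform in the first variable.

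Next I would evaluate the $z_1$-variable at the CM point. For each $i$ the map $\tau_1\mapsto (g_i)_*(\tau_1,v\Omega_{\tau_1})$ is a nearly holomorphic Siegel modular form of \emph{integral} weight $k$ with algebraic Fourier coefficients; this is the nearly holomorphic analogue of Proposition \ref{Hecke Operators preserve field of definition}(1) recorded just before Theorem \ref{Structure of nearly holomorphic Siegel-Jacobi}. Shimura's theorem on CM-values (see \cite[\S\S11, 14]{Sh00}) then gives $(g_i)_*(\omega,v\Omega_\omega)=\mathfrak{P}_k(\omega)\,\alpha_i$ with $\alpha_i\in\overline{\mathbb{Q}}$, so that
\[
g_{\omega,v}(z_2)=\frac{1}{\mathfrak{P}}G_*(\b{\omega},z_2)=\sum_i\alpha_i\,h_i(z_2)
\]
is a finite $\overline{\mathbb{Q}}$-linear combination of elements of $\mathcal{B}^n_{k,S}\cap N^{n,D}_{k,S}(\overline{\mathbb{Q}})$. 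As both $\mathcal{B}^n_{k,S}$ (closed under finite linear combinations by the triangle inequality on the defining integrals) and $N^{n,D}_{k,S}(\overline{\mathbb{Q}})$ (a $\overline{\mathbb{Q}}$-vector space) are stable under such combinations, this already establishes $g_{\omega,v}\in\mathcal{B}^n_{k,S}\cap N^{n,D}_{k,S}(\overline{\mathbb{Q}})$. Modularity and holomorphy in $w_2$ are inherited from $G$, since the $*$-factor is constant once $z_1=\b{\omega}$ is fixed.

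To pin down the shape of the Fourier coefficients I would use the restriction $\tau=\diag[\tau_1,\tau_2]$, under which $Y=\diag[y_1,y_2]$ and hence $\det(Y)^{-(k-\mu)/2}=\det(y_1)^{-(k-\mu)/2}\det(y_2)^{-(k-\mu)/2}$, while $P(Y)$ becomes a polynomial in the entries of $y_1$ and $y_2$. Thus, for fixed $z_1$, the $z_2$-Fourier coefficient of $G_*(z_1,z_2)$ at a mode $(t_2,r_2)$ equals $\det(y_2)^{-(k-\mu)/2}$ times a polynomial in $y_2$ whose coefficients are functions of $z_1$; each such coefficient is, up to the $*$-factor, the value of a nearly holomorphic Siegel-Jacobi form in $z_1$, so after specializing to $\b{\omega}$ and dividing by $\mathfrak{P}$ it becomes algebraic by the same CM-value theorem. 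Since $D/2n=(k-\mu)/2$, this shows the $z_2$-Fourier coefficients of $g_{\omega,v}$ are of the form $\det(y_2)^{-D/2n}P(y_2)$ with $P\in\overline{\mathbb{Q}}[y_{2,ij}]$, as claimed.

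The only genuinely non-formal ingredient is the CM-value theorem invoked above; the remainder is bookkeeping on Lemmas \ref{lem:G}, \ref{diagonal restriction preserves algebraicity} and \ref{diagonal bounded} (finite-dimensionality being Corollary \ref{cor:N_finite_dim}). The main point to get right is the legitimacy of the reduction to \emph{Siegel} CM-values: one must verify that $\tau_1\mapsto(g_i)_*(\tau_1,v\Omega_{\tau_1})$ is indeed an arithmetic nearly holomorphic Siegel form of integral weight $k$, with the single period $\mathfrak{P}_k(\omega)$ serving simultaneously for every $i$ — precisely the role the specialization $w_1=v\Omega_{\tau_1}$ together with Proposition \ref{Hecke Operators preserve field of definition} is designed to play. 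This finer coefficient shape is exactly the hypothesis required to later apply Corollary \ref{improved bound projection}.
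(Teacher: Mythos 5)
Your first half is exactly the paper's argument: using Lemmas \ref{lem:G}, \ref{diagonal restriction preserves algebraicity} and \ref{diagonal bounded} to write $G(z_1,z_2)=\sum_i g_i(z_1)h_i(z_2)$ with $g_i,h_i\in\mathcal{B}^n_{k,S}\cap N^{n,D}_{k,S}(\overline{\mathbb{Q}})$, pulling the $*$-factor through (it depends on $z_1$ only), specializing $w_1=v\Omega_{\tau_1}$, and applying Shimura's CM-value theorem to the arithmetic nearly holomorphic \emph{Siegel} forms $(g_i)_*$ to conclude $g_{\omega,v}=\sum_i\alpha_i h_i\in\mathcal{B}^n_{k,S}\cap N^{n,D}_{k,S}(\overline{\mathbb{Q}})$. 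The gap lies in your final step, where you pin down the shape of the Fourier coefficients. You assert that, after restricting to the diagonal, the coefficient of each monomial in $y_2$ (a function of $z_1$) ``is, up to the $*$-factor, the value of a nearly holomorphic Siegel--Jacobi form in $z_1$'' and then apply the CM-value theorem to each such function separately. This is not justified: these coefficient functions are extracted from an infinite sum over the modes $(t_1,r_1)$ of the first variable, and before Shimura's theorem can be invoked one would have to prove that each of them is (i) modular in $z_1$ (this needs the commutation of the $z_2$-Fourier extraction with the $z_1$-slash action together with the linear independence of the monomials in $y_2$), (ii) nearly holomorphic of controlled degree, and (iii) defined over $\overline{\mathbb{Q}}$ --- and even then one must pass to a Siegel form via $w_1=v\Omega_{\tau_1}$ before citing \cite[Paragraph 14.4]{Sh00}. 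Point (iii) is essentially the statement you are trying to prove, so as written this step is a genuine missing argument, not bookkeeping.

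The fix is that you do not need this step at all: your own first half already yields the conclusion, and this is precisely how the paper closes the proof. Since $g_{\omega,v}\in N^{n,D}_{k,S}(\overline{\mathbb{Q}})$, its Fourier coefficients are, via the theta decomposition defining this space, polynomials in the entries of $y_2^{-1}$ with coefficients in $\overline{\mathbb{Q}}$. On the other hand, your diagonal-restriction computation shows that these same coefficients equal
\[
\mathfrak{P}^{-1}\,e\!\left(\tr\!\left(S[v\Omega_{\omega}](\omega-\bar{\omega})^{-1}\right)\right)\det(\Im(\omega))^{-D/2n}\det(y_2)^{-D/2n}A_{t_2,r_2}(y_2),
\]
with $A_{t_2,r_2}\in\mathbb{C}[y_{2,ij}]$ a priori only complex (each $A_{t_2,r_2}$ being a convergent infinite sum of polynomials of degree at most $D$, hence itself a polynomial of degree at most $D$). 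Two rational functions of $y_2$ that agree on the open cone $Y_n$ coincide identically, so the comparison forces the displayed polynomial factor to lie in $\overline{\mathbb{Q}}[y_{2,ij}]$, which is the assertion of the lemma. Replacing your term-by-term CM argument with this one-line comparison makes the proof complete, with no further appeal to CM values.
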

\begin{proof} By Lemmas \ref{lem:G} and \ref{diagonal bounded}, $G(z_1,z_2) = \sum_i \b{f}_i(z_1) \b{g}_i(z_2)$ for some $\b{f}_i, \b{g}_i \in \mathcal{B}^n_{k,S} \cap N^{n,D}_{k,S}(\overline{\mathbb{Q}})$. Hence,
\[
g_{\omega,v}(z_2) = \frac{1}{\mathfrak{P}}  G_*(\b{\omega},z_2) = \frac{1}{\mathfrak{P}}  \sum_i \b{f}_{i*}(\b{\omega}) \b{g}_i(z_2)\in \mathcal{B}^n_{k,S}\, .
\]

As we have seen in section \ref{sec:nhol}, $\b{f}_{i*} \in N_{k}^{n,D}(\overline{\mathbb{Q}})$, and thus by \cite[Paragraph 14.4]{Sh00}, $\mathfrak{P}^{-1}   \b{f}_{i*}(\b{\omega}) \in \overline{\mathbb{Q}}$, which in turn implies that $g_{\omega,v} \in N^{n,D}_{k,S}(\overline{\mathbb{Q}})$.

On the other hand, $G(z_1,z_2)$ is the diagonal restriction of a series with Fourier expansion of the form $\det(y)^{-D/2n} \sum_{t,r}Q_{t,r}(y) e(\tr\(\frac{t}{\lagp} \tau + \T{r}w\))$, where $Q_{t,r} \in \overline{\mathbb{Q}}[y_{ij}]$ are polynomials of degree at most $D$ and $\lagp$ is the constant determined by the level of $G$. Hence,
\begin{align*}
g_{\omega,v}(z_2) =&\frac{1}{\mathfrak{P}}e(\tr(S[v\Omega_{\omega}] (\omega - \bar{\omega})^{-1})) \det(\Im(\omega))^{-D/2n} \det(y_2)^{-D/2n} \\
&\cdot\sum_{t_2,r_2} A_{t_2,r_2}(y_2)e(\tr\(\frac{t_2}{\lagp}\tau_2 + \T{r_2}w_2\))
\end{align*}
where $A_{t_2,r_2}(y_2) = \sum_{t_1,r_1}Q_{\diag[t_1, t_2], [r_1\, r_2]}(\diag[\Im(\omega), y_2])  e(\tr\(\frac{t_1}{\lagp}\omega + \T{r_1} v \Omega_\omega\))$. Observe that $A_{t_2,r_2}$ is a polynomial in $\mathbb{C}[y_{2,ij}]$: it can be written as a sum of infinitely many polynomials, but all of them are of degree at most $D$. However, since $g_{\omega,v}\in N^{n,D}_{k,S}(\overline{\mathbb{Q}})$, it follows that in fact $\mathfrak{P}^{-1} e(\tr(S[v\Omega_{\omega}] (\omega - \bar{\omega})^{-1}))\det(\Im(\omega))^{-D/2n} A_{t_2,r_2}(y_2) \in \overline{\mathbb{Q}}[y_{2,ij}]$. In particular, the Fourier coefficients of $g_{\omega,v}(z_2)$ are of the form $\det(y_2)^{-D/2n}P(y_2)$, where $P\in \overline{\mathbb{Q}}[y_{2,ij}]$.
\end{proof}

We can now establish the analogue of Theorem \ref{ratio of inner products} without assuming Property A. Its proof is different from the one of Theorem \ref{ratio of inner products}, since instead of using Fourier expansion to characterize Siegel-Jacobi forms, we use their values at CM points. This allows us to take the lower bound for $k$ as small as possible.
The main issue with the first approach is that if one writes, with the notation above, $G(z_1,z_2) = \sum_i \b{f}_i(z_1) \b{g}_i(z_2)$ then we do not know whether the $\b{f}_i,\b{g}_i \in N^{n,D}_{k,S}$ have Fourier coefficients of the form $\det(y)^{-D/2n} P(y)$, which would allow us to take a smaller bound for $k$ when we apply the holomorphic projection (cf. Corollaries \ref{Hol preserves algebraicity} and \ref{improved bound projection}). We note here that a similar idea (evaluation at CM points) was used in \cite[section 29]{Sh00} on results of non-splitting unitary groups to compensate for the lack of Fourier expansion in the classical sense.

\begin{thm} \label{ratio of inner products_v2}
Assume $n >1$, and let $0 \neq \b{f} \in S_{k,S}^{n}(\b{\Gamma},\overline{\mathbb{Q}})$ be an eigenfunction of $T(\mathfrak{a})$ for all integral ideals $\mathfrak{a}$ with $(\f{a},\f{c})=1$. Assume that $k > 6n+2l + 1$ and that the matrix $S$ satisfies the condition $M_{\f{p}}^+$ for every prime ideal $\f{p}$ of $\Q$ coprime to $\f{c}$.
Then for any $\b{g} \in S_{k,S}^n(\overline{\mathbb{Q}})$,
\[
\frac{<\b{f},\b{g}>}{ <\b{f}, \b{f}>} \in \overline{\mathbb{Q}}.
\]
\end{thm}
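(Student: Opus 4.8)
The plan is to reproduce the doubling argument of Theorem \ref{ratio of inner products}, but to replace the step which compared Fourier coefficients (and thereby needed Property A through the operator $\f{q}$) by an evaluation at CM points, feeding the output into the holomorphic projection $Hol$ of Section \ref{sec:Sturm}. As a first reduction, since the Hecke operators $T(\f{a})$ are normal and preserve $S^n_{k,S}(\b{\Gamma},\overline{\Q})$ (Proposition \ref{Hecke Operators preserve field of definition}), we split $S^n_{k,S}(\b{\Gamma},\overline{\Q}) = V(\b{f}) \oplus \b{U}$ orthogonally and over $\overline{\Q}$; as $\b{f} \perp \b{U}$ we may replace $\b{g}$ by its $V(\b{f})$-component and assume $\b{g} \in V(\b{f})$.

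Next I would fix a Hecke character $\chi$ with $\chi_\a = \sgn_\a^k$, $\chi^2 \ne 1$, conductor different from $\f{o}$ and $G(\chi,\mu-n-l/2) \in \overline{\Q}^\times$, together with an integer $\mu \equiv k \pmod 2$ with $2n+l+1 < \mu < k-l-4n$, a nonempty range precisely because $k > 6n+2l+1$. For this $\mu$, Lemma \ref{lem:G} shows that $G := \pi^{-\beta}\Lambda^{2n}_{k-l/2,\f{c}}(\mu/2-l/4,\chi\psi_S)\,(E^{2n}|_{k,S}\b{\rho})(\,\cdot\,,\mu/2;\chi)$ lies in $\mathcal{B}^{2n}_{k,S} \cap N^{2n,D}_{k,S}(\overline{\Q})$, where $D = 2n(k-\mu)/2$, and has Fourier coefficients of the shape $\det(y)^{-D/2n}P(y)$. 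Plugging $G$ into the doubling identity \eqref{eq:dm_identity} for an arbitrary $\tilde{\b{f}} \in V(\b{f})$ (whose standard $L$-function coincides with that of $\tilde{\b{f}}^c$ by Proposition \ref{Behaviour under complex conjugation}, so that $\b{\Lambda}(\mu/2,\tilde{\b{f}}^c,\chi)=\b{\Lambda}(\mu/2,\b{f},\chi)$), and absorbing the explicit factors $vol(A)$, $c_{S,k}(\mu/2-k/2)$ and $G(\chi,\mu-n-l/2)$ into powers of $\pi$ and fixed algebraic numbers exactly as in the proof of Theorem \ref{ratio of inner products}, I obtain, with $\b{w} := \tilde{\b{f}}|_{k,S}\b{\eta}_n^{-1} \in S^n_{k,S}(\overline{\Q})$ and an explicit integer $c_1$,
\[
< \Delta^*(G)(z_1,z_2),\, \b{w}(z_2) > = {^{\overline{\Q}}}^{\times}\, \pi^{c_1}\, \b{\Lambda}(\mu/2,\b{f},\chi)\, \tilde{\b{f}}^c(z_1),
\]
the pairing being taken in $z_2$. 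Multiplying by the $z_1$-factor $e(\tr(S[w_1](\tau_1-\bar\tau_1)^{-1}))$, specialising $z_1 = \b{\omega} := (\omega, v\Omega_\omega)$ at a CM point $\omega \in \mathbb{H}_n$ and $v \in M_{l,2n}(\Q)$, using Lemma \ref{Special Fourier Coefficients} to identify the resulting form in $z_2$ with $\mathfrak{P}\,g_{\omega,v}$, and dividing by the period $\mathfrak{P}$, I arrive at
\[
< g_{\omega,v},\, \b{w} > = {^{\overline{\Q}}}^{\times}\, \pi^{c_1}\, \b{\Lambda}(\mu/2,\b{f},\chi)\, \frac{(\tilde{\b{f}}^c)_*(\b{\omega})}{\mathfrak{P}}.
\]

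Now both sides are to be made algebraic up to the fixed transcendental factor $\pi^{c_1}\b{\Lambda}(\mu/2,\b{f},\chi)$. On the right, $(\tilde{\b{f}}^c)_*(\tau, v\Omega_\tau)$ is a holomorphic arithmetic Siegel modular form by Proposition \ref{Hecke Operators preserve field of definition}, so $\mathfrak{P}^{-1}(\tilde{\b{f}}^c)_*(\b{\omega}) \in \overline{\Q}$ by Shimura's theory of CM values. On the left, $g_{\omega,v} \in \mathcal{B}^n_{k,S} \cap N^{n,D}_{k,S}(\overline{\Q})$ has Fourier coefficients of the form $\det(y_2)^{-m}Q(y_2)$ with $m = (k-\mu)/2$ and $Q \in \overline{\Q}[y_{2,ij}]$; since $\mu > 2n+l+1$ trivially forces $k > n+l/2+m$, Corollary \ref{improved bound projection} gives $Hol(g_{\omega,v}) \in S^n_{k,S}(\overline{\Q})$, while Theorem \ref{prop:projection} gives $< g_{\omega,v},\b{w} > = < Hol(g_{\omega,v}),\b{w} >$. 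Setting $\b{h}_{\omega,v} := Hol(g_{\omega,v})|_{k,S}\b{\eta}_n \in S^n_{k,S}(\overline{\Q})$ and invoking invariance of the Petersson product, the last display becomes
\[
< \b{h}_{\omega,v},\, \tilde{\b{f}} > = {^{\overline{\Q}}}^{\times}\, \pi^{c_1}\, \b{\Lambda}(\mu/2,\b{f},\chi)\, \frac{(\tilde{\b{f}}^c)_*(\b{\omega})}{\mathfrak{P}},
\]
where $\b{h}_{\omega,v}$ is independent of $\tilde{\b{f}}$ and the algebraic unit is independent of both $\tilde{\b{f}}$ and $(\omega,v)$.

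Finally I would run the spanning argument. Because $\mu > 2n+l+1$ makes $\b{\Lambda}(\mu/2,\b{f},\chi) \ne 0$ by \eqref{eq:non-vanishing of L-values}, any $\tilde{\b{f}}_1 \in V(\b{f})$ orthogonal to all $\b{h}_{\omega,v}$ satisfies $(\tilde{\b{f}}_1^c)_*(\b{\omega}) = 0$ for every CM point $\omega$ and every $v \in M_{l,2n}(\Q)$; as CM points are dense in $\mathbb{H}_n$ and $(\tilde{\b{f}}_1^c)_*(\tau, v\Omega_\tau)$ is holomorphic in $\tau$, it vanishes identically for each $v$, whence $\tilde{\b{f}}_1^c = 0$ by Proposition \ref{Hecke Operators preserve field of definition}, i.e. $\tilde{\b{f}}_1 = 0$. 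Thus the $V(\b{f})$-projections of the $\b{h}_{\omega,v}$ span $V(\b{f})$ over $\overline{\Q}$, so writing any $\b{g} \in V(\b{f})$ as a $\overline{\Q}$-combination of them yields $< \b{g},\b{f} > \in \pi^{c_1}\b{\Lambda}(\mu/2,\b{f},\chi)\,\overline{\Q}$; taking $\b{g} = \b{f}$ (where the product is a nonzero real, so the algebraic factor is a unit) and dividing, the common transcendental factor cancels and $\frac{<\b{f},\b{g}>}{<\b{f},\b{f}>} \in \overline{\Q}$, using that complex conjugation preserves $\overline{\Q}$ and $<\b{f},\b{f}> \in \R$. \textbf{The main difficulty} is exactly this CM-point step: one must secure that the restrictions $(\tilde{\b{f}}^c)_*(\tau, v\Omega_\tau)$ are genuine holomorphic arithmetic Siegel forms, so that Shimura's CM-period result and the density of CM points apply, and that a single transcendental factor $\pi^{c_1}\b{\Lambda}(\mu/2,\b{f},\chi)$ is shared by numerator and denominator; the bounded-growth and near-holomorphy inputs required for $Hol$ are supplied beforehand by Lemmas \ref{lem:G}, \ref{diagonal bounded} and \ref{Special Fourier Coefficients}.
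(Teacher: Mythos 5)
Your proposal is correct and follows essentially the same route as the paper's own proof: the same orthogonal reduction to $V(\b{f})$, the same choice of $\chi$ and of $\mu \equiv k \pmod 2$, the doubling identity fed through Lemma \ref{lem:G} and Lemma \ref{diagonal bounded}, evaluation at CM points normalized by the period $\mathfrak{P}$ via Lemma \ref{Special Fourier Coefficients}, holomorphic projection through Corollary \ref{improved bound projection}, and the spanning argument resting on non-vanishing \eqref{eq:non-vanishing of L-values} and the density of CM points in $\mathbb{H}_n$. Your only deviations are harmless sharpenings of bookkeeping: the lower bound $\mu > 2n+l+1$ is exactly what the non-vanishing step needs (the paper's stated range only guarantees $\mu > 2n+1+l/2$), and your check $k > n + l/2 + (k-\mu)/2$ uses the correct determinant exponent $m = (k-\mu)/2$ for the improved projection bound.
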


\begin{proof} As we argued in the proof of Theorem \ref{ratio of inner products}, thanks to the decomposition
\[
S_{k,S}^{n}(\b{\Gamma},\overline{\Q}) = V(\b{f}) \oplus \b{U},
\]
where $\b{U}$ is a $\overline{\mathbb{Q}}$-vector space orthogonal to $V(\b{f})$, we may assume, without loss of generality, that $\b{g} \in V(\b{f})$. We now select $\mu \in k+2 \mathbb{Z}$ such that
\[
\frac{k-l}{2} -2n > \frac{\mu}{2} > \frac{2n+1 + l/2}{2}.
\] 

As in the proof of Theorem \ref{ratio of inner products} we consider a character $\chi$ of conductor $\f{f}_{\chi} \neq \mathfrak{o}$ such that $\chi_{\a}(x) = \sgn_{\a}(x)^{k}$, $\chi^2 \neq 1$ and $G(\chi,\mu-n-l/2) \in \overline{\mathbb{Q}}^{\times}$, where 
$G(\chi,\mu-n-l/2)$ is as in Theorem \ref{thm:dm_identity}, equation \eqref{definition of $G$}. Then for any $\tilde{\b{f}} \in V(\b{f})$ we can evaluate the doubling identity of Theorem \ref{thm:dm_identity} at $s=\mu/2$ to obtain
\begin{multline*}
\Lambda_{k-l/2,\mathfrak{c}}^{2n}(\mu/2-l/4,\chi \psi_S)  vol(A)  <(E|_{k,S} \b{\rho}) (\diag[z_1,z_2],\mu/2;\chi),(\tilde{\b{f}}^c|_{k,S}\b{\eta}_n)^c(z_2)>\\
={^{\overline{\mathbb{Q}}}}^{\times} c_{S,k}(\mu/2-k/2) \b{\Lambda}(\mu/2,\b{f},\chi) \tilde{\b{f}}^c(z_1), 
\end{multline*}
where we recall that $\tilde{\b{f}}^c(z_1)$ is the Jacobi form obtained from $\tilde{\b{f}}(z_1)$ by taking complex conjugation on the Fourier coefficients.

In particular, for
\[
G(z_1,z_2) = \pi^{-\beta}\Lambda_{k-l/2,\mathfrak{c}}^{2n}(\mu/2-l/4,\chi \psi_S) (E|_{k,S} \b{\rho}) (\diag[z_1,z_2],\mu/2; \chi),
\] 
the diagonal restriction of $G(z)$ defined in Lemma \ref{lem:G}, that is, with $\beta$ as in Theorem \ref{algebraic properties of Eisenstein series}, we obtain
\[
 <G(z_1,z_2), \b{F}(z_2) > ={^{\overline{\mathbb{Q}}}}^{\times} \pi^{\delta- d_0 +\beta} \b{\Lambda}(\mu/2,\b{f},\chi) \tilde{\b{f}}^c(z_1),
\]
where $\b{F}:= (\tilde{\b{f}}^c|_{k,S}\b{\eta}_n)^c=\tilde{\b{f}}|_{k,S}\b{\eta}^{-1}_n\in S_{k,S}^n(\overline{\mathbb{Q}})$. Then, for a CM point $\omega \in \mathbb{H}_n$ and $v \in M_{l,2n}(\mathbb{Q})$, with the notation as in Lemma \ref{Special Fourier Coefficients},
\[
 <G_*(\b{\omega},z_2), \b{F}(z_2) > ={^{\overline{\mathbb{Q}}}}^{\times} \pi^{\delta- d_0 +\beta} \b{\Lambda}(\mu/2,\b{f},\chi) \tilde{\b{f}}_*^c(\b{\omega}),
\]
or equivalently
\[
<g_{\omega,v}(z_2), \b{F}(z_2) > ={^{\overline{\mathbb{Q}}}}^{\times} \pi^{\delta- d_0 +\beta} \b{\Lambda}(\mu/2,\b{f},\chi) \mathfrak{P}_k(\omega)^{-1}\tilde{\b{f}}_*^c(\b{\omega}),
\]
where $\mathfrak{P}_k(\omega) \in \mathbb{C}^{\times}$ is the CM period corresponding to $\omega$. By the lemma above, $g_{\omega,v}\in N^{n,D}_{k,S}(\overline{\Q})$ with $D= n(k-\mu)$. We note that $k> n + l/2 + (k-\mu)$  since we have selected the $\mu$ such that $\mu > 2n+ l/2 +1$. In particular, we can employ Lemma \ref{Special Fourier Coefficients} and Corollary \ref{improved bound projection}, and consider  $\b{h}_{\omega,v}(z_2) := Hol(g_{\omega,v}(z_2)) \in S^n_{k,S}(\overline{\mathbb{Q}})$ to obtain
\begin{equation}\tag{$\ast$}\label{eq:alg}
<\b{h}_{\omega,v}(z_2),\b{F}(z_2) > ={^{\overline{\mathbb{Q}}}}^{\times} \pi^{\delta- d_0 +\beta} \b{\Lambda}(\mu/2,\b{f},\chi) \mathfrak{P}_k(\omega)^{-1}\tilde{\b{f}}_*^c(\omega).
\end{equation} 

Since $\tilde{\b{f}} \in V(\b{f})$ was arbitrary, the forms $\widetilde{\b{h}}_{\omega,v} := \b{h}_{\omega,v}|_{k,S}\b{\eta}_n \in S_{k,S}^n(\overline{\mathbb{Q}})$  (or rather their projections to $V(\b{f})$) for the various $(\omega,v)$ span the space $V(\b{f})$ over $\overline{\mathbb{Q}}$. Indeed, if we denote by $\mathcal{S} \subset V(\b{f})$ the $\overline{\mathbb{Q}}$ vector space spanned by the projections of $\widetilde{\b{h}}_{\omega,v}$ to $V(\b{f})$ and if there exists an $\tilde{\b{f}} \in V(\b{f})$ which is not in $\mathcal{S}$ then there is a form, say $\tilde{\b{f}_1} \in V(\b{f})$ which is orthogonal to $\mathcal{S}$. But then this would imply by \eqref{eq:alg} above that 
$\tilde{\b{f}}_{1*}^c(\omega) = 0$ for all $(\omega,v)$, since $\b{\Lambda}(\mu/2,\b{f},\chi) \neq 0$ for $\mu > 2n + l +1$ by the statement \eqref{eq:non-vanishing of L-values}. In particular, since the CM points are dense in $\mathbb{H}_n$ (see \cite[page 77]{Sh00}), we conclude that $\tilde{\b{f}}_{1*}=0$ and thus so is $\tilde{\b{f}_1} = 0$. Hence, indeed, $\widetilde{\b{h}}_{\omega,v}$ span $V(\b{f})$.
Moreover, for any $(\omega,v)$,
\[
<\widetilde{\b{h}}_{\omega,v},\tilde{\b{f}}> \in \pi^{\delta- d_0 +\beta} \b{\Lambda}(\mu/2,\b{f},\chi) \overline{\mathbb{Q}}.
\]  
This means that for any $\b{g} \in V(\b{f})$, and taking $\tilde{\b{f}}$ above to be equal to $\b{f}$, we obtain $<\b{g},\b{f}> \in \pi^{\delta- d_0 +\beta} \b{\Lambda}(\mu/2,\b{f},\chi) \overline{\mathbb{Q}}$. In particular, the same holds for $\b{g} = \b{f}$, and that concludes the proof by observing that of course $<\b{f},\b{f}> \neq 0$.
\end{proof}

We are now ready to give the proof of Theorem \ref{Main Theorem on algebraicity v2}.

\begin{proof}[Proof of Theorem \ref{Main Theorem on algebraicity v2}] We follow the same steps as in the proof of Theorem \ref{ratio of inner products_v2}, but now we take $\tilde{\b{f}}$ to be our $\b{f}$ and evaluate the doubling identity of Theorem \ref{thm:dm_identity} at $s = \sigma/2$. . Note also that the restrictions on $\sigma$ in conditions $(i)$ and $(ii)$ are the ones that make the corresponding Eisenstein series nearly holomorphic and of bounded growth. For every CM point $\omega$ of $\mathbb{H}_{n}$ and $v \in M_{l,2n}(\mathbb{Q})$ such that $\b{f}_*^c(\b{\omega})=\b{f}_*^c((\omega, v \Omega_{\omega})) \neq 0$ we obtain as in \eqref{eq:alg} above,
\[
<\b{h}_{\omega,v}(z_2), \b{f}(z_2)>={^{\overline{\mathbb{Q}}}}^{\times} \pi^{\delta- d_0 +\beta} \b{\Lambda}(\sigma/2,\b{f},\chi)
\]
for some $\b{h}_{\omega,v} \in \b{S}_{k,S}^n(\overline{\mathbb{Q}})$, where we have used the fact that $\mathfrak{P}_k(\omega)^{-1}\tilde{\b{f}}_*^c(\b{\omega}) \in \overline{\mathbb{Q}}^{\times}$. 
The exact power of $\pi$ can be derived in exactly the same way as in Theorem \ref{Main Theorem on algebraicity}. Hence,
thanks to Theorem \ref{ratio of inner products_v2}, dividing the above equality by $<\b{f},\b{f}>$ finishes the proof.
\end{proof}



\begin{thebibliography}{15}
\bibitem{Ar94} T.~Arakawa, Jacobi Eisenstein Series and a Basis Problem for Jacobi Forms, {\em Comment. Math. Univ. St. Paul.} 43 (1994), 
no. 2, 181-216.
%
\bibitem{BMana} Th. Bouganis, J. Marzec, On the analytic properties of the standard $L$-function attached to Siegel-Jacobi modular forms, {\em Doc. Math.} 24, 2613-2684, (2019).
%
\bibitem{BMarXiv} Th. Bouganis, J. Marzec, On the standard $L$-function attached to Siegel-Jacobi modular forms of higher index, available from https://arxiv.org/abs/1706.07287
 %
\bibitem{Boylan} H. Boylan, Jacobi forms, finite quadratic modules and {W}eil
representations over number fields. With a foreword by Nils-Peter Skoruppa. Lecture Notes in Mathematics, 2130. {\em Springer}, {\em Cham}, 2015. xx+130 pp. ISBN: 978-3-319-12915-0; 978-3-319-12916-7 
%
\bibitem{Bri} K. Bringmann, Applications of Poincar\'e series on Jacobi groups (Doctoral dissertation, University of Heidelberg, Germany). DOI: 10.11588/heidok.00004939
%
\bibitem{CP} M. Courtieu, A. Panchishkin, Non-archimedean $L$-functions and arithmetical Siegel modular forms. Second edition. Lecture Notes in Mathematics, 1471. Springer-Verlag, Berlin, 2004. viii+196 pp. ISBN: 3-540-40729-4
%
\bibitem{Deligne} P. Deligne, Valeurs de fonctions $L$ et p\'{e}riodes d'int\'{e}grales. {\em Proc. Symp. Pure Math.} AMS 33 (1979), 313-346.
%
\bibitem{EZ} M. Eichler, D. Zagier, The theory of Jacobi forms, Progress in Mathematics, volume 55, Springer, 1985.
%
\bibitem{Heim} B. Heim, Analytic Jacobi Eisenstein series and the Shimura method, {\em Math. Kyoto Univ.} 43 (2003), no. 3, 451-464.
%
\bibitem{Kl} H. Klingen, Introductory lectures on Siegel modular forms, Cambridge Studies in Advanced Mathematics, 20. Cambridge University Press, Cambridge, 1990. x+162 pp. ISBN: 0-521-35052-2
%
\bibitem{Kramer} J. Kramer, An arithmetic theory of Jacobi forms in higher dimensions, {\em J. reine angew. Math.} 458 (1995), 157-182.
%
\bibitem{Mercuri} S. Mercuri, Algebraicity of metaplectic $L$-functions, https://arxiv.org/abs/1811.09155
%
\bibitem{Milne}J. Milne, Canonical Models of (Mixed) Shimura Varieties and Automorphic Vector Bundles, {\em Automorphic forms, Shimura varieties, and L-functions}, Vol. I (Ann Arbor, MI, 1988),
283-414, Perspect. Math., 10, Academic Press, Boston, MA, 1990.
%
\bibitem{Mu89} A.~Murase, $L$-functions attached to Jacobi forms of degree $n$ Part I. The basic identity, {\em J. reine angew. Math.} 401 (1989), 122-156.
%
\bibitem{Mu91} A.~Murase, $L$-functions attached to Jacobi forms of degree $n$ Part II. Functional Equation, {\em Math. Ann.} 290 (1991), 247-276. 
%
\bibitem{Scholl} A. J. Scholl, Remarks on special values of L-functions, {\em L-functions and Arithmetic, Proceedings of the Durham Symposium}, July, 1989 (J. Coates and M. J. Taylor, eds.), London Mathematics Society Lecture Notes Series, vol. 153, Cambridge University Press, 1991, 373-392.
%
\bibitem{Sh78} G. Shimura, On certain reciprocity-laws for theta functions and modular forms, {\em Acta math.} 141 (1978), 35-71.
%
\bibitem{Sh94b} G. Shimura, Differential operators, holomorphic projections and singular modular forms, {\em Duke Math. J.}, vol 76, no 1, 1994  
%
\bibitem{Sh96} G. Shimura, Euler Products and Eisenstein Series, Conference Board of the Mathematical Sciences (CBMS), Number 93, AMS, 1996.
%
\bibitem{Sh00} G. Shimura, Arithmeticity in the Theory of Automorphic Forms, Mathematical Surveys and Monographs, AMS, vol 82, 2000.
%
\bibitem{SZ} N.-P. Skoruppa, D. Zagier, Jacobi forms and a certain space of modular forms, {\em Invent. Math.} 94, 113-146 (1988)
%
\bibitem{St} J. Sturm, The critical values of zeta functions associated to the symplectic group, {\em Duke Math. J.}, vol 48, no 2, 1981
%
\bibitem{ZWL} H. Zhou, T. Wang, H. Lu, On dimension formulas for Jacobi forms, {\em Math. Z.} 264 (2010), 709-725.
%
\bibitem{Z89} C.~Ziegler, Jacobi Forms of Higher Degree, {\em Abh. Math. Sem. Univ. Hamburg} 59 (1989), 191-224.
\end{thebibliography}
\end{document}